\theoremstyle{plain}
\newtheorem{Theorem}{Thm}[section]
\newtheorem{Thm}[Theorem]{Theorem}
\newtheorem{Lem}[Theorem]{Lemma}
\newtheorem{Cor}[Theorem]{Corollary}
\newtheorem{Prop}[Theorem]{Proposition}
\newtheorem*{Thm*}{Theorem}
\newtheorem*{Lem*}{Lemma}
\newtheorem*{Rem*}{Remark}
\theoremstyle{definition}
\newtheorem{Def}[Theorem]{Definition}
\newtheorem{Exm}[Theorem]{Example}
\newtheorem{Rem}[Theorem]{Remark}
\newcommand{\C}{\mathbb{C}}
\newcommand{\N}{\mathbb{N}}
\renewcommand\P{\mathbb{P}}
\newcommand{\R}{\mathbb{R}}
\newcommand\scp[1]{\langle #1\rangle}
\newcommand{\cB}{\mathcal{B}}
\newcommand{\cF}{\mathcal{F}}
\newcommand{\cK}{\mathcal{K}}
\newcommand{\cV}{\mathcal{V}}
\newcommand\wh{\widehat}
\newcommand{\ii}{\operatorname{i}}
\newcommand{\id}{\mathds{1}}
\newcommand{\reg}{\mathrm{reg}}
\DeclareMathOperator\aff{aff}
\DeclareMathOperator\tr{tr}
\DeclareMathOperator\cl{clos}
\DeclareMathOperator\co{cc}
\DeclareMathOperator\cv{conv}
\DeclareMathOperator\hyper{h}
\setlist[enumerate]{%
% vertical
topsep=.5ex plus0.5ex minus0.2ex,  %additional space above the list
parsep=.5ex plus0.3ex minus0.1ex,  %space between paragraphs of an item: parsep
itemsep=.5ex plus0.5ex minus0.2ex, %space between items: itemsep+parsep
% horizontal
leftmargin=4ex,    %space to the left of the text in second, third, etc. lines
itemindent=0ex,    %indentation of first line against second line
labelwidth=2ex,    %width of label box, * is standard width depending on font
labelsep=1ex,      %space between label box and text in first line
listparindent=0ex, %indentation of first line of paragraph within an item
rightmargin=0ex,   %space to the right of the text
align=left,        %label alignment, default is right
%labelindent=1ex,  %space to the left of the label box (specific to the enumitem package)
%
font=\rmfamily,
label=\arabic*)}
\begin{document}
\title[Kippenhahn's Theorem for Joint Numerical Ranges]{Kippenhahn's Theorem for 
Joint Numerical Ranges and Quantum States}
\author{Daniel Plaumann}
\author{Rainer Sinn}
\author{Stephan Weis}
\begin{abstract}
Kippenhahn's Theorem asserts that the numerical range of a matrix is the convex 
hull of a certain algebraic curve. Here, we show that the joint numerical range 
of finitely many Hermitian matrices is similarly the convex hull of a 
semi-algebraic set. We discuss an analogous statement regarding the dual convex 
cone to a hyperbolicity cone and prove that the class of bases of these 
dual cones is closed under linear operations. The result offers a new geometric 
method to analyze quantum states.
\end{abstract}
\date{June 19th, 2020}
\subjclass[2010]{47A12, 14P10, 52A20, 81P99}
\keywords{Joint numerical range, dual hyperbolicity cone, algebraic boundary}
%
% 14 Algebraic geometry
%14N05 Projective techniques
%14P10 Semialgebraic sets and related spaces
%14Q05 Curves
%14Q15 Higher-dimensional varieties
%
% 47 Operator theory
%47A12 Numerical range, numerical radius
%47L07 Convex sets and cones of operators
%
% 52 Convex and discrete geometry
%52A10 Convex sets in $2$ dimensions
%52A15 Convex sets in $3$ dimensions
%52A20 Convex sets in $n$ dimensions
%
% 81 Quantum theory
%81P99 None of the above, but in this section
%
\maketitle
%
%%%%%%%%%%%%%%%%%%%%%%%%%%%%%%%%%%%%%%%%%%%%%%%%%%%%%%%%%%%%%%%%%%%%%%%%%%%%
%%%%%%%%%%%%%%%%%%%%%%%%%%%%%%%%%%%%%%%%%%%%%%%%%%%%%%%%%%%%%%%%%%%%%%%%%%%%
%%%%%%%%%%%%%%%%%%%%%%%%%%%%%%%%%%%%%%%%%%%%%%%%%%%%%%%%%%%%%%%%%%%%%%%%%%%%
%%%%%%%%%%%%%%%%%%%%%%%%%%%%%%%%%%%%%%%%%%%%%%%%%%%%%%%%%%%%%%%%%%%%%%%%%%%%
%%%%%%%%%%%%%%%%%%%%%%%%%%%%%%%%%%%%%%%%%%%%%%%%%%%%%%%%%%%%%%%%%%%%%%%%%%%%
%
\section{Introduction}
\label{sec:intro}
Let $H_d$ be the real vector space of Hermitian $d\times d$ matrices. We denote 
the set of {\em density matrices} by
\begin{equation}\label{eq:density-matrices}
\cB =\cB_d
=\{\rho\in H_d \colon \rho\succeq 0, \tr(\rho)=1\}, 
\end{equation}
where $A\succeq 0$ means that $A\in H_d$ is positive semi-definite. 
The letter $\cB$ underlines that the set $\cB$ is a base of the cone of 
positive semi-definite matrices (see \Cref{sec:duality-selfdual}). We use 
the bilinear form $\langle A,B\rangle=\tr(AB)$, $A,B\in H_d$, to identify $H_d$ 
and its dual space. Let $A_1,A_2,\dots,A_n\in H_d$, $n\in\N$, and define 
\begin{equation}\label{eq:jnr}
W = W_{A_1,A_2,\ldots,A_n} =
\{(\langle \rho, A_1\rangle,\dots,\langle \rho, A_n\rangle) \colon
\rho\in\cB_d\},
\end{equation}
a convex compact subset of the dual space $(\R^n)^\ast$ to $\R^n$. The set $W$ 
has been called {\em joint numerical range} in operator theory, see Section~2 of 
\cite{BonsallDuncan1971} (also joint algebraic numerical range \cite{Mueller2010}). 
\par
Our motivation for this paper is matrix theory and quantum mechanics. Physicists 
call density matrices {\em quantum states}, as density matrices describe the physical 
states of a quantum system
including all statistical properties \cite{Holevo2011}. 
Hence, our results contribute to the geometry of quantum states 
\cite{AubrunSzarek2017,BengtssonZyczkowski2017}. 
\Cref{sec:qm} presents numerical range methods in quantum mechanics.
\par
Perhaps more commonly, the term joint numerical range refers to 
\begin{equation}\label{eq:jnr-pure}
W^\sim = W^\sim_{A_1,A_2,\ldots,A_n} =
\{(\langle\psi|A_1\psi\rangle,\dots,\langle\psi|A_n\psi\rangle) \colon
\psi\in\C^d, \langle \psi|\psi\rangle=1\},
\end{equation}
where $\langle\varphi|\psi\rangle$ is the standard inner product of 
$\varphi,\psi\in\C^d$, see \cite{ChienNakazato2010,LiPoon2000}. 
A {\em pure state} is a projection $\rho$ onto the span of a unit vector $\psi\in\C^d$. Since $\langle\rho,A\rangle=\langle\psi|A\psi\rangle$ holds for all $A\in H_d$, the set $W^\sim$ is a linear image of the set of pure states. As the pure states are the extreme points of the set of density matrices $\cB$, the joint numerical range $W$ is the convex hull of $W^\sim$.
\par
Thus, $W=W^\sim$ holds when $W^\sim$ is convex. This is the case if 
\mbox{$n=2$} by the Toeplitz-Hausdorff theorem \cite{Hausdorff1919,Toeplitz1918}, 
whose 100th anniversary we celebrated at the time of writing. Note that 
$W_{A_1,A_2}^\sim$ is the standard {\em numerical range} 
$W(A) = \{\langle\psi|A\psi\rangle : \psi\in\C^d, \langle \psi|\psi\rangle=1\}
\subset\C$ of $A=A_1+\ii A_2$. Also, $W^\sim$ is convex if $n=3$ and 
$d\geq 3$ \cite{Au-YeungPoon1979}.
The convexity of $W^\sim$ is an open problem for $n>3$, see 
\cite{Gutkin2004,LiPoon2000}. Numerical ranges are generally nonconvex 
if the complex field is replaced with the skew field of the quaternions
\cite[p.~39]{Rodman2014}.
\par
Algebraic geometry has been employed to study numerical ranges since the 1930s. We 
consider the determinant
\[
p=\det( x_0 \id + x_1 A_1 + \cdots + x_n A_n )
\in\R[x_0,x_1,\dots,x_n],
\]
where $\id$ denotes the $d\times d$ identity matrix, and the complex projective 
hypersurface
\begin{equation*}
\cV(p)=
\{x\in\P^n \mid p(x) = 0 \}.
\end{equation*}
If $n=2$, then $\cV(p)\subset\P^2$ is an algebraic curve. Murnaghan 
\cite{Murnaghan1932} showed that the eigenvalues of the matrix $A_1+\ii A_2$ are the 
foci of the curve
\begin{equation*}%\label{eq:kippenhahn-curve}
T=\{ y_1+\ii y_2 \mid y_1,y_2\in\R,(1:y_1:y_2)\in \cV(p)^\ast\}\subset\C,
\end{equation*}
where $X^\ast\subset(\mathbb{P}^n)^\ast$ denotes the dual variety parametrizing 
hyperplanes tangent to a variety $X\subset\mathbb{P}^n$ (cf.~\Cref{sec:duality-cones} 
for more details). Kippenhahn recognized the meaning of the convex hull of the curve 
$T$.
\par
\begin{Thm}[Kippenhahn \cite{Kippenhahn1951}]\label{thm:k}
The numerical range of $A_1+\ii A_2$ is the convex hull of the curve $T$, 
in other words, $W(A_1+\ii A_2)=\cv(T)$.
\end{Thm}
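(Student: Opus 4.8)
The plan is to match $W:=W(A_1+\ii A_2)$ with $\cv(T)$ by comparing their supporting lines. By the Toeplitz--Hausdorff theorem, $W=W^\sim_{A_1,A_2}=\{(\langle\psi|A_1\psi\rangle,\langle\psi|A_2\psi\rangle):\psi\in\C^d,\ \|\psi\|=1\}$ is a compact convex planar set; under the identification $\R^2\cong\C$ its support function in the direction $u_\theta=(\cos\theta,\sin\theta)$ is $h_W(\theta)=\max_{\|\psi\|=1}\langle\psi|A_\theta\psi\rangle=\lambda_{\max}(A_\theta)$, where $A_\theta=\cos\theta\,A_1+\sin\theta\,A_2\in H_d$. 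For any $\theta$ for which $\lambda_{\max}(A_\theta)$ is a simple eigenvalue, with unit eigenvector $v=v_\theta$, the point $c_\theta:=\langle v|A_1v\rangle+\ii\,\langle v|A_2v\rangle\in W$ lies on the supporting line of $W$ in direction $u_\theta$. Assuming we are in the generic case in which such directions $\theta$ are dense, continuity of support functions yields $W=\overline{\cv(\{c_\theta\})}$, so the theorem reduces to the inclusions (a) $c_\theta\in T$ for all such $\theta$, and (b) $T\subseteq W$.

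The computational core of both (a) and (b) is the Gauss map of $\cV(p)$ written through the adjugate. With $M(x)=x_0\id+x_1A_1+x_2A_2$ one has $\partial p/\partial x_i=\tr\!\big(\operatorname{adj}(M(x))\,\partial_{x_i}M(x)\big)$, where $\partial_{x_0}M=\id$, $\partial_{x_1}M=A_1$, $\partial_{x_2}M=A_2$. If $x^\ast\in\cV(p)$ is a real point at which $M(x^\ast)$ has rank $d-1$, with kernel $\C v$ and $\|v\|=1$, then $M(x^\ast)$ is Hermitian of rank $d-1$, so $\operatorname{adj}(M(x^\ast))$ is a nonzero real multiple of $vv^\ast$; hence $\nabla p(x^\ast)$ is a nonzero multiple of $(1,\langle v|A_1v\rangle,\langle v|A_2v\rangle)$ and the tangent line of $\cV(p)$ at $x^\ast$ is the point $(1:\langle v|A_1v\rangle:\langle v|A_2v\rangle)$ of $\cV(p)^\ast$, i.e.\ the point $\langle v|A_1v\rangle+\ii\langle v|A_2v\rangle$ of $T$. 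Applying this to $x^\ast=(-\lambda_{\max}(A_\theta):\cos\theta:\sin\theta)$, which is a smooth point of $\cV(p)$ exactly when $\lambda_{\max}(A_\theta)$ is simple, gives (a). For (b), note that a generic real point of $\cV(p)^\ast$ in the affine chart $x_0\neq0$ is a smooth point of the dual curve, hence has a unique point of tangency with $\cV(p)$, which is defined over $\R$ and therefore real — a real line tangent to $\cV(p)$ only at non-real points would be tangent along a complex-conjugate pair and so give a singular point of $\cV(p)^\ast$; the displayed computation at that real tangency point exhibits the dual point as $\langle v|A_1v\rangle+\ii\langle v|A_2v\rangle\in W^\sim\subseteq W$. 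The remaining, finitely many singular real points of $\cV(p)^\ast$ lie in the closure of these, and $W$ is closed, so $T\subseteq W$ and hence $\cv(T)\subseteq W$. Combining, $\cv(T)\subseteq W=\overline{\cv(\{c_\theta\})}\subseteq\overline{\cv(T)}$, and since $T$ is bounded and compact this gives $W=\cv(T)$.

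The main difficulty is escaping the degenerate configurations where the eigenvector dictionary fails: directions $\theta$ at which $\lambda_{\max}(A_\theta)$ has multiplicity $\geq2$ (producing flat faces of $\partial W$ and singular points of $\cV(p)$), the extreme case in which $\lambda_{\max}(A_\theta)$ is multiple for all $\theta$ so that the $c_\theta$ fail to be dense, real tangent lines touching $\cV(p)$ only at non-real points, and the possibility that $p$ is not squarefree. My way around all of these is a perturbation argument: approximate $A_1,A_2$ by Hermitian matrices $A_1',A_2'$ for which $\cV(p')$ is a smooth plane curve in sufficiently general position — so that $p'$ is squarefree, $\cV(p')^\ast$ is the ordinary dual curve whose real affine points correspond bijectively to real tangent lines at real points, and $\lambda_{\max}(A'_\theta)$ is simple for a dense set of $\theta$ — prove $W(A_1'+\ii A_2')=\cv(T')$ by the argument above, and then let $A_i'\to A_i$, using that $W(\cdot)$ is Hausdorff-continuous in the matrices while the dual variety varies upper-semicontinuously, which transports both inclusions to $A_1,A_2$. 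One must also supply the elementary inputs used above: that $\operatorname{adj}$ of a Hermitian matrix of rank $d-1$ with kernel $\C v$ is a nonzero real multiple of $vv^\ast$, that $\cV(p)$ is smooth at a point where $M$ has rank $d-1$, and that a compact convex planar set is the closed convex hull of any family of boundary points realizing its support function on a dense set of directions.
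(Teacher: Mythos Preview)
Your approach via support functions, eigenvectors, and the adjugate formula is sound for the inclusion $W\subseteq\cv(T)$ and for showing that \emph{smooth} real points of $\cV(p)^\ast$ lie in $W$. The gap is in the step ``the remaining, finitely many singular real points of $\cV(p)^\ast$ lie in the closure of these.'' This is not justified and is in fact false in the presence of isolated real nodes (acnodes) of the dual curve, which arise precisely from real bitangent lines to $\cV(p)$ touching only at a pair of complex-conjugate points --- exactly the configuration you flag as problematic. Your perturbation does not dispose of it: such isolated bitangents are stable under real perturbation (you give no argument that a generic Hermitian determinantal curve lacks them), and even if they were absent for generic $A_1',A_2'$, the real locus of the dual varies only upper-semicontinuously, so an acnode of $T$ need not be a limit of points of $T'$; hence $T'\subseteq W'$ does not pass to $T\subseteq W$ in the limit.

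The paper closes this gap by a direct hyperbolicity argument (\Cref{thm:dim3}, resting on \Cref{lem:multiplicity}). Suppose $\ell$ is a real point of $\cV(p)^\ast$ with $\ell(e)>0$ but $\ell\notin C_e(p)^\vee$. Then the line $\cV(\ell)$ meets the interior of the hyperbolicity cone at some point $y$. By a limiting argument $\cV(\ell)$ is tangent to $\cV(p)$ at some (a priori complex) point $x$; but since $p$ is hyperbolic with respect to $y$ and $x$ lies on the real line through $y$, the point $x$ must be real. \Cref{lem:multiplicity} then says that along a line through a hyperbolicity direction the intersection multiplicity with $\cV(p)$ at $x$ equals the multiplicity of $x$ on $\cV(p)$, so this line cannot be tangent at $x$ --- a contradiction. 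This is the missing idea: hyperbolicity forces the tangency point to be real \emph{and} simultaneously forbids tangency along any line through an interior point, pinning every real dual point (central or not) inside $C_e(p)^\vee$, hence $T\subseteq W$.
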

\Cref{thm:k} is a well-known tool in matrix analysis 
\cite{Bebiano-etal2005,Fiedler1981,GallaySerre2012,JoswigStraub1998}. The curve 
$T\subset\C$ is called the {\em Kippenhahn curve} or the {\em boundary generating curve} 
of the numerical range $W(A_1+\ii A_2)$. The curve $T$ has proven useful in 
classifications of numerical ranges of $3$-by-$3$ matrices 
\cite{Keeler-etal1997,Kippenhahn1951} and $4$-by-$4$ matrices 
\cite{Camenga-etal2019,ChienNakazato2012}, and it has been studied for special 
matrices \cite{ChienNakazato2018,GauWu2004,Gau-etal2013,Militzer-etal2017}.
\par 
We sketch a proof of \Cref{thm:k} in a manner that may help to explain the 
geometry behind the proof of \Cref{thm:dual-hyp-cone} later on: Consider the 
convex set $S=\{(x_1,x_2)^T\in\R^2\mid \id+ x_1 A_1+x_2 A_2\succeq 0\}$ (a 
{\em spectrahedron}). Assume for simplicity that $S$ is compact, $X=\cV(p)$ is 
smooth, and that the degree $d$ of $p$ is even. The curve $X$ is hyperbolic, i.e.~its real 
points consist of $\frac d2$ nested ovals in the real projective plane. The 
innermost oval is the boundary of $S$. All but finitely many points of the dual 
curve $X^\ast$ correspond to simple tangent lines to $X$. The set of real points of the 
dual curve $X^\ast$ (of degree $d(d-1)$) again consists of $\frac d2$ nested connected components, 
together with at most finitely many isolated real (singular) points. The tangent 
lines to the boundary of $S$ now correspond to the outermost oval of $X^\ast$, since all other tangent lines to $X$ do not pass through $S$ (see \Cref{Cor:multiplicity}). 
The outermost oval therefore bounds the convex dual $S^\circ$ of $S$, which is exactly 
the numerical range $W(A_1+\ii A_2)$. The claim of Kippenhahn's theorem follows 
if we can show that none of the isolated real singularities of $X^\ast$ lie outside 
of $W$; see \Cref{thm:dim3}.
\par
Chien and Nakazato \cite{ChienNakazato2010} provided a more rigorous proof of 
\Cref{thm:k} compared to Kippenhahn's. They also found a triple of Hermitian 
$3\times 3$-matrices for which the literal analogue of \Cref{thm:k} fails in dimension 
$n=3$. We will see that the last part of the above sketch in the plane case, the 
position of singular points of the dual curve, is exactly what causes the failure of 
the theorem in higher dimensions. This can also be seen in the counterexample by Chien 
and Nakazato, \Cref{ex:ChienNakazato}. 
\par
By removing all singular points from the projective dual variety $\cV(p)^\ast$ and taking 
Euclidean closure, we will prove that a modified version of \Cref{thm:k} is valid in all dimensions.
Let $X_1,\dots,X_r$ denote the irreducible components of the hypersurface $\cV(p)$. 
We consider the set $(X_i^\ast)_\reg$ of the regular points of the dual variety 
$X_i^\ast$, the set
\begin{equation*}%\label{eq:Kippenhahn-variety}
T_i = \left\{ (y_1,\dots,y_n) \in (\R^n)^\ast \mid
(1:y_1:\dots:y_n)\in (X_i^\ast)_\reg \right\},
\qquad
i=1,\dots,r, 
\end{equation*}
and the Euclidean closure $T^\sim=\cl(T_1\cup\cdots\cup T_r)$ of the union 
$T_1\cup\cdots\cup T_r$. Our main result is as follows.
\par
\begin{Thm}\label{eq:modified-K}
The joint numerical range $W$ is the convex hull of $T^\sim$.
\end{Thm}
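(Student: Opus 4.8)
The plan is to prove the two inclusions $W \supseteq \cv(T^\sim)$ and $W \subseteq \cv(T^\sim)$ separately. Since $W$ is compact and convex, the first inclusion reduces to showing $T^\sim \subseteq W$, and since $W = \cl(W)$, it is enough to show $T_i \subseteq W$ for each irreducible component $X_i$ of $\cV(p)$. So the core of the proof is: a point $(1:y_1:\dots:y_n)$ that is a regular point of some dual variety $X_i^\ast$ gives rise to a hyperplane in $(\P^n)^\ast$—equivalently a point in $\P^n$—tangent to $X_i$, and I must argue that the corresponding affine functional $x \mapsto 1 + y_1 x_1 + \dots + y_n x_n$ is a supporting functional for the spectrahedron-like convex body whose polar is $W$. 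Concretely, recall from the sketch that $W$ is the dual convex body $S^\circ$ of the spectrahedron $S = \{x \in \R^n : \id + x_1 A_1 + \dots + x_n A_n \succeq 0\}$ (assuming $S$ is bounded; the unbounded case is handled by a standard reduction, translating by a multiple of the identity or passing to the recession cone as in \Cref{sec:duality-cones}). A point of $T^\sim$ corresponds to a hyperplane that is tangent to the hypersurface $\cV(p)$ at a smooth point of the hypersurface; at such a point the Hermitian matrix $\id + \sum x_i A_i$ has a one-dimensional kernel, and the tangency forces the hyperplane to be supporting to $S$ — hence it defines a point of $S^\circ = W$. This is exactly the content of \Cref{Cor:multiplicity} and the outermost-oval discussion, now applied componentwise rather than to the single curve $X$.

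**For the reverse inclusion** $W \subseteq \cv(T^\sim)$, I would argue that every exposed point, or more robustly every point in the relative boundary of $W$, is a convex combination of points of $T^\sim$. Take a supporting hyperplane of $W$ at a boundary point $w$; it is given by a direction $u \in \R^n$, and maximizing $\langle \cdot, u\rangle$ over $W$ corresponds to looking at the largest eigenvalue of $\sum_i u_i A_i$ and its eigenspace, equivalently to a vertex/face of the spectrahedron $S$ in the direction $u$. The key algebraic input is that the face of $W$ exposed by $u$ is itself a joint numerical range of smaller matrices (the compressions of the $A_i$ to the relevant eigenspace), which is a standard fact about numerical ranges; by induction on $d$ (or on $\dim W$) this face is the convex hull of \emph{its} boundary-generating set, and one must check that this smaller boundary-generating set is contained in $T^\sim$ — i.e. that tangency to the smaller hypersurface lifts to tangency (at a point that is regular on the dual variety, after taking closure) to the big one. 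The passage from "generic tangent hyperplane" to "regular point of the dual variety" is precisely where the Euclidean closure $T^\sim = \cl(\bigcup T_i)$ does its work: singular points of $X_i^\ast$ (the isolated real singularities that broke the literal Kippenhahn theorem in the Chien–Nakazato example) are discarded, but they are limits of regular points, and the limiting tangent hyperplanes are still supporting hyperplanes of $W$ by compactness, so nothing on the boundary of $W$ is lost.

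**The main obstacle** I anticipate is the bookkeeping around singularities and reducibility: the clean picture "smooth hyperbolic curve, nested ovals, outermost oval of the dual" from the two-dimensional sketch does not survive verbatim in higher dimensions, where $\cV(p)$ may be singular and reducible, the dual variety $X_i^\ast$ need not be a hypersurface, and the real locus can have lower-dimensional pieces. I would handle this by working with the regular locus $(X_i^\ast)_\reg$ throughout and invoking the fact that the dual variety of an irreducible variety is irreducible together with biduality ($X_i^{\ast\ast} = X_i$), so that a regular point of $X_i^\ast$ corresponds to an honest tangent hyperplane at a point where the tangency is "as simple as possible." The other delicate point is ensuring that the convex-hull operation and the closure operation interact correctly — that is, $\cv(\cl(\bigcup T_i)) = \cv(\bigcup T_i)$ up to closure and both equal $W$; this is routine since $W$ is compact, but it needs to be stated carefully. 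A secondary technical reduction is disposing of the hypothesis "$S$ bounded": if the recession cone of $S$ is nontrivial one replaces the affine slice by the homogeneous picture of \Cref{sec:duality-cones}, proving the cone version \Cref{thm:dual-hyp-cone} and then specializing; I would present the argument in the homogeneous (hyperbolicity-cone) language from the start to avoid this case split, which also makes the appeal to earlier results in the paper cleaner.
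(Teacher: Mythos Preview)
Your argument for $T^\sim\subseteq W$ is close to the paper's, with one omission: a regular real point $\ell$ of $X_i^\ast$ need not define a hyperplane tangent to $X_i$ at a \emph{regular} point of $X_i$, so \Cref{Cor:multiplicity} does not apply to it directly. Biduality only gives this on a Euclidean-dense subset of $(X_i^\ast)_\reg(\R)$ (this is \Cref{rem:bidualitytangency}); since you are closing anyway, that suffices, but the reduction has to be made explicit.

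The genuine gap is in the reverse inclusion. Your induction via compressions hinges on the unproven claim that the boundary-generating set of the compressed numerical range lifts into the ambient $T^\sim$. But the compressed determinant $\det(x_0\id_E+\sum x_i\,PA_iP|_E)$ is not a factor of $p$, its irreducible components bear no obvious relation to the $X_i$, and there is no evident map of dual varieties realizing the lift. The paper avoids this entirely: working conically, it shows that every \emph{exposed} ray $\R_+\ell$ of $C_e(p)^\vee$ lies in $\cl\bigl((\cV(p)^\ast)_\reg(\R)\bigr)$ by choosing a relative-interior point $x$ of the dual face on $\partial C_e(p)$, approximating $x$ by regular boundary points (\Cref{lem:some-hyper}), using \Cref{rem:bidualitytangency} to get approximating regular real points of $\cV(p)^\ast$, and passing to the limit; Straszewicz's theorem then handles all extreme rays. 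No induction and no compressions are needed. Finally, your remark that the discarded singular points ``are limits of regular points'' is exactly what fails in the Chien--Nakazato example: the non-central singular points of $X^\ast(\R)$ are \emph{not} such limits, which is why removing singular points and reclosing does not reinstate them.
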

\begin{Rem}\label{rem:main-theorem}
\begin{enumerate}
\item
\Cref{eq:modified-K} implies that the set $T^\sim$ contains all extreme points of the 
compact, convex set $W$. We show in the proof of \Cref{thm:dual-hyp-cone} that
$T^\sim$ contains the exposed points of $W$, and hence all of the extreme points by a limiting argument (Straszewicz's Theorem). But just as in Kippenhahn's original theorem, $T^\sim$ is not necessarily contained in the boundary of $W$\!, only in $W$. 
\item
We point out that \Cref{thm:dual-hyp-cone}, which implies the theorem stated here, holds more generally for hyperbolic hypersurfaces rather than just determinantal hypersurfaces. While this makes no difference in the plane, by the Helton-Vinnikov theorem, the statement is indeed more general in higher dimensions, and the proof relies purely on the real geometry of hyperbolic polynomials.
\item
The joint numerical range $W$ is a semi-algebraic set as it is a linear image of the 
semi-algebraic set $\cB$ by quantifier elimination (see e.g.~\cite[Thm.~2.2.1]{BCR}). 
The set $T_1\cup\cdots\cup T_r$ and hence its Euclidean closure $T^\sim$ are 
semi-algebraic sets as well.
\item
\Cref{eq:modified-K} not only describes a semi-algebraic set that contains the extreme points of $W$ but more precisely the Zariski closure of the set of extreme points: The union of the dual varieties $X_i^\ast$ of the irreducible components $X_i$ of the algebraic boundary of the hyperbolicity cone of $p$ is the Zariski-closure of the set of extreme points of $W$, see \Cref{rem:zariski-extreme} for details.
\end{enumerate}
\end{Rem}
We organize the article as follows. 
\Cref{sec:prelim} collects preliminaries
from convex geometry and real algebraic geometry.
\Cref{sec:duality-cones} presents 
a detailed discussion of the remarkable fact, proved by the second author in 
\cite{Sinn2015}, that the dual convex cone $C^\vee$ to a hyperbolicity cone $C$ 
is the closed convex cone generated by a particular semi-algebraic set. This 
implies that every base of $C^\vee$ is the closed {\em convex hull} of a section 
of that semi-algebraic set. The same is true for linear images of the bases as we 
show in \Cref{sec:duality-selfdual}, because (up to 
isomorphism) they are bases of dual convex cones to sections of $C$, which are 
hyperbolicity cones themselves. Returning to the cone of positive semi-definite 
matrices in \Cref{sec:PSD-cone}, we obtain a proof of 
\Cref{eq:modified-K} and discuss examples. We analyze the case $n=2$ 
separately in \Cref{thm:dim3}, which yields a proof of Kippenhahn's original 
result as stated in \Cref{thm:k}.
\par
%
%
%%%%%%%%%%%%%%%%%%%%%%%%%%%%%%%%%%%%%%%%%%%%%%%%%%%%%%%%%%%%%%%%%%%%%%%%%%%%
%%%%%%%%%%%%%%%%%%%%%%%%%%%%%%%%%%%%%%%%%%%%%%%%%%%%%%%%%%%%%%%%%%%%%%%%%%%%
%%%%%%%%%%%%%%%%%%%%%%%%%%%%%%%%%%%%%%%%%%%%%%%%%%%%%%%%%%%%%%%%%%%%%%%%%%%%
%%%%%%%%%%%%%%%%%%%%%%%%%%%%%%%%%%%%%%%%%%%%%%%%%%%%%%%%%%%%%%%%%%%%%%%%%%%%
%%%%%%%%%%%%%%%%%%%%%%%%%%%%%%%%%%%%%%%%%%%%%%%%%%%%%%%%%%%%%%%%%%%%%%%%%%%%
%
\section{Connections to Quantum Mechanics}
\label{sec:qm}
Physicists refer to linear images of certain subsets of the set of quantum states 
$\cB_d$ as \emph{numerical ranges}. Often they consider images under a map 
$\cB_d\to\R^n$, $\rho\mapsto\langle \rho, A_i\rangle_{i=1}^n$, where 
$A_1,A_2,\dots,A_n\in H_d$ are Hermitian matrices. We discuss examples where 
algebraic geometry could help solving problems of quantum mechanics in the 
context of numerical ranges. In this paper, we ignore numerical ranges outside the 
pattern of linear images of subsets of $\cB_d$, for example higher-rank numerical 
ranges \cite{Choi-etal2006}.
\par
%
%%%%%%%%%%%%%%%%%%%%%%%%%%%%%%%%%%%%%%%%%%%%%%%%%%%%%%%%%%%%%%%%%%%%%%%%%%%%
%%%%%%%%%%%%%%%%%%%%%%%%%%%%%%%%%%%%%%%%%%%%%%%%%%%%%%%%%%%%%%%%%%%%%%%%%%%%
%%%%%%%%%%%%%%%%%%%%%%%%%%%%%%%%%%%%%%%%%%%%%%%%%%%%%%%%%%%%%%%%%%%%%%%%%%%%
%
\subsection{Linear images of the set of all quantum states}
\label{sec:qm-all}
The joint numeri\-cal range $W_{A_1,A_2,\dots,A_n}$, as defined in \Cref{eq:jnr}, 
appears in problems of experimental and theoretical physics.
\par
The geometry of the joint numerical range has been of direct interest to the 
experimentalists Xie et al.\ \cite{Xie-etal2019}. Their drawings of data from 
photonic experiments show ellipses on the boundary of the joint numerical range 
$W_{A_1,A_2,A_3}$ of three $3$-by-$3$ matrices $A_1,A_2,A_3\in H_3$, clearly in 
agreement with the classification: the exposed faces of positive dimensions are 
ellipses and segments assembling one of ten configurations \cite{Szymanski-etal2018}. 
The possibility to carry out experiments with $4$-level quantum systems calls for a 
similar classification of $W_{A_1,A_2,A_3}$ for $4$-by-$4$ matrices 
$A_1,A_2,A_3\in H_4$.
\par
Using \Cref{thm:k} and analyzing the Kippenhahn curve, Kippenhahn \cite{Kippenhahn1951} 
obtained a classification of the numerical range $W(A_1+\ii A_2)=W_{A_1,A_2}$ in terms 
of flat portions on the boundary of $W_{A_1,A_2}$ for all Hermitian \mbox{$3$-by-$3$} 
matrices $A_1,A_2\in H_3$, see also \cite{Keeler-etal1997}. A similar approach has been 
taken for $4$-by-$4$ matrices
\cite{Camenga-etal2019,ChienNakazato2012}. We are invited to describe the exposed 
faces of positive dimensions of the joint numerical range $W_{A_1,A_2,A_3}$, employing 
\Cref{eq:modified-K} and studying the semi-algebraic set $T^\sim$, whose convex hull is
$W_{A_1,A_2,A_3}$. This should reproduce the classification of the set $W_{A_1,A_2,A_3}$ 
for $3$-by-$3$ matrices \cite{Szymanski-etal2018} and lead to a classification for 
$4$-by-$4$ matrices.
\par
Quantum thermodynamics \cite{YungerHalpern-etal2016} describes equilibrium states 
with multiple conserved quantities $A_1,A_2,\dots,A_n$ in terms of 
\emph{generalized Gibbs states}
\[
\rho_x=\tfrac{e^{x_1A_1+\dots+x_nA_n}}{\tr e^{x_1A_1+\cdots+x_nA_n}}\in\cB_d,
\qquad x=(x_1,\dots,x_n)^T\in\R^n.
\]
The ``boundary at infinity'' ($|x|\to\infty$) to the manifold $\{\rho_x : x\in\R^n\}$  
may not be closed in the Euclidean topology if the matrices $A_1,A_2,\dots,A_n$ fail 
to commute \cite{WeisKnauf2012}. The physical meaning of this topological problem 
remains mysterious \cite{Chen-etal2015}. Mathematically, the discontinuity depends on 
the geometry of the joint numerical range $W_{A_1,A_2,\dots,A_n}=\cv(T^\sim)$ 
\cite{Chen-etal2015,Rodman-etal2016,SpitkovskyWeis2018}. Hence, the semi-algebraic set 
$T^\sim$ explains infinitesimal properties of the manifold of generalized Gibbs states 
near the boundary at infinity.
\par
A connection between functional analysis and algebraic geometry awaits further 
investigation. The \emph{Wigner distribution} of a quantum state $\rho\in\cB_d$ with 
respect to $A_1,A_2,\dots,A_n$ is the tempered distribution ${\mathcal W}_\rho$ on 
$\R^n$ that satisfies
\[\textstyle
\int \operatorname{d}\!a \,{\mathcal W}_\rho(a_1,\dots,a_n) 
f(x_1 a_1 + \cdots + x_n a_n)
= \langle\rho,f(x_1 A_1 + \cdots + x_n A_n)\rangle
\]
for all $x\in\R^n$ and all infinitely differentiable functions $f:\R\to\C$. The Wigner 
distribution is a common tool in quantum optics and theoretical physics. Schwonnek and 
Werner \cite{SchwonnekWerner2018} showed that the distribution ${\mathcal W}_\rho$ is 
compactly supported on the joint numerical range $W_{A_1,A_2,\dots,A_n}$ and that the 
singularities of ${\mathcal W}_\rho$ lie in the semi-algebraic set $T^\sim$.
\par
%
%%%%%%%%%%%%%%%%%%%%%%%%%%%%%%%%%%%%%%%%%%%%%%%%%%%%%%%%%%%%%%%%%%%%%%%%%%%%
%%%%%%%%%%%%%%%%%%%%%%%%%%%%%%%%%%%%%%%%%%%%%%%%%%%%%%%%%%%%%%%%%%%%%%%%%%%%
%%%%%%%%%%%%%%%%%%%%%%%%%%%%%%%%%%%%%%%%%%%%%%%%%%%%%%%%%%%%%%%%%%%%%%%%%%%%
%
\subsection{Linear images of subsets of the set of quantum states}
\label{sec:qm-subsets}
Linear images of semi-algebraic subsets of $\cB_d$ are amenable to algebraic geometry
as well. We mention examples relevant to quantum mechanics.
\par
Algebraic geometry \cite{Conca-etal2015} has proven helpful in the theory of pure 
state tomography \cite{Heinosaari-etal2013}, the reconstruction of a pure state 
$\rho\in\cB_d$ from its expected value tuple $\langle\rho,A_i\rangle_{i=1}^n\in W^\sim$ 
in the linear image $W^\sim$ of the set of pure states defined in \Cref{eq:jnr-pure}. 
A different topic, for example, in density functional theory \cite{Chen-etal2012}, 
is describing the extreme points of the joint numerical range $W$. Both $W^\sim$ and 
$T^\sim$ are semi-algebraic subsets of $W$ that contain the extreme points of $W$. 
The set $T^\sim$ is especially suitable to study the extreme points of $W$, 
see \Cref{rem:zariski-extreme}.
\par
Many-particle systems are fascinating due to interaction and correlation between the
units. The simplest example in the quantum domain is the two-qubit system with
Hilbert space $\C^4=\C^2\otimes\C^2$. Physicists 
\cite{Chen-etal2016,Gawron-etal2010} have studied the 
\emph{joint product numerical range} of $A_1,A_2,\dots,A_n\in H_4$,
\[
\Pi
=\{\langle\psi\otimes\varphi|A_i\psi\otimes\varphi\rangle_{i=1}^n \mid
\psi,\varphi\in\C^2, \langle \psi|\psi\rangle=\langle \varphi|\varphi\rangle=1\},
\]
a linear image of the set of product states $\sigma\otimes\tau$, where 
$\sigma,\tau\in\cB_2$ are pure states. The convex hull of the product states is the 
set of {\em separable} states, the states that lack the genuine quantum correlation 
called {\em entanglement} \cite{AubrunSzarek2017,BengtssonZyczkowski2017}. Hence, 
the set $\Pi$ and its convex hull allow us to study quantum correlations.
\par
Two-qubit density matrices offer insights into statistical mechanics. As per 
the {\em quantum de Finetti theorem} \cite{Lewin-etal2014,Stormer1969}, the 
two-particle marginals of an infinite bosonic qubit-system are convex combinations 
of symmetric product states $\sigma\otimes\sigma$, where $\sigma\in\cB_2$ is a 
pure state. The ground state energy of an energy operator with two-party 
interactions on an infinite bosonic qubit-system is the distance of the origin from 
a supporting hyperplane to the set
\[
\Pi^\mathrm{sym}_{A_1,A_2,\dots,A_n} =  
\{\langle\psi\otimes\psi|A_i\psi\otimes\psi\rangle_{i=1}^n \mid
\psi\in\C^2, \langle \psi|\psi\rangle=1\}
\]
for suitable matrices $A_1,A_2,\dots,A_n\in H_4$. Notably, ruled surfaces on the 
boundary of the convex hull of $\Pi^\mathrm{sym}_{A_1,A_2,A_3}\subset\R^3$ are 
expressions of phase transitions \cite{Chen-etal2016,Zauner-etal2016}. An analogue 
to \Cref{eq:modified-K} regarding the set $\Pi^\mathrm{sym}_{A_1,A_2,A_3}$ 
would be helpful for the analysis of bosonic qubit-systems.
\par
%
%%%%%%%%%%%%%%%%%%%%%%%%%%%%%%%%%%%%%%%%%%%%%%%%%%%%%%%%%%%%%%%%%%%%%%%%%%%
%%%%%%%%%%%%%%%%%%%%%%%%%%%%%%%%%%%%%%%%%%%%%%%%%%%%%%%%%%%%%%%%%%%%%%%%%%%%
%%%%%%%%%%%%%%%%%%%%%%%%%%%%%%%%%%%%%%%%%%%%%%%%%%%%%%%%%%%%%%%%%%%%%%%%%%%%
%%%%%%%%%%%%%%%%%%%%%%%%%%%%%%%%%%%%%%%%%%%%%%%%%%%%%%%%%%%%%%%%%%%%%%%%%%%%
%%%%%%%%%%%%%%%%%%%%%%%%%%%%%%%%%%%%%%%%%%%%%%%%%%%%%%%%%%%%%%%%%%%%%%%%%%%%
%
\section{Preliminaries}
\label{sec:prelim}
We collect terms, basic results, and references to the literature regarding 
convex geometry and (real) algebraic geometry.
\par
%
%%%%%%%%%%%%%%%%%%%%%%%%%%%%%%%%%%%%%%%%%%%%%%%%%%%%%%%%%%%%%%%%%%%%%%%%%%%%
%%%%%%%%%%%%%%%%%%%%%%%%%%%%%%%%%%%%%%%%%%%%%%%%%%%%%%%%%%%%%%%%%%%%%%%%%%%%
%
\subsection{Convex Geometry}
\label{sec:conv-geo}
We discuss various notions of cones in a finite-dimensional real vector space $V$: 
cones (which may be nonconvex), convex cones, and normal cones. Additionally, we 
define affine cones over complex projective varieties in \Cref{sec:rag}. As a 
general reference for convex geometry, we recommend 
\cite{Barvinok2002,Rockafellar1970}. 
\par
A subset $C$ of $V$ is a \emph{cone} if $\lambda x\in C$ whenever $\lambda>0$ and 
$x\in C$. A subset $K$ of $V$ is a \emph{convex cone} if $K$ is a nonempty convex 
set and if $\lambda x\in K$ whenever $\lambda\geq 0$ and $x\in K$. The 
\emph{affine hull} $\aff(S)$, \emph{convex hull} $\cv(S)$, \emph{cone}, 
\emph{convex cone} $\co(S)$, and \emph{closed convex cone} generated by a subset 
$S\subset V$ is the smallest affine space, convex set, cone, convex cone, and closed 
convex cone, respectively, containing $S$. 
\par
A subset $B\subset V$ is a \emph{base} of a cone $C\subset V$ if $B$ is the intersection of $C$ with an affine hyperplane, $0\not\in\aff(B)$, 
and for all nonzero points $x\in C$ there exists $y\in B$ and $\lambda>0$ such that $x=\lambda y$. 
%Equivalently, $B$ is a base of the cone $C$ if $0\not\in\aff(B)$ and 
%$C\setminus\{0\}$ is the cone generated by $B$. 
A convex cone $K\subset V$ is \emph{pointed} if $K\cap (-K) = \{0\}$, i.e.~if $K$ 
contains no lines.
\par
We denote the dual vector space of $V$ by $V^\ast$. The \emph{annihilator} of a 
subset $S\subset V$ is
\[
S^\perp=\{\ell\in V^\ast \colon 
\ell(x)=0\,\text{ for all } x\in S\},
\]
the \emph{dual convex cone} to $S$ is
\[
S^\vee = \{\ell\in V^\ast \colon 
\ell(x)\geq 0\,\text{ for all }x\in S \},
\]
and the \emph{dual convex set} to $S$ is 
\[
S^\circ = \{\ell\in V^\ast \colon 
1+\ell(x)\geq 0\,\text{ for all } x\in S \}.
\]
We denote intersections of $S$ with affine hyperplanes avoiding the origin by
\begin{equation}\label{eq:base}
\hyper_\ell(S)=\{x\in S\colon \ell(x)=1\},
\qquad
\ell\in V^\ast, \ell\neq 0.
\end{equation}
\par
%
%\emph{Linear bases} of a cone are bases that are intersections of the cone with affine hyperplanes.
%\par 
%
\begin{Lem}\label{lem:base-affine}
Let $C\subset V$ be a cone and let $B\neq\emptyset$ be a base of $C$. Then there exists 
a nonzero functional $\ell\in V^\ast$ such that $B=\hyper_\ell(C)$. If $C$ admits a 
compact base, then $C\cup\{0\}$ is closed. 
\end{Lem}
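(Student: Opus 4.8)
The plan is to prove \Cref{lem:base-affine} in two steps, matching its two assertions.

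\textbf{Step 1: Producing the functional.} Let $B$ be a nonempty base of the cone $C$. By definition $B = C \cap \mathcal{H}$ for some affine hyperplane $\mathcal{H}$ with $0 \notin \aff(B)$. Since $0 \notin \mathcal{H}$, I can write $\mathcal{H} = \{x \in V : \ell(x) = 1\}$ for a uniquely determined nonzero linear functional $\ell \in V^\ast$; this is the standard fact that an affine hyperplane missing the origin is a level set $\ell = c$ with $c \neq 0$, and rescaling $\ell$ makes $c = 1$. It then remains to check $B = \hyper_\ell(C) = \{x \in C : \ell(x) = 1\}$. The inclusion $B \subseteq \hyper_\ell(C)$ is immediate from $B = C \cap \mathcal{H}$. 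For the reverse inclusion, take $x \in C$ with $\ell(x) = 1$; then $x \in C \cap \mathcal{H} = B$. Wait—$x \in \mathcal{H}$ is exactly $\ell(x) = 1$, so this is trivial; the only subtlety is that $x \neq 0$ (guaranteed since $\ell(0) = 0 \neq 1$) so that the base axioms are not vacuous, but for this equality of sets nothing more is needed. So Step 1 is essentially bookkeeping about hyperplanes.

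\textbf{Step 2: Closedness of $C \cup \{0\}$ when $B$ is compact.} Assume $B = \hyper_\ell(C)$ is compact. I want to show $C \cup \{0\}$ is closed. The natural description is $C \cup \{0\} = \{\lambda y : \lambda \geq 0,\ y \in B\}$: indeed every nonzero $x \in C$ equals $\lambda y$ for some $y \in B$, $\lambda > 0$ by the base axiom, and conversely $\lambda y \in C$ for $\lambda > 0$ since $C$ is a cone, while $\lambda = 0$ gives the origin. Now consider the continuous map $\Phi : [0,\infty) \times B \to V$, $(\lambda, y) \mapsto \lambda y$, so that $C \cup \{0\} = \Phi([0,\infty) \times B)$. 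The image of a compact set under a continuous map is compact, hence closed—but $[0,\infty)$ is not compact. The fix: take a sequence $x_k = \lambda_k y_k \to x$ in $V$ with $\lambda_k \geq 0$, $y_k \in B$. Applying $\ell$ gives $\ell(x_k) = \lambda_k$ (since $\ell(y_k) = 1$), and $\ell(x_k) \to \ell(x)$, so $\lambda_k \to \lambda := \ell(x) \geq 0$ is bounded. By compactness of $B$, pass to a subsequence with $y_k \to y \in B$; then $x_k \to \lambda y$, so $x = \lambda y \in C \cup \{0\}$.

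\textbf{Anticipated obstacle.} There is essentially no hard step here; the lemma is foundational. The one point requiring a little care is Step 2's noncompactness of the scaling parameter, handled by observing that $\ell$ itself reads off the scaling factor $\lambda_k = \ell(x_k)$ and hence forces boundedness automatically from convergence of $x_k$—this is precisely why the compact-base hypothesis combines with the functional from Step 1 to give closedness. I should also note in passing that $\hyper_\ell(C)$ being compact already forces $\ell$ to be strictly positive on $C \setminus \{0\}$ (otherwise $C$ would contain a ray on which $\ell$ vanishes or an unbounded slice), so that the decomposition $C \cup \{0\} = \{\lambda y : \lambda \ge 0, y \in B\}$ is genuinely a disjoint-up-to-origin fibration; but this refinement is not needed for the stated conclusion.
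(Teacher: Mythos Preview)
Your argument is correct and, in Step 2, essentially identical to the paper's: both use that $\ell$ reads off the scaling factor ($\lambda_k=\ell(x_k)$) and then invoke compactness of $B$ to extract a convergent subsequence.

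In Step 1 there is one small point you glide over: you pass from the hypothesis $0\notin\aff(B)$ to the assertion $0\notin\mathcal{H}$, but a priori $\aff(B)$ could be a proper affine subspace of $\mathcal{H}$. The implication does hold---if $\mathcal{H}$ were linear then any $y\in B$ would give $\lambda y\in C\cap\mathcal{H}=B$ for all $\lambda>0$, whence $0=2\cdot y+(-1)\cdot(2y)\in\aff(B)$---but it deserves a line. The paper avoids this by working inside the linear span $X$ of $C$: since $0\notin\aff(B)$, one picks $\hat\ell\in X^\ast$ with $\aff(B)\subset\{\hat\ell=1\}$, checks $B=\hyper_{\hat\ell}(C)$ using the base axiom, and then extends $\hat\ell$ to $V^\ast$. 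Your direct route via the ambient hyperplane $\mathcal{H}$ is arguably cleaner once the missing sentence is supplied.
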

\begin{proof}
Let $B$ be a nonempty base of the cone $C$ and let $X$ be the linear span of $C$. 
It follows from the definition of a base that there is a nonzero linear functional 
$\hat{\ell}\in X^\ast$ such that $B\subset\hyper_{\hat{\ell}}(X)$, hence 
$B=\hyper_{\hat{\ell}}(C)$. Identifying the dual space $X^\ast$ 
with any subspace complementary to the annihilator $X^\perp$ in $V^\ast$ and extending 
$\hat{\ell}$ to a functional $\ell\in V^\ast$ by the Hahn-Banach theorem, we obtain 
$B=\hyper_\ell(C)$. 
\par
Let $(x_i)\subset C$ be a sequence converging to a nonzero point $x\in V$. If the base 
$\hyper_\ell(C)$ of $C$ is compact, then the sequence 
$(\tfrac{x_i}{\ell(x_i)})\subset\hyper_\ell(C)$ has a converging subsequence with limit 
$y\in\hyper_\ell(C)$. As $\ell(x)=\lim_{i\to\infty}\ell(x_i)\geq 0$ and as 
$x=\lim_{i\to\infty}\ell(x_i)\tfrac{x_i}{\ell(x_i)}=\ell(x)y$, 
the point $x$ lies in $C$.
\end{proof}
The biduality theorem for closed convex cones follows from the separation theorem 
in convex geometry, see for example \cite[Theorem 14.1]{Rockafellar1970}. 
\begin{Thm}\label{thm:biduality-cone}
Let $K\subset V$ be a closed convex cone. Then $(K^\vee)^\vee = K$.
\end{Thm}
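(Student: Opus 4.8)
The plan is to prove the two inclusions of $(K^\vee)^\vee = K$ separately, under the canonical identification of $V$ with its bidual $V^{\ast\ast}$, which is routine in finite dimensions and makes $(K^\vee)^\vee$ again a subset of $V$. The inclusion $K \subseteq (K^\vee)^\vee$ is immediate and uses neither convexity nor closedness: if $x \in K$, then every $\ell \in K^\vee$ satisfies $\ell(x) \geq 0$ by the definition of $K^\vee$, which is exactly the condition for $x$ to lie in $(K^\vee)^\vee$; indeed $S \subseteq (S^\vee)^\vee$ holds for an arbitrary subset $S \subseteq V$.

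For the reverse inclusion $(K^\vee)^\vee \subseteq K$ I would argue by contraposition. Let $x \in V \setminus K$. Since $K$ is nonempty (it contains $0$, being a convex cone), closed, and convex, and $\{x\}$ is compact and disjoint from $K$, the separation theorem of convex geometry yields a functional $\ell \in V^\ast$ and a constant $c \in \R$ with $\ell(y) \geq c$ for all $y \in K$ and $\ell(x) < c$. Now I exploit that $K$ is a cone: for $y \in K$ and $\lambda > 0$ one has $\lambda y \in K$, so $\lambda\,\ell(y) = \ell(\lambda y) \geq c$; were $\ell(y)$ negative this would fail as $\lambda \to \infty$, so $\ell(y) \geq 0$ for every $y \in K$, i.e.~$\ell \in K^\vee$. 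Taking $y = 0$ gives $c \leq 0$, hence $\ell(x) < c \leq 0$, so $x$ pairs negatively with an element of $K^\vee$ and therefore $x \notin (K^\vee)^\vee$. This is the contrapositive of the claimed inclusion.

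The only genuine content is the invocation of the separation theorem in the correct form — strong separation of a closed convex set from an exterior point, valid in finite dimensions — together with the elementary scaling trick that upgrades an affine separating functional into an honest member of the dual cone $K^\vee$; this is also precisely where closedness of $K$ enters. It is worth recording that the same argument applied to $\cl(K)$ shows $(K^\vee)^\vee = \cl(K)$ for an arbitrary convex cone $K$, since $K^\vee = (\cl K)^\vee$, so closedness in the hypothesis is not merely a convenience.
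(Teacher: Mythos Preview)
Your proof is correct and follows exactly the approach the paper indicates: the paper does not spell out a proof but simply records that the biduality theorem follows from the separation theorem in convex geometry, citing \cite[Theorem~14.1]{Rockafellar1970}. Your argument is the standard execution of that idea --- the easy inclusion from the definitions, then separation plus the cone-scaling trick for the reverse inclusion --- so there is nothing to add.
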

We describe the family of bases of a closed convex cone. 
\par
\begin{Lem}\label{lem:char-compact-base}
Let $K\subset V$ be a closed convex cone. The following assertions 
are equivalent for all nonzero points $x\in V$.
\begin{enumerate}
\item The point $x$ is an interior point of $K$.
\item The set $\hyper_x(K^\vee)$ is a base of $K^\vee$.
\end{enumerate}
If one of these equivalent assertions is true, then the set $\hyper_x(K^\vee)$ is 
compact.
\end{Lem}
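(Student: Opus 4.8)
The plan is to route both equivalences through the auxiliary condition
\[
(\star)\qquad \ell(x)>0\ \text{ for every }\ell\in K^\vee\setminus\{0\},
\]
establishing $(1)\Longrightarrow(\star)\Longrightarrow(2)\Longrightarrow(\star)\Longrightarrow(1)$ and reading off compactness from $(\star)$. Since $x\neq 0$, the evaluation $\ell\mapsto\ell(x)$ is a nonzero linear functional on $V^\ast$, so $\{\ell\in V^\ast\colon\ell(x)=1\}$ is an affine hyperplane avoiding the origin and $\hyper_x(K^\vee)$ is by definition its intersection with $K^\vee$. Hence the first two requirements for $\hyper_x(K^\vee)$ to be a base of $K^\vee$ (being the intersection of $K^\vee$ with an affine hyperplane, and $0\notin\aff(\hyper_x(K^\vee))$) hold automatically, and the entire content of $(2)$ is the third requirement, that every nonzero $\ell\in K^\vee$ be a positive multiple of an element of $\hyper_x(K^\vee)$.

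This makes $(\star)\Longleftrightarrow(2)$ immediate: if $(\star)$ holds and $\ell\in K^\vee\setminus\{0\}$, then $\lambda:=\ell(x)>0$, the functional $\lambda^{-1}\ell\in K^\vee$ lies on the hyperplane $\{\ell(x)=1\}$, and $\ell=\lambda\cdot(\lambda^{-1}\ell)$, so $(2)$ holds; conversely, if $(2)$ holds and $\ell\in K^\vee\setminus\{0\}$, writing $\ell=\lambda\mu$ with $\mu\in\hyper_x(K^\vee)$ and $\lambda>0$ gives $\ell(x)=\lambda>0$. For $(1)\Longrightarrow(\star)$ I would argue that if $x\in\intr(K)$ and $\ell\in K^\vee\setminus\{0\}$, then $\ell(x)\geq 0$ because $x\in K$, and $\ell(x)=0$ is impossible: a ball $\{x+v:\|v\|\leq\varepsilon\}\subseteq K$ would give $0\leq\ell(x\pm v)=\pm\ell(v)$ for all $\|v\|\leq\varepsilon$, forcing $\ell=0$.

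The reverse implication $(\star)\Longrightarrow(1)$ is the substantial one and is where the hypothesis that $K$ is closed enters. From $(\star)$ we get $\ell(x)\geq 0$ for all $\ell\in K^\vee$, so $x\in(K^\vee)^\vee=K$ by \Cref{thm:biduality-cone}. If $x$ were not an interior point, then $x\in K\setminus\intr(K)$ and the supporting hyperplane theorem would give a nonzero $\ell\in V^\ast$ with $\ell(y)\geq\ell(x)$ for all $y\in K$; since $0\in K$ this forces $\ell(x)\leq 0$, while scaling ($ty\in K$ for $y\in K$ and $t>0$, so $t\ell(y)\geq\ell(x)$ for all $t$) forces $\ell(y)\geq 0$, i.e.\ $\ell\in K^\vee$, and then $x\in K$ forces $\ell(x)\geq 0$; hence $\ell(x)=0$, contradicting $(\star)$. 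For compactness under $(\star)$: the set $\hyper_x(K^\vee)$ is closed as the intersection of the closed cone $K^\vee$ with a closed hyperplane, and it is bounded, for otherwise picking $\ell_i\in\hyper_x(K^\vee)$ with $\|\ell_i\|\to\infty$ and passing to a subsequential limit $\mu$ of $\ell_i/\|\ell_i\|$ would yield $\mu\in K^\vee$ with $\|\mu\|=1$ but $\mu(x)=\lim 1/\|\ell_i\|=0$, again contradicting $(\star)$.

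I expect $(\star)\Longrightarrow(1)$ to be the main obstacle, as it is the only place that genuinely uses closedness of $K$ (through biduality and the separation theorem); it is also worth keeping an eye on two degenerate situations the argument must survive, namely $K=V$, where $K^\vee=\{0\}$ and $\hyper_x(K^\vee)=\emptyset$ is (vacuously) a base while $x\in\intr(K)$ trivially, and $\intr(K)=\emptyset$, where $(1)$ fails and $K^\vee$ contains a line, so $(\star)$ fails too; in both cases there is nothing further to check.
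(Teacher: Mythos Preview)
Your proof is correct. Both your argument and the paper's route the equivalence through the same auxiliary condition $(\star)$, so the overall strategy coincides; the difference lies only in the tools used to link $(1)$ with $(\star)$ and to deduce compactness. The paper invokes Rockafellar's support-function characterization of interior points (his Theorem~13.1) to obtain $(1)\Leftrightarrow(\star)$ in one stroke, and then appeals to recession-cone machinery (Rockafellar, Cor.~8.3.2--3 and Thm.~8.4) for boundedness, whereas you argue both directions of $(1)\Leftrightarrow(\star)$ by hand---the ball argument for $(1)\Rightarrow(\star)$ and biduality together with a supporting hyperplane for $(\star)\Rightarrow(1)$---and prove boundedness by a direct sequential argument on the unit sphere. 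Your version is more self-contained and makes the role of closedness (via \Cref{thm:biduality-cone}) explicit; the paper's version is shorter by outsourcing to standard convex-analysis references.
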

\begin{proof}
Let $x\in V$ be nonzero. By Theorem 13.1 in \cite{Rockafellar1970} the point 
$x$ is an interior point of $K$ if and only if $\ell(x)<\delta^\ast(\ell|K)$ 
holds for all $\ell\in V^\ast\setminus\{0\}$, where
\[
\delta^\ast(\ell|K)
=\sup_{x\in K}\ell(x)
=\left\{\begin{array}{ll}
0 & \text{if $\ell\in-K^\vee$,}\\
\infty & \text{if $\ell\not\in-K^\vee$,}
\end{array}\right.
\qquad\ell\in V^\ast,
\]
is the support function of $K$. Hence the assertion (1) is equivalent to $\ell(x)>0$ 
for all $\ell\in K^\vee\setminus\{0\}$, which is equivalent to $\hyper_x(K^\vee)$ 
being a base of $K^\vee$.
\par 
The compactness follows from properties of the \emph{recession cone}
$0^+(\hyper_x(K^\vee))$, the set of vectors $\ell'\in V^\ast$ such that 
$\ell+\lambda\ell'$ lies in $\hyper_x(K^\vee)$ for all $\ell\in\hyper_x(K^\vee)$ and 
$\lambda\geq 0$. We can assume that the set $\hyper_x(K^\vee)$ is nonempty. In this 
case we have $0^+(\hyper_x(K^\vee))=x^\perp\cap K^\vee$ by Coro.~8.3.2 and~8.3.3 of
\cite{Rockafellar1970} as the recession cone of $\hyper_x(V^\ast)$ is $x^\perp$.
Since $\hyper_x(K^\vee)$ is a base of $K^\vee$, we have $\ell(x)>0$ for all 
$\ell\in K^\vee\setminus\{0\}$, hence $x^\perp\cap K^\vee=\{0\}$. Now 
\cite[Thm.~8.4]{Rockafellar1970} shows that $\hyper_x(K^\vee)$ is bounded, hence
compact.
\end{proof}
Cones that are contained in a pointed closed convex cone behave nicely.
\par
\begin{Lem}\label{lem:cn-ccc}
Let $K\subset V$ be a closed convex cone with nonempty interior. Let $C\subset K^\vee$ 
be a cone and let $x$ be a nonzero interior point of $K$. Then 
\begin{equation}\label{eq:compact-base}
\hyper_x(\co(\cl(C)))
=\cv(\hyper_x(\cl(C)))
=\cv(\cl(\hyper_x(C)))
\end{equation}
is a compact base of the pointed, closed convex cone $\co(\cl(C))=\cl(\co(C))$.
\end{Lem}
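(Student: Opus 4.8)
The plan is to carry out everything inside the dual cone $K^\vee$, exploiting that it is pointed and carries the compact base $\hyper_x(K^\vee)$. First I would invoke \Cref{lem:char-compact-base}: since $x$ is a nonzero interior point of the closed convex cone $K$, the set $\hyper_x(K^\vee)$ is a compact base of $K^\vee$ and $\ell(x)>0$ for every $\ell\in K^\vee\setminus\{0\}$, so in particular $K^\vee$ is pointed. As $C\subseteq K^\vee$ and $K^\vee$ is closed and convex, $\cl(C)$ is a closed cone contained in $K^\vee$ and $\co(\cl(C))\subseteq K^\vee$; hence both are pointed and every nonzero point in either of them has strictly positive value at $x$. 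If $C\subseteq\{0\}$ then every set appearing in the statement is $\emptyset$ or $\{0\}$ and there is nothing to prove, so I assume $C$ contains a nonzero point.

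Next I would establish the second equality in the sharper form $\hyper_x(\cl(C))=\cl(\hyper_x(C))$. The inclusion ``$\supseteq$'' is clear, since evaluation at $x$ is continuous and equal to $1$ on $\hyper_x(C)$. For ``$\subseteq$'', given $z\in\cl(C)$ with $z(x)=1$, choose $z_i\in C$ with $z_i\to z$; then $z_i(x)\to1$, so $z_i(x)>0$ for $i$ large, and the rescaled vectors $z_i/z_i(x)\in\hyper_x(C)$ converge to $z$. Set $B:=\cv(\hyper_x(\cl(C)))=\cv(\cl(\hyper_x(C)))$. Being a closed subset of the compact set $\hyper_x(K^\vee)$, the set $\hyper_x(\cl(C))$ is compact, hence so is its convex hull $B$ by Carath\'eodory's theorem; and $0\notin B$ because every point of $B$ has value $1$ at $x$.

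The crux is to identify $\co(\cl(C))$ with the cone over $B$. Every nonzero $d\in\cl(C)$ satisfies $d(x)>0$ and $d=d(x)\cdot\bigl(d/d(x)\bigr)$ with $d/d(x)\in\hyper_x(\cl(C))\subseteq B$; since $B$ is convex this gives $\co(\cl(C))=\co(B)=\bigcup_{\lambda\geq0}\lambda B$. The one point that needs more than bookkeeping is that this set is \emph{closed}, and here the base hyperplane does the work for us: if $\lambda_ib_i\to z$ with $\lambda_i\geq0$ and $b_i\in B$, then $\lambda_i=(\lambda_ib_i)(x)\to z(x)$, so either $z(x)=0$, forcing $z=0\in\co(B)$, or $z(x)>0$, in which case $b_i=(\lambda_ib_i)/\lambda_i\to z/z(x)\in B$ and $z\in\co(B)$. (This is exactly the general fact that the conical hull of a compact convex set avoiding the origin is closed, made transparent by the hyperplane $\{\ell:\ell(x)=1\}$, which controls the scaling parameter.) Intersecting $\co(\cl(C))=\bigcup_{\lambda\geq0}\lambda B$ with that hyperplane returns exactly $B$, so $\hyper_x(\co(\cl(C)))=B$ is a compact base; together with the previous paragraph this is the asserted three-fold equality.

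Finally, $\co(\cl(C))$ is a closed convex cone containing $C$, hence it contains the smallest such cone $\cl(\co(C))$; conversely $\cl(\co(C))$ is a closed convex cone containing $\cl(C)$, hence it contains $\co(\cl(C))$, so the two agree, and pointedness has already been noted. I do not anticipate a deep obstacle here: the entire content is the closedness of the convex conical hull, and the argument above shows it follows automatically once one knows that the natural base of the cone lies in an affine hyperplane missing the origin.
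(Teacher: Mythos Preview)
Your proof is correct and follows essentially the same approach as the paper: both use \Cref{lem:char-compact-base} to obtain the compact base $\hyper_x(K^\vee)$, establish $\hyper_x(\cl(C))=\cl(\hyper_x(C))$ by rescaling, and deduce compactness of the convex hull. The only cosmetic difference is that where the paper invokes \Cref{lem:base-affine} (a cone with compact base is closed) to get closedness of $\co(\cl(C))$, you unfold that argument explicitly via the sequence $\lambda_i b_i$, using that the hyperplane $\{\ell:\ell(x)=1\}$ pins down the scalar~$\lambda_i$.
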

\begin{proof}
Let $x\neq 0$ be an interior point of $K$. Then $\hyper_x(K^\vee)$ is a compact base 
of $K^\vee$ by \Cref{lem:char-compact-base}. \emph{A fortiori}, $\hyper_x(C)$ is a 
base of the cone $C\subset K^\vee$; thus $\hyper_x(\co(C))\subset\cv(\hyper_x(C))$. 
The converse inclusion is clear and proves the first equality sign in 
\Cref{eq:compact-base} after replacing $C$ with $\cl(C)$. The inclusion 
$\hyper_x(\cl(C))\subset\cl(\hyper_x(C))$ holds, again as $\hyper_x(C)$ is a base of 
$C$. The converse inclusion is clear and proves the second equality sign in
\Cref{eq:compact-base}. As $\hyper_x(\cl(C))$ is compact, its convex hull is compact 
by Theorem~17.2 in \cite{Rockafellar1970}. Hence $\hyper_x(\co(\cl(C)))$ is a compact 
base of the convex cone $\co(\cl(C))$, which is closed by \Cref{lem:base-affine}. 
This proves $\cl(\co(C))\subset\co(\cl(C))$; the converse inclusion is clear. 
\end{proof}
\Cref{pro:dual-of-intersection} allows us to focus on irreducible varieties in 
\Cref{sec:duality-cones}.
\par
\begin{Prop}\label{pro:dual-of-intersection}
Let $K_1,\ldots,K_r\subset V$  be convex cones and let $e\neq 0$ be an interior point 
of $K=K_1\cap K_2\cap\dots\cap K_r$. Let $C_i\subset V^\ast$ be a cone such that 
$K_i^\vee=\cl(\co(C_i))$, that is to say, the dual convex cone to $K_i$ 
is the closed convex cone generated by $C_i$ for $i=1,\ldots,r$. Then
\begin{align}\label{eq:inter-dual}
K^\vee 
= \co\big(\cl(C_1\cup C_2\cup\dots\cup C_r)\big)
\end{align}
and
\begin{align}\label{eq:inter-dual-cl}
\hyper_e(K^\vee)
= \cv\Big(\cl\big(\hyper_e(C_1)\cup\hyper_e(C_2)\cup\dots\cup\hyper_e(C_r)\big)\Big).
\end{align}
\end{Prop}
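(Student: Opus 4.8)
The plan is to reduce \Cref{pro:dual-of-intersection} to the duality for a single cone together with the convex-geometric lemmas already established, principally \Cref{lem:cn-ccc}. First I would recall the elementary fact that the dual cone turns intersections into (closures of) sums: for closed convex cones $K_1,\dots,K_r$ one has $(K_1\cap\dots\cap K_r)^\vee=\cl(K_1^\vee+\dots+K_r^\vee)$, which follows from \Cref{thm:biduality-cone} and the separation theorem. (If the $K_i$ are not assumed closed, replace each by its closure; since $e$ is an interior point of the intersection, passing to closures does not change $K=K_1\cap\dots\cap K_r$ as a set with nonempty interior, and hence does not change $K^\vee$.) Using $K_i^\vee=\cl(\co(C_i))$ and the fact that the convex-cone hull of a union is the sum of the convex-cone hulls, one gets
\[
K^\vee=\cl\big(\co(C_1)+\dots+\co(C_r)\big)=\cl\big(\co(C_1\cup\dots\cup C_r)\big)=\co\big(\cl(C_1\cup\dots\cup C_r)\big),
\]
where the last equality is exactly the identity $\cl(\co(C))=\co(\cl(C))$ supplied by \Cref{lem:cn-ccc}. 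This already proves \Cref{eq:inter-dual}, provided one checks that the hypotheses of \Cref{lem:cn-ccc} are met.

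To apply \Cref{lem:cn-ccc} with $C=C_1\cup\dots\cup C_r$ I need a pointed closed convex cone $\wh K$ with nonempty interior such that $C\subset\wh K^\vee$ and $e$ is an interior point of $\wh K$. The natural choice is $\wh K=K=K_1\cap\dots\cap K_r$ (closed, and with $e$ in its interior by hypothesis). Then $\co(\cl(C))\subset K^\vee$ automatically from the computation above, so in particular each $C_i\subset K^\vee$ and hence $C\subset K^\vee=\wh K^\vee$. Pointedness of $\co(\cl(C))$ is not needed as an input: \Cref{lem:cn-ccc} concludes that $\co(\cl(C))$ is pointed precisely because it sits inside $\wh K^\vee$ and $\wh K$ has nonempty interior. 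So the hypotheses reduce to: $K$ is a closed convex cone with nonempty interior (given, via $e$), and $C\subset K^\vee$ (just shown). With that, \Cref{lem:cn-ccc} gives simultaneously that $\co(\cl(C))=\cl(\co(C))$ and that
\[
\hyper_e(\co(\cl(C)))=\cv(\hyper_e(\cl(C)))=\cv(\cl(\hyper_e(C)))
\]
is a compact base of $K^\vee$. The only remaining point for \Cref{eq:inter-dual-cl} is to commute the hyperplane section past the union: $\hyper_e(C_1\cup\dots\cup C_r)=\hyper_e(C_1)\cup\dots\cup\hyper_e(C_r)$, which is immediate from the definition \eqref{eq:base}, and then $\cl(\hyper_e(C_1)\cup\dots\cup\hyper_e(C_r))$ equals $\cl(\hyper_e(C))$ since finite unions commute with Euclidean closure. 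Combining, $\hyper_e(K^\vee)=\cv\big(\cl(\hyper_e(C_1)\cup\dots\cup\hyper_e(C_r))\big)$, which is \Cref{eq:inter-dual-cl}.

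The main obstacle I anticipate is the bookkeeping around closures and the non-closedness of the $K_i$: one must be careful that replacing $K_i$ by $\cl(K_i)$ is harmless, which is where the interior-point hypothesis on $K$ does real work (for a convex set, passing to the closure changes neither the interior nor the supporting hyperplanes, hence neither the dual cone, as long as the interior is nonempty). A secondary subtlety is justifying $\cl(K_1^\vee+\dots+K_r^\vee)=\cl(\co(C_1)+\dots+\co(C_r))$: taking closures termwise inside a sum is legitimate because $\cl(A)+\cl(B)\subset\cl(A+B)\subset\cl(\cl(A)+\cl(B))$, and then one closure absorbs the discrepancy. None of this is deep, but it is the part where a hasty argument could go wrong, so I would state the intersection-duality formula as a named preliminary fact (with the reference to \cite{Rockafellar1970}) and invoke \Cref{lem:cn-ccc} as a black box for everything convex-hull/closure related.
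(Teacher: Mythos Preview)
Your proposal is correct and follows essentially the same route as the paper: reduce to the duality formula for an intersection of cones, then apply \Cref{lem:cn-ccc} with $C=C_1\cup\dots\cup C_r$ to handle both the closure/convex-hull commutation and the passage to the hyperplane section. The only substantive difference is that the paper invokes Rockafellar's Corollary~16.4.2, which uses the interior point $e$ to conclude directly that $K^\vee=K_1^\vee+\dots+K_r^\vee$ \emph{without} taking a closure; this lets the paper skip the closure bookkeeping you carry out in your last paragraph (replacing $K_i$ by $\cl(K_i)$, and the $\cl(A+B)=\cl(\cl A+\cl B)$ manipulation). Your version is a perfectly valid alternative, just slightly less streamlined.
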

\begin{proof}
Corollary~16.4.2 of \cite{Rockafellar1970} shows 
$K^\vee=K_1^\vee+K_2^\vee+\cdots+K_r^\vee$ as $e$ is an interior point of $K_i$ for all 
$i=1,\ldots,r$, hence $K^\vee=\co(K_1^\vee\cup K_2^\vee\cup\cdots\cup K_r^\vee)$.
This proves \Cref{eq:inter-dual}, as we have, by assumption and by \Cref{lem:cn-ccc}, 
\[
K_i=\cl(\co(C_i))=\co(\cl(C_i)),
\qquad i=1,\ldots,r.
\]
Equation \eqref{eq:inter-dual-cl} follows from the equation 
$\hyper_e(\co(\cl(C)))=\cv(\cl(\hyper_e(C)))$ provided in \Cref{lem:cn-ccc}
using $C=\bigcup_{i=1}^rC_i$ and from \Cref{eq:inter-dual}.
\end{proof}
We discuss faces and normal cones. Let $C\subset V$ be a convex subset. A subset 
$F\subset C$ is a {\em face} of $C$ if $F$ is convex and whenever 
$(1-\lambda)x+\lambda y\in F$ for some $\lambda\in(0,1)$ and $x,y\in C$, then $x$ 
and $y$ are also in $F$. A subset $F\subset C$ is an {\em exposed face} of $C$ if 
there is a linear functional $\ell\in V^\ast$ such that $x\in F$ if and only if 
$\ell(x)=\inf_{y\in C}\ell(y)$ for all $x\in C$. A point $x\in C$ is an 
{\em extreme point} (resp.~{\em exposed point}) of $C$ if $\{x\}$ is a face 
(resp.~exposed face) of $C$. Let $\R_+=\{\lambda\in\R:\lambda\geq 0\}$. 
If $x\in C$ is a nonzero point, and $\R_+x$ is a face (resp.~exposed face) of $C$, then 
the set $\R_+x$ is called an {\em extreme ray} (resp.~{\em exposed ray}) of $C$. 
\par
We denote the set of all faces of $C$ by $\cF(C)$. The family $\cF(C)$ is a complete 
lattice of finite length under the partial ordering of set inclusion \cite{Tam1985}. 
The {\em duality operator} of the closed convex cone $C$ is the map
\[
N_C:\cF(C)\to\cF(C^\vee),
\qquad
F\mapsto F^\perp\cap C^\vee.
\]
The map $N_C$ is antitone, that is to say, $F\subset G$ implies $N_C(F)\supset N_C(G)$ 
for all faces $F,G\in\cF(C)$. The image of $N_C$ is the set of nonempty exposed faces 
of $C^\vee$ and \cite[Prop.~2.4]{Tam1985} shows  $F\subset N_{C^\vee}\circ N_C(F)$ for 
all faces $F\in\cF(C)$. We have just confirmed that the pair of duality operators 
$N_C,N_{C^\vee}$ defines a \emph{Galois connection} between $\cF(C)$ and $\cF(C^\vee)$. 
Theorem~20 in Section~V.8 of \cite{Birkhoff1973} then proves the following assertion,
which we use in \Cref{thm:dual-hyp-cone}.
\par
\begin{Lem}\label{lem:lattice-iso}
Let $C\subset V$ be a closed convex cone. The duality operator $N_C$ restricts to 
an antitone lattice isomorphism from the set of nonempty exposed faces of $C$ to the 
set of nonempty exposed faces of $C^\vee$. The inverse isomorphism is the restricted
duality operator $N_{C^\vee}$.
\end{Lem}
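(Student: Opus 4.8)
The plan is to deduce this purely from the general theory of Galois connections, since everything specific to convex cones has already been assembled above the statement: the pair $(N_C,N_{C^\vee})$ is an (antitone) Galois connection between the complete lattices $\cF(C)$ and $\cF(C^\vee)$, one has $F\subset N_{C^\vee}(N_C(F))$ for every face $F$, and the image of the duality operator of a closed convex cone is exactly the set of nonempty exposed faces of the dual cone.

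First I would recall the structural consequences of a Galois connection $f\colon P\to Q$, $g\colon Q\to P$ between complete lattices $P,Q$ --- Theorem~20 in Section~V.8 of \cite{Birkhoff1973}: the composites $g\circ f$ and $f\circ g$ are closure operators, one has $fgf=f$ and $gfg=g$, the $(g\circ f)$-closed elements of $P$ are exactly the elements of the image $g(Q)$ and the $(f\circ g)$-closed elements of $Q$ are exactly those of $f(P)$, and $f$ restricts to a bijection from the $(g\circ f)$-closed elements onto the $(f\circ g)$-closed elements whose inverse is the restriction of $g$. Both restricted maps are antitone, so this bijection is a dual isomorphism of the complete lattices of closed elements, i.e.\ an antitone lattice isomorphism. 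It then remains only to identify the two lattices of closed elements in the case $f=N_C$, $g=N_{C^\vee}$.

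By the image characterization recalled above the statement, the $(N_C\circ N_{C^\vee})$-closed elements of $\cF(C^\vee)$ --- the image of $N_C$ --- form precisely the set of nonempty exposed faces of $C^\vee$. For the other side I would use biduality: by \Cref{thm:biduality-cone} we have $(C^\vee)^\vee=C$, so $N_{C^\vee}$ is the duality operator of the closed convex cone $C^\vee$ whose dual cone is $C$, and the same image characterization, now applied with $C^\vee$ in the role of $C$, identifies the image of $N_{C^\vee}\colon\cF(C^\vee)\to\cF(C)$, i.e.\ the set of $(N_{C^\vee}\circ N_C)$-closed elements of $\cF(C)$, with the set of nonempty exposed faces of $(C^\vee)^\vee=C$. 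Substituting these two identifications into the previous paragraph yields the claim, with $N_{C^\vee}$ as the stated inverse. I do not anticipate a genuine obstacle: the only point requiring care is the bookkeeping when the image characterization is invoked symmetrically for $C^\vee$ --- which is exactly where biduality enters --- while the fact that the meets and joins on the sublattices of exposed faces behave as required is automatic from the general Galois-connection theorem.
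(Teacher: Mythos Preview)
Your proposal is correct and follows exactly the route the paper takes: the paper establishes that $(N_C,N_{C^\vee})$ is an antitone Galois connection with the stated image characterization, and then simply invokes Theorem~20 in Section~V.8 of \cite{Birkhoff1973} to conclude. Your write-up merely unpacks that citation a little more explicitly, including the appeal to biduality (\Cref{thm:biduality-cone}) needed to identify the closed elements on the $C$-side with the nonempty exposed faces of $C$.
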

It is easy to see that the exposed face $N_C(F)$ of $C^\vee$ is the (inner) 
{\em normal cone} to the closed convex cone $C$ at $F$,
\[
N_C(F)
=\{\ell\in V^\ast \colon \ell(y-x)\geq 0\; \forall y\in C \; \forall x\in F\},
\]
for all nonempty faces $F\subset C$. In addition, for every relative interior point $x$ of $F$, we have
\begin{equation}\label{eq:nc-x}
N_C(F)
=x^\perp\cap C^\vee
=\{\ell\in V^\ast \colon \ell(y-x)\geq 0\; \forall y\in C\},
\end{equation}
hence we also refer to $N_C(F)$ as the (inner) 
normal cone to $C$ at $x$.
\par
\subsection{Real Algebraic Geometry}
\label{sec:rag}
We are working in the setup of real algebraic geometry. A 
\emph{(real) affine variety} for us is a subset of $\C^n$ (for some $n\in\N$) 
that is defined by a finite number of polynomial equations 
$p_1 = p_2 = \cdots = p_r = 0$, $r\in\N$, with real coefficients 
$p_i\in\R[x_1,x_2,\dots,x_n]$, $i=1,\dots,r$. The affine varieties in $\C^n$ 
are the closed sets of the \emph{Zariski topology} on $\C^n$. So the Zariski 
closure of a set $S\subset \C^n$ is the smallest real affine variety containing 
$S$. A \emph{(real) projective variety} for us is a subset of projective space 
$\P^n$ that is defined by a finite number of homogeneous polynomial equations 
$p_1 = p_2 = \cdots = p_r = 0$, $r\in\N$, with real coefficients 
$p_i\in\R[x_0,x_1,\dots,x_n]$, $i=1,\dots,r$. The projective varieties in 
$\P^n$ are the closed sets of the \emph{Zariski topology} on $\P^n$.
Identifying points in $\P^n$ with lines in $\C^{n+1}$, a projective variety 
can be seen as an affine variety in $\C^{n+1}$ which is an algebraic cone. 
The \emph{affine cone} $\wh{S}$ over a subset $S\subset\P^n$  is the union of 
all lines in $\C^{n+1}$ spanned by a vector $(x_0,x_1,\dots,x_n)^T$ 
such that $(x_0:x_1:\dots:x_n)\in S$. Conversely, the projective variety 
$\P(X)\subset\P^n$ associated with an algebraic cone $X\subset\C^{n+1}$ 
consists of the points $(x_0:x_1:\dots:x_n)\in\P^n$ for which the vector 
$(x_0,x_1,\dots,x_n)^T$ is included in $X$. These notions are explained in 
introductory textbooks on algebraic geometry like \cite{Harris1992} with the 
caveat that affine and projective varieties are usually complex varieties, 
i.e.~defined by finitely many polynomial equations with complex coefficients. 
A point $x\in\P^n$ is {\em real} if the line $\wh{\{x\}}\subset\C^{n+1}$ 
contains a nonzero real point. We denote the set of real points of a subset 
$S\subset\C^n$ or $S\subset\P^n$ by $S(\R)$. 
The dual projective space $(\P^n)^\ast = \P((\C^{n+1})^\ast)$ is the projective 
space over the dual vector space so that the hyperplanes in $\P^n$ are in 
one-to-one correspondence with points in $(\P^n)^\ast$. We specify a functional  
$\ell\in(\P^n)^\ast$ in terms of its hyperplane 
\[
H\ =\ \cV(\ell)\ \subset\ \P^n
\]
by writing $\ell=[H]$. Identifying $((\P^n)^\ast)^\ast=\P^n$, a point $x\in\P^n$
defines a hyperplane in $(\P^n)^\ast$ which we denote by
\[
\cV(x)\ =\ \{y\in(\P^n)^\ast \colon x_0y_0 + x_1y_1 + \cdots + x_ny_n = 0 \}.
\]
\begin{Def}
Let $X\subset\P^n$ be an irreducible projective variety. We define the 
\emph{projective dual variety} $X^\ast$ of $X$ as the Zariski closure of the set 
of hyperplanes that are tangent to $X$ at some regular point, i.e.~the closure of
\[
\{[H] \in (\P^n)^\ast\colon T_x X\subset H \text{ for some } x\in X_\reg\}.
\]
\end{Def}
\par
An instructive and especially nice case of duality occurs for hypersurfaces 
defined by quadratic forms of full rank.
\begin{Exm}\label{exm:quadric}
Let $q \in\R[x_0,x_1,\dots,x_n]$ be a quadratic form and let $M_q$ be the real 
symmetric $(n+1)\times(n+1)$ matrix representing $q$, i.e.~with
\[
q = (x_0,x_1,\dots,x_n)M_q (x_0,x_1,\dots,x_n)^T.
\]
The projective variety $X = \cV(q)\subset \P^n$ is smooth if and only if the rank 
of $M_q$ is $n+1$. We compute the dual variety of $X$ under the assumption that 
$X$ is smooth. Let $x = (x_0:x_1:\dots:x_n)\in X$ be a point. The differential 
$\ell_x = 2x^TM_q\in(\P^n)^\ast$ of $q$ at $x$ defines the tangent hyperplane 
$T_xX = \{y\in\P^n\colon \ell_x(y) = 0\}$ to $X$ at $x$. In other words, the dual 
variety to $X$ is the Zariski closure of the set 
$\{\ell_x\colon x\in X\}\subset (\P^n)^\ast$. The condition $x\in X$ is 
$0 = x^TM_q x =  (\ell_x^T M_q^{-1}) M_q (M_q^{-1}\ell_x)$. We conclude that 
$X^\ast$ is the quadratic hypersurface defined by $M_q^{-1}$.
\end{Exm}
\par
For irreducible algebraic varieties, the famous biduality theorem holds.
\begin{Thm}[Biduality Theorem; see {\cite[Ch.~1, Thm.~1.1]{GKZ1994}}]\label{thm:biduality}
  If $X\subset \P^n$ is an irreducible projective 
  variety, then $(X^\ast)^\ast = X$ under the canonical identification of the 
  bidual of $\P^n$ with $\P^n$ itself.
\end{Thm}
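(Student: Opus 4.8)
The plan is to prove biduality by way of the \emph{conormal variety}; we may assume $X\neq\P^n$, the contrary case being degenerate. Inside $\P^n\times(\P^n)^\ast$ consider
\[
A=\{(x,[H])\colon x\in X_\reg,\ T_xX\subset H\},
\]
and let $\mathrm{CN}(X)$ be its Zariski closure, the conormal variety of $X$. It is contained in the incidence correspondence $I=\{(x,[H])\colon x\in H\}$, a smooth hypersurface of $\P^n\times(\P^n)^\ast$ that is \emph{symmetric}: under the transposition isomorphism $\tau\colon\P^n\times(\P^n)^\ast\to(\P^n)^\ast\times\P^n$ together with the canonical identification $((\P^n)^\ast)^\ast=\P^n$ one has $\tau(I)=I$. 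Over $X_\reg$ the set $A$ is a $\P^{\,n-1-\dim X}$-bundle, so $\mathrm{CN}(X)$ is irreducible of dimension $n-1$; its first projection $\pi_1$ has image $X$, while its second projection $\pi_2$ has image exactly $X^\ast$ by definition of the dual variety. Carrying out the same construction for the (irreducible) dual variety $X^\ast\subset(\P^n)^\ast$ produces an irreducible $(n-1)$-dimensional variety $\mathrm{CN}(X^\ast)\subset(\P^n)^\ast\times\P^n$ whose second projection $\pi_2$ is, by definition, $(X^\ast)^\ast$.

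Everything then reduces to the \textbf{reflexivity lemma}: $\tau(\mathrm{CN}(X))=\mathrm{CN}(X^\ast)$. Granting it, and using $\tau\circ\tau=\mathrm{id}$ and $\pi_2\circ\tau=\pi_1$,
\[
(X^\ast)^\ast=\pi_2\big(\mathrm{CN}(X^\ast)\big)=\pi_2\big(\tau(\mathrm{CN}(X))\big)=\pi_1\big(\mathrm{CN}(X)\big)=X,
\]
which is the assertion of the theorem. Since $\tau(\mathrm{CN}(X))$ and $\mathrm{CN}(X^\ast)$ are both irreducible of dimension $n-1$, it is enough to prove one inclusion, and for that it suffices to check it on a dense subset of $\mathrm{CN}(X)$: I would verify that for a \emph{general} pair $(x_0,[H_0])\in\mathrm{CN}(X)$ the transposed pair $([H_0],x_0)$ lies in $\mathrm{CN}(X^\ast)$, i.e.\ that $[H_0]$ is a regular point of $X^\ast$ and the hyperplane $\cV(x_0)\subset(\P^n)^\ast$ contains the tangent space $T_{[H_0]}X^\ast$.

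To establish this I would compute $T_{[H_0]}X^\ast$ as the image of $T_{(x_0,[H_0])}\mathrm{CN}(X)$ under $\mathrm{d}\pi_2$: at a general point $\pi_2$ is dominant onto $X^\ast$ and, because the ground field has characteristic zero, generically smooth there, so this identification is valid and $[H_0]\in(X^\ast)_\reg$. Next, choose affine coordinates with $x_0=[1:0:\dots:0]$ and $T_{x_0}X=\{x_{k+1}=\dots=x_n=0\}$, $k=\dim X$, and write $X$ near $x_0$ as the graph of a map over its tangent space vanishing to second order at $x_0$. Parametrizing $\mathrm{CN}(X)$ near $(x_0,[H_0])$ by the base point together with the conormal direction, and differentiating the equations cutting it out --- incidence $x\in H$ and tangency $T_xX\subset H$ --- one reads off directly that every tangent vector to $X^\ast$ at $[H_0]$ pairs to zero against the coordinate vector $x_0$, which is precisely the statement $T_{[H_0]}X^\ast\subset\cV(x_0)$. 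The calculation is short once the coordinates are fixed.

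The crux is the reflexivity lemma, and the genuinely delicate point in it is the appeal to characteristic zero: replacing $\pi_2$ by its differential uses generic smoothness, which can fail in positive characteristic --- where, consistently, biduality itself fails for ``strange'' non-reflexive varieties. A secondary point needing care is to pick $(x_0,[H_0])$ general enough that $x_0\in X_\reg$, $[H_0]\in(X^\ast)_\reg$ and $\pi_2$ is smooth at $(x_0,[H_0])$ all at once; this is possible because each of these requirements holds on a dense open subset of the irreducible variety $\mathrm{CN}(X)$.
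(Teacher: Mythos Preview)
The paper does not prove this theorem; it merely quotes it from \cite[Ch.~1, Thm.~1.1]{GKZ1994} and moves on to its consequences. So there is no ``paper's own proof'' to compare against.

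Your outline is the standard conormal-variety proof and is essentially the argument given in the cited reference. The structure is correct: $\mathrm{CN}(X)$ is irreducible of dimension $n-1$, its two projections give $X$ and $X^\ast$, and the reflexivity identity $\tau(\mathrm{CN}(X))=\mathrm{CN}(X^\ast)$ immediately yields $(X^\ast)^\ast=X$. You have also correctly isolated the one nontrivial ingredient, generic smoothness of $\pi_2$ in characteristic zero, and the reason the statement can fail otherwise. The local coordinate computation you sketch---writing $X$ as a graph over $T_{x_0}X$ and differentiating the incidence and tangency relations---is the standard way to finish; it is genuinely short once set up, though in a fully written proof you would want to display the one-line differentiation that shows the tangent vectors to $X^\ast$ at $[H_0]$ annihilate $x_0$. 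Nothing in the sketch is wrong or missing at the level of ideas.
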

This theorem has several useful consequences like the following. 
\par
\begin{Rem}\label{rem:dense-C}
Let $X\subset \P^n$ be an irreducible projective variety. For all points $x$ 
of $X$ in a dense subset in the Euclidean topology of $X$, the point $x$ is 
regular, the hyperplane $\cV(x)\subset(\P^n)^\ast$ is tangent to $X^\ast$ at a 
regular point $\ell$, and the hyperplane $\cV(\ell)\subset\P^n$ is tangent to $X$ 
at $x$. This is an application of the conormal variety $CN(X)$, defined as the 
Zariski closure of
\[
  CN_0(X)\  =\ \{(x,[H])\in\P^n\times (\P^n)^\ast\colon x\in X_\reg, T_xX\subset H\}.
\]
The projection $\pi_1 \colon CN_0(X) \to X_{\rm reg}$ is the conormal bundle of $X$, 
which shows that $CN_0(X)$ is an irreducible and smooth variety. The biduality 
theorem is often proven as a consequence of the fact that $CN(X) = CN(X^\ast)$, see 
\cite[Chapter 1]{GKZ1994}. The biduality theorem implies that the subset
\[
  U\ =\ \{(x,[H])\in CN_0(X) \colon [H]\in (X^\ast)_\reg\}
\]
is a non-empty Zariski open subset of $CN(X)$. If $x\in X_{\rm reg}$ and
$[H]\in (X^\ast)_{\rm reg}$ are regular points, then $H$ is tangent to $X$ at $x$  
if and only if $\cV(x)$ is tangent to $X^\ast$ at $[H]$, see 
\cite[Thm.~1.7(b)]{Tevelev2005}. This shows 
\[
  U\ =\ CN_0(X) \cap CN_0(X^\ast).
\]
By definition, the right-hand side consists of pairs $(x,[H])$ of regular points 
$x\in X_{\rm reg}$ and $[H]\in (X^\ast)_{\rm reg}$ such that $\cV(x)$ is tangent to 
$X^\ast$ at $[H]$ and $H$ is tangent to $X$ at $x$. Since $U$ is dense in $CN(X)$ 
in the Euclidean topology \cite[Thm.~2.33]{Mumford1976}, the claim follows as the 
projection from $CN(X)$ to the first factor $X$ is continuous and surjective.
\end{Rem}
When passing to real points, the direct analogue of Remark \ref{rem:dense-C} fails: 
The set of regular real points $X_\reg(\R)$ of an irreducible projective variety
$X\subset \P^n$ may not be dense in $X(\R)$ with respect to the Euclidean topology, 
even if it is non-empty, see for example \cite[Section 3.1]{BCR} or 
\Cref{ex:ChienNakazato} below. This is addressed in the following remark.
\par
\begin{Rem}\label{rem:bidualitytangency}
Let $X\subset \P^n$ be an irreducible projective variety. For all real regular points 
$x$ of $X$ in a dense subset of $X_\reg(\R)$ in the Euclidean topology, the hyperplane 
$\cV(x)\subset(\P^n)^\ast$ is tangent to $X^\ast$ at a real regular point $\ell$ and 
the hyperplane $\cV(\ell)\subset\P^n$ is tangent to $X$ at $x$. 
\par
This claim is trivial if $X$ has no regular real points. We resume the discussion from 
\Cref{rem:dense-C} assuming that $X$ does contain a regular real point. The variety 
$CN_0(X)$ is smooth and contains real points, since $X$ contains smooth real points. 
Since $CN_0(X)\setminus CN_0(X^\ast)=CN_0(X)\setminus U$ is a 
Zariski closed proper subset 
relative to $CN_0(X)$, it is of lower dimension.
As $CN_0(X)(\R)$ is a real analytic manifold of dimension $\dim(CN_0(X))$, see \cite[Prop.~3.3.11]{BCR},
the set $U(\R)$ is dense in $CN_0(X)(\R)$ in the Euclidean topology.
This proves the claim, because the projection of $CN_0(X)(\R)$ onto the first factor 
$X$ is onto $X_\reg(\R)$. 
\end{Rem}
\begin{Def}\label{def:central-point}
We call a real point $x$ of an algebraic variety $X\subset \C^n$ 
\emph{central} if it is in the Euclidean closure of the set of regular and real 
points of $X$, i.e.~if $x$ is in the Euclidean closure of $X_\reg(\R)$. 
\end{Def}
\begin{Rem}\label{rem:central-pt-local-dim}
For an irreducible algebraic variety $X\subset\C^n$, a point $x\in X(\R)$ is central 
if and only if the local dimension of $x$ in $X(\R)$ is equal to $\dim(X)$, see 
\cite[Prop.~7.6.2]{BCR}.
\end{Rem}
%
%%%%%%%%%%%%%%%%%%%%%%%%%%%%%%%%%%%%%%%%%%%%%%%%%%%%%%%%%%%%%%%%%%%%%%%%%%%%
%%%%%%%%%%%%%%%%%%%%%%%%%%%%%%%%%%%%%%%%%%%%%%%%%%%%%%%%%%%%%%%%%%%%%%%%%%%%
%%%%%%%%%%%%%%%%%%%%%%%%%%%%%%%%%%%%%%%%%%%%%%%%%%%%%%%%%%%%%%%%%%%%%%%%%%%%
%%%%%%%%%%%%%%%%%%%%%%%%%%%%%%%%%%%%%%%%%%%%%%%%%%%%%%%%%%%%%%%%%%%%%%%%%%%%
%%%%%%%%%%%%%%%%%%%%%%%%%%%%%%%%%%%%%%%%%%%%%%%%%%%%%%%%%%%%%%%%%%%%%%%%%%%%
%
\section{Dual Hyperbolicity Cones}
\label{sec:duality-cones}
We discuss a result by the second author \cite{Sinn2015} more explicitly. The result 
is that the dual convex cone to a hyperbolicity cone is the convex cone generated by 
a particular semi-algebraic cone. This semi-algebraic cone is the Euclidean closure 
of the cone of regular real points on the dual variety to the hyperbolic hypersurface 
that lie in the right half-space. The algebraic boundary of the hyperbolicity cone 
allows us to simplify this semi-algebraic cone. We prove a stronger result for 
three-dimensional hyperbolicity cones.
\par
A homogeneous polynomial $p\in\R[x_0,x_1,\dots,x_n]$ of degree $d$ is called 
\emph{hyperbolic} with respect to a fixed point $e\in\R^{n+1}$ if 
$p(e)\neq 0$ and the polynomial $p(te-a)$ in one variable $t$ has only real 
roots for every point $a\in\R^{n+1}$. Without loss of generality, we fix the 
sign at $e$ and always assume $p(e)>0$. The roots of $p(te-a)$ are sometimes 
called the eigenvalues of $a$ with respect to $p$ and $e$, in analogy with 
characteristic polynomials of Hermitian matrices. Given any such polynomial 
$p$, the set
\[
C_e(p)\ =\ \bigl\{ a\in\R^{n+1} 
\colon \mbox{all roots of $p(te-a)$ are non-negative} \bigr\}
\]
is a closed convex cone called the \emph{(closed) hyperbolicity cone of $p$ with 
respect to $e$}, 
and $e$ is an interior point of $C_e(p)$,
see \cite{Renegar2006}. Our goal is to describe the dual convex cone
\[
C_e(p)^\vee\ =\ \bigl\{ \ell\in (\R^{n+1})^\ast 
\colon \mbox{$\ell(x)\geq 0$ for all $x\in C_e(p)$} \bigr\}.
\]
An essential technique is projective duality. A general approach is described in 
the paper \cite{Sinn2015} by the second author. The goal of this section is to 
explain this method more explicitly for the special case of hyperbolicity cones.
\par
An important example of a hyperbolic polynomial is the determinant of a matrix 
pencil, i.e. $p=\det(x_0A_0+\cdots+x_nA_n)$ for Hermitian $d\times d$ matrices 
$A_0,\dots,A_n\in H_d$, which is hyperbolic with respect to $e=(e_0,\dots,e_n)^T$ 
provided the matrix $e_0A_0+\cdots+e_nA_n$ is positive definite. In this case, 
$C_e(p)$ is the \emph{spectrahedral cone} defined by $A_0,\dots,A_n$, 
see \Cref{sec:PSD-cone}. However, the discussion in this section does not require 
such a determinantal representation and we consider general hyperbolic polynomials.
\par
The proof of our main result, \Cref{thm:dual-hyp-cone}, makes use of the following 
\Cref{lem:multiplicity} on hyperbolic polynomials. For the sake of completeness, we 
include a short proof based on the Helton-Vinnikov theorem on determinantal 
representations of hyperbolic curves; see \cite[Lemma~2.4]{PlaumannVinzant2013} for 
a direct proof of a special case.
\begin{Def}
Let $f\in\R[x_0,x_1,\dots,x_n]$ be a polynomial and $x\in\C^{n+1}$ be a point. 
The \emph{multiplicity} of $x$ on $\cV(f)\subset\C^{n+1}$ is the smallest degree 
of a non-zero homogeneous term in the Taylor expansion of $f$ around $x$.
\end{Def}
\begin{Lem}\label{lem:multiplicity}
Let $p\in\R[x_0,\dots,x_n]$ be hyperbolic with respect to $e$ and let $x\in\R^{n+1}$.
If $x$ has multiplicity $m$ on $\cV(p)$, the hyperbolic hypersurface defined by $p$, 
then $t=0$ is a root of multiplicity $m$ of $p(x+te)\in\R[t]$. Moreover, $t=0$ is 
also a root of multiplicity $m$ of $p(x+t(e-x))\in\R[t]$.
\end{Lem}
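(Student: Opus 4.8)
The plan is to reduce the multiplicity statement to a one-variable computation about the polynomial $t\mapsto p(x+te)$ and then bootstrap from the direction $e$ to the direction $e-x$. Write the Taylor expansion of $p$ around $x$ as $p(x+v)=\sum_{k\ge m} p_k(v)$, where $p_k$ is homogeneous of degree $k$ in $v$ and $p_m\not\equiv 0$ by the definition of multiplicity. Substituting $v=te$ gives $p(x+te)=\sum_{k\ge m} p_k(e)\,t^k$, so $t=0$ is a root of order $\ge m$, and it is exactly $m$ precisely when $p_m(e)\neq 0$. The first key step is therefore to show $p_m(e)\neq 0$. This is where hyperbolicity enters: I would invoke the Helton--Vinnikov theorem to write, after restricting to a generic plane through $x$ and $e$ (or directly in the plane case), $p|_{\text{plane}}=\det(z_0 M_0+z_1 M_1+z_2 M_2)$ with $e_0M_0+e_1M_1+e_2M_2\succ 0$; then the local multiplicity of $x$ on the curve equals the dimension of the kernel of the Hermitian matrix $M(x):=x_0M_0+x_1M_1+x_2M_2$, and the hyperbolicity/definiteness at $e$ forces the restriction of $p$ to the line $x+te$ to vanish to order exactly $\dim\ker M(x)=m$ at $t=0$ — this is the standard fact that along a pencil through a positive-definite point the vanishing order of the determinant at a real point equals the kernel dimension there, with no ``extra'' vanishing. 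One has to check that passing to a generic plane section through $x$ and $e$ preserves the multiplicity of $x$ and the hyperbolicity with respect to $e$; both are standard (a generic linear section preserves multiplicity of a fixed point, and restriction of a hyperbolic polynomial to a subspace through $e$ is again hyperbolic with respect to $e$).

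The second key step transfers the conclusion from the direction $e$ to the direction $e-x$. Set $v=t(e-x)$ in the Taylor expansion: $p(x+t(e-x))=\sum_{k\ge m} p_k(e-x)\,t^k$, so again $t=0$ is a root of order $\ge m$, with equality iff $p_m(e-x)\neq 0$. Now observe that the point $x+t(e-x)=(1-t)x+te$ traces the same projective line through $[x]$ and $[e]$ as $x+te$; more concretely, for $t\neq 1$ we have $(1-t)x+te=(1-t)\bigl(x+\tfrac{t}{1-t}e\bigr)$, so $p\bigl((1-t)x+te\bigr)=(1-t)^d\,p\bigl(x+\tfrac{t}{1-t}e\bigr)$ by homogeneity of $p$ of degree $d$. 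Writing $s=\tfrac{t}{1-t}$, which is an analytic change of variable near $t=0$ with $s=0\leftrightarrow t=0$ and nonzero derivative there, the order of vanishing at $t=0$ of the left side equals the order of vanishing at $s=0$ of $p(x+se)$, which we have just shown is $m$. Hence $p(x+t(e-x))$ vanishes to order exactly $m$ at $t=0$ as well. (Equivalently and more cleanly: $p_m(e-x)=p_m(e)$ because $p_m(x)=0$ — indeed $x$ has multiplicity $m\ge 1$ on $\cV(p)$ unless $p(x)\neq 0$, in which case $m=0$ and both statements are vacuous — combined with the fact that $p_m$ restricted to the line through $x$ and $e$ is, up to the nonvanishing factor coming from the reparametrization, the leading term; I would present whichever of these two arguments is shorter.)

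I expect the main obstacle to be the first step: making precise that the vanishing order of $t\mapsto p(x+te)$ at $t=0$ is \emph{exactly} $m$ and not larger. The inequality ``$\ge m$'' is free from the Taylor expansion, but the reverse inequality is genuinely where hyperbolicity is used, and it is exactly the content one borrows from \cite[Lemma~2.4]{PlaumannVinzant2013} or from the determinantal picture via Helton--Vinnikov. Concretely one must rule out the possibility that $p_m(e)=0$, i.e.\ that the lowest-degree form in the local expansion of $p$ at $x$ happens to vanish in the direction $e$; for a general polynomial this can certainly happen, so the argument must exploit that the real points of $\cV(p)$ near $x$ form (a cone over) nested ovals with $e$ in the innermost convex region, so that the line $x+te$ meets the local branch transversally in the appropriate multiplicity-counting sense. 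Once that is in hand, the passage to the direction $e-x$ is a routine homogeneity-plus-reparametrization argument with no further input from hyperbolicity, as sketched above.
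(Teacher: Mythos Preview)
Your approach is essentially the paper's: both restrict to a plane containing $x$ and $e$, invoke Helton--Vinnikov, and read off the order of vanishing along $x+te$ as $\dim\ker M(x)$. The paper is slightly more explicit at the one point where your sketch is thin. You assert ``the local multiplicity of $x$ on the curve equals $\dim\ker M(x)$'' and justify it by the standard fact that the vanishing order along a pencil through a positive-definite point equals the kernel dimension. But that standard fact only gives the inequality $m \le \dim\ker M(x)$ (since multiplicity is at most the order along any particular direction); the reverse inequality needs a separate argument. The paper handles this cleanly by choosing the third direction $y$ spanning the plane to be a \emph{generic} vector lying in the \emph{interior} of $C_e(p)$: genericity gives $p_m(y)\neq 0$, so the order along $x+sy$ is exactly $m$, while $y$ being interior makes $M(y)$ positive definite, so that same order equals $\dim\ker M(x)$. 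This pins down $\dim\ker M(x)=m$ in one stroke, after which positive-definiteness of $M(e)$ finishes as you say. Your ``generic plane preserves multiplicity'' is the right idea but does not by itself close this gap without the extra observation that the generic third direction can be taken inside the hyperbolicity cone.

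For the second claim your reparametrization $s=t/(1-t)$ combined with homogeneity is correct and arguably cleaner than the paper's version, which first uses hyperbolicity to factor $p(x+te)/t^m$ into real linear factors and then performs the substitution $s\mapsto s-t$ in the homogenization; the factorization is not actually needed. Your parenthetical alternative, however, is wrong: $p_m$ is homogeneous of degree $m$, not linear, so ``$p_m(x)=0\Rightarrow p_m(e-x)=p_m(e)$'' fails for $m\ge 2$ (e.g.\ $p_m(v)=v_1v_2$, $x=(1,0)$, $e=(1,1)$ gives $p_m(e-x)=0\neq 1=p_m(e)$). Delete that sentence and keep the reparametrization argument.
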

\begin{proof}
If the multiplicity of $x$ on $\cV(p)$ is $m$, then 
$\frac{\partial^m}{\partial s^m}p(x+sy)|_{s=0}\neq 0$ for generic $y\in\R^{n+1}$. 
Fix such $y$ in the interior of $C_e(p)$ and consider the hyperbolic polynomial 
$p(rx+sy+te)$ in three variables $r,s,t$. By the Helton-Vinnikov theorem 
(\cite[Thm.~2.2]{HeltonVinnikov2007}), this polynomial has a determinantal 
representation
\[
  p(rx+sy+te) = \det(rA + sB + tC)
\]
with real symmetric matrices $A,B,C$, where $B$ and $C$ are positive definite, hence 
factor as $B=UU^T$ and $C=VV^T$, with $U$ and $V$ invertible. Now $s=0$ is a root of 
$p(x+sy)=\det(A+sB)$ of multiplicity $m$, which means that $U^{-1}A(U^T)^{-1}$ has 
$m$-dimensional kernel. But then so does $V^{-1}A(V^T)^{-1}$, hence the root $t=0$ of 
$p(x+te)=\det(A+tC)$ has multiplicity $m$ as well.
\par
The second part of the claim follows from the part we have just proved. Indeed, write 
$f(s,t) = p(sx + te)\in \R[s,t]$ which is the homogenization of $p(x+te)$ because 
$p(e)\neq 0$. Since $t = 0$ is a root of multiplicity $m$ of $p(x+te)$, we can write 
$p(x + te) = t^mq(t)$ with $q(0)\neq 0$. Since $p$ is hyperbolic with respect to $e$, 
the polynomial $q$ factors into linear terms over $\R$, say 
$q = c (1- \lambda_1 t)\cdot\ldots\cdot (1-\lambda_{d-m}t)$ for some nonzero $c\in\R$. 
Here we used $q(0)\neq 0$ because the $\lambda_i$ are the reciprocals of the roots of 
$q$. So we get that $f(s,t) = c \cdot t^m \prod (s-\lambda_i t)$. The polynomial 
$p(sx+t(e-x))$ is the same as $f(s-t,t) = c \cdot t^m \prod (s- (\lambda_i + 1)t)$. 
Dehomogenizing this again shows that $t=0$ is a root of multiplicity $m$ of 
$p(x+t(e-x))$.
\end{proof}
\begin{Cor}\label{Cor:multiplicity}
If $x$ is a regular real point of a hyperbolic hypersurface $\cV(p)$, then the line incident with $x$ and the hyperbolicity direction $e$ is not tangent to $\cV(p)$ at 
$x$, i.e.~is not contained in $T_x\bigl(\cV(p)\bigr)$.
\end{Cor}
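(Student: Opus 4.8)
The plan is to deduce the corollary directly from \Cref{lem:multiplicity}. Since $x$ is a regular point of $\cV(p)$, its multiplicity on $\cV(p)$ is $m=1$. Hence \Cref{lem:multiplicity} tells us that $t=0$ is a simple root of $p(x+te)\in\R[t]$, which is to say the directional derivative
\[
\frac{\mathrm{d}}{\mathrm{d}t}\,p(x+te)\Big|_{t=0}
\ =\ \sum_{i=0}^n \frac{\partial p}{\partial x_i}(x)\, e_i
\]
is nonzero. (Note that $x\neq e$ as points of $\P^n$, because $p(x)=0$ while $p(e)\neq 0$, so the line incident with $x$ and $e$ is well defined.)

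Next I would translate the tangency condition. Since $x$ is regular, the embedded projective tangent space $T_x\bigl(\cV(p)\bigr)$ is the hyperplane $\bigl\{y\in\P^n \colon \sum_{i=0}^n \tfrac{\partial p}{\partial x_i}(x)\, y_i = 0\bigr\}$. By Euler's identity for the homogeneous polynomial $p$ of degree $d$ we have $\sum_i \tfrac{\partial p}{\partial x_i}(x)\, x_i = d\, p(x) = 0$, so $x$ lies on this hyperplane, as it must. The projective line joining $x$ and $e$ is $\{[sx+te]\}$, and it is contained in $T_x\bigl(\cV(p)\bigr)$ if and only if $e$ lies on that hyperplane, i.e.\ if and only if $\sum_i \tfrac{\partial p}{\partial x_i}(x)\, e_i = 0$. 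The previous step shows this sum is nonzero, so the line is not contained in $T_x\bigl(\cV(p)\bigr)$, which is exactly the assertion. (Alternatively, one may argue affinely from the second part of \Cref{lem:multiplicity}: with $m=1$, $t=0$ is a simple root of $p(x+t(e-x))$, so $\nabla p(x)\cdot(e-x)\neq 0$, and $e-x$ spans the direction of the line through $x$ and $e$.)

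There is no real obstacle here: all the content sits in \Cref{lem:multiplicity}, and the only thing to verify is the routine bookkeeping that identifies ``$t=0$ is a simple root of $p(x+te)$'' with ``$e$ does not lie on the tangent hyperplane $T_x\bigl(\cV(p)\bigr)$'', together with the observation that $x$ regular forces $m=1$.
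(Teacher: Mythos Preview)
Your proof is correct and follows essentially the same approach as the paper: both invoke \Cref{lem:multiplicity} with $m=1$ to conclude that $t=0$ is a simple root of $p(x+te)$, which translates into $\nabla p(x)\cdot e\neq 0$ and hence $e\notin T_x(\cV(p))$. The paper phrases this more tersely via intersection multiplicities (tangency would force the multiplicity of $x$ in $L\cap\cV(p)$ to exceed that of $x$ on $\cV(p)$), but the content is identical to your explicit gradient computation.
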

\begin{proof}
If a line $L$ is tangent to $\cV(p)$ at $x$, then the multiplicity of $x$ in 
$L\cap \cV(p)$ is greater than the multiplicity of $x$ in $\cV(p)$. This is impossible 
if $e\in L$, by the previous \Cref{lem:multiplicity}.
\end{proof}
%
% \Stephan{K\"onnten wir bitte ein Korollar von \Cref{lem:multiplicity} f\"ur singul\"are 
% Punkte formulieren und beweisen, so wie wir das im Beweis von \Cref{thm:dim3} verwenden?
% Im Wesentlichen meine ich das Folgende: 
% Let $x_i$ be a sequence of smooth points of $\cV(p)$ that 
% converges to a point $x$ on $\cV(p)$. Let $\ell_i\in T_{x_i}\cV(p)$ be a sequence of
% tangent vectors that converges to $\ell\neq 0$ in $(\C^{n+1})^\ast$. Then $\ell$ 
% is tangent to $\cV(p)$ in the sense that the multiplicity of zero in the polynomial 
% $p(x+t\ell)$ of one variable $t$ is strictly smaller than the multiplicity $x$ in 
% $\cV(p)$. Oder so? Stimmt das?} \Rainer{Wir erklären das in unserem Paragraphen in der Einleitung und geben ein entsprechendes Argument bzw. Referenz im Beweis unten. Das finde ich ausreichend.}
% \par
%
% \Stephan{Sollten wir evtl.~vorher erkl\"aren, dass wir $\cV(p)$ als affinen Kegel 
% auffassen? Das steht jetzt oberhalb von \Cref{thm:dual-hyp-cone}.}
% \par
%
We prove basics from differential and convex geometry regarding hyperbolicity cones.
\par
\begin{Lem}\label{lem:some-hyper}
Let $p\in\R[x_0,\dots,x_n]$ be an irreducible hyperbolic polynomial with respect to 
$e\in\R^{n+1}$. Then $M=\partial C_e(p)\cap\cV(p)_\reg$ is an $n$-dimensional real 
analytic manifold, which is open and dense in the Euclidean boundary $\partial C_e(p)$ 
of $C_e(p)$ in the Euclidean topology. The (inner) normal cone to $C_e(p)$ at any 
point $x\in M$ is the ray $\R_+\ell$, where 
$\ell=\nabla p(x)^T\in C_e(p)^\vee\subset(\R^{n+1})^\ast$. 
\end{Lem}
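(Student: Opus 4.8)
The plan is to establish the three assertions in order: that $M=\partial C_e(p)\cap\cV(p)_{\reg}$ is an $n$-dimensional real analytic manifold, that it is open and dense in $\partial C_e(p)$, and finally that the normal cone at each point of $M$ is the gradient ray. For the first claim, I would start from the fact that $\cV(p)_{\reg}$ is a smooth complex hypersurface, and its real points $\cV(p)_{\reg}(\R)$ form a real analytic manifold of dimension $n$ wherever they are nonempty (this uses the implicit function theorem: at a real point $x$ with $\nabla p(x)\neq 0$, the real gradient is nonzero because $p$ has real coefficients, so $\{p=0\}$ is locally a real analytic graph). The point is then that $M$ is an open subset of this manifold: $M$ is cut out inside $\cV(p)_{\reg}(\R)$ by the conditions defining the boundary of the hyperbolicity cone, and I would argue that near any $x\in M$, the set $\partial C_e(p)$ coincides locally with $\cV(p)(\R)$, so that $M$ is relatively open in $\cV(p)_{\reg}(\R)$ and hence an $n$-manifold.

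For the density and openness in $\partial C_e(p)$, recall from Renegar's work \cite{Renegar2006} (cited in the excerpt) that the Euclidean boundary $\partial C_e(p)$ is contained in $\cV(p)(\R)$; indeed a point $x$ lies on $\partial C_e(p)$ exactly when all eigenvalues of $x$ with respect to $p,e$ are nonnegative and at least one vanishes, i.e.\ $p(x)=0$. Conversely any point of $C_e(p)$ on $\cV(p)$ is a boundary point since $e$ is interior and $p(e)>0$. Thus $\partial C_e(p)\subset\cV(p)(\R)$, and $M$ is obtained by removing the singular locus $\cV(p)_{\sing}$, a proper Zariski-closed subset of $\cV(p)$, hence of real dimension at most $n-1$. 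Since $\partial C_e(p)$ is a topological $(n)$-manifold boundary locally of full dimension $n$ near central points, removing a set of lower dimension leaves a dense open subset; more carefully, I would invoke that $\cV(p)(\R)$, being the real points of a hypersurface, has local dimension $n$ at every point of $\partial C_e(p)$ (no point of the boundary can be isolated in $\cV(p)(\R)$ since it is a limit of interior points of the cone whose boundary it bounds), so the complement of the lower-dimensional singular locus is dense. Openness of $M$ in $\partial C_e(p)$ follows because $\cV(p)_{\reg}$ is Zariski-open in $\cV(p)$.

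For the normal cone computation, fix $x\in M$. The supporting hyperplanes to $C_e(p)$ at $x$ are exactly the normal cone $N_{C_e(p)}(x)$. Since $\partial C_e(p)$ is locally the smooth hypersurface $\cV(p)(\R)$ near $x$, with a well-defined tangent hyperplane $T_x\cV(p)=\ker\nabla p(x)$, there is at most a one-dimensional space of supporting linear functionals, namely multiples of $\ell=\nabla p(x)^T$; so $N_{C_e(p)}(x)\subseteq\R\ell$. To pin down that it is the ray $\R_+\ell$ (and in particular nonzero, so that it is genuinely a supporting hyperplane), I would use that $C_e(p)$ lies on one side of $T_x\cV(p)$: the hyperbolicity direction $e$ satisfies $p(e)>0$, and by \Cref{Cor:multiplicity} the line through $x$ and $e$ meets $\cV(p)$ transversally at $x$, so $e\notin T_x\cV(p)$, i.e.\ $\nabla p(x)(e)\neq 0$. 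Choosing the sign of $\ell$ so that $\ell(e)>0$ and using that $e$ is an interior point of $C_e(p)$, one checks $\ell\in C_e(p)^\vee$ and $\ell$ is the (up to positive scaling) unique such functional vanishing at $x$, which by \eqref{eq:nc-x} is exactly $N_{C_e(p)}(x)$. The main obstacle I anticipate is the careful justification that $\partial C_e(p)$ really is locally the full smooth hypersurface $\cV(p)(\R)$ near points of $M$ — one must rule out that the hyperbolicity cone only touches a lower-dimensional real locus of $\cV(p)$ there — and this is where one leans on the interlacing/eigenvalue description of $C_e(p)$ together with $x$ being a regular point.
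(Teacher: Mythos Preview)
Your overall strategy matches the paper's: use $\partial C_e(p)\subset\cV(p)(\R)$, remove the singular locus of dimension $\leq n-1$ to get density and openness, identify $M$ as an open piece of the real analytic manifold $\cV(p)_{\reg}(\R)$, and then argue that the normal cone is contained in $\R\ell$ and pick out the correct ray. The paper also uses \Cref{lem:multiplicity} (equivalently your \Cref{Cor:multiplicity}) to get $\ell(e)\neq 0$.

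There is, however, a genuine gap in your last step. The lemma asserts that the inner normal cone is $\R_+\ell$ for the \emph{specific} functional $\ell=\nabla p(x)^T$, not for one of $\pm\nabla p(x)^T$. Your phrase ``choosing the sign of $\ell$ so that $\ell(e)>0$'' is therefore illegitimate: $\ell$ is already fixed, and you must prove $\nabla p(x)^T(e)>0$, not merely $\neq 0$. The paper closes this gap by recognizing $\nabla p(x)^T(e)=p'(x)$, where $p'(y)=\tfrac{\partial}{\partial t}p(y+te)|_{t=0}$ is Renegar's directional derivative; since $p'$ is hyperbolic with respect to $e$ and $C_e(p)\subset C_e(p')$ \cite{Renegar2006}, one has $p'(x)\geq 0$, which combined with $\ell(e)\neq 0$ gives $\ell(e)>0$. (Equivalently, one can argue directly that $x+te$ lies in the interior of $C_e(p)$ for $t>0$, whence $p(x+te)>0$, so the $t$-derivative at $0$ is nonnegative.) Once $\ell(e)>0$ is established, your argument via $N_{C_e(p)}(x)\subset C_e(p)^\vee$ and the fact that nonzero elements of $C_e(p)^\vee$ are strictly positive on the interior point $e$ correctly rules out $N(x)=-\R_+\ell$. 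Your density discussion is also slightly roundabout: what is needed is that $\partial C_e(p)$ itself has local dimension $n$ everywhere (a standard fact for boundaries of convex sets with nonempty interior), so that the $\leq(n-1)$-dimensional set $\partial C_e(p)\cap\cV(p)_{\sing}$ is nowhere dense in it; the detour through local dimension of $\cV(p)(\R)$ is unnecessary.
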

\begin{proof}
Since $\partial C_e(p)\subset \cV(p)(\R)$ and since the set of singular points of 
$\cV(p)(\R)$ is a variety of dimension at most $n-1$, see \cite[Prop.~3.3.14]{BCR},
the complement $M=\partial C_e(p)\cap\cV(p)_\reg$ is open and dense in $\partial C_e(p)$ 
in the Euclidean topology. As $\cV(p)_\reg(\R)$ is an analytic 
manifold of dimension $n$, see \cite[Prop.~3.3.11]{BCR}, and as the eigenvalues 
depend continuously on $x\in\R^{n+1}$, see \cite{Renegar2006}, the set $M$ is 
an analytic manifold of dimension $n$.
\par
Let $x\in M$. As $x$ is a regular point of $\cV(p)$, the functional 
$\ell=\nabla p(x)^T$ is non-zero. Hence, \Cref{lem:multiplicity}, case $m=1$, shows 
$\ell(e)\neq 0$. Since $M$ is an $n$-dimensional analytic manifold included in 
$C_e(p)$, the normal cone $N(x)$ of $C_e(p)$ at $x$ is a subset of the line $\R\ell$, 
hence $N(x)=\R_+\ell$ or $N(x)=-\R_+\ell$ as $x\in\partial C_e(p)$. The derivative 
polynomial $p'\in\R[x_0,\dots,x_n]$, defined by
\[
p'(y)=\tfrac{\partial}{\partial t}p(y+te)|_{t=0}=\nabla p(y)^Te,
\qquad y\in\R^{n+1},
\]
is hyperbolic with respect to $e$ and $C_e(p)\subset C_e(p')$ holds, see 
\cite{Renegar2006}. This proves $\ell(e)=p'(x)>0$ and rules out $N(x)=-\R_+\ell$, 
as $N(x)\subset C_e(p)^\vee$ by \Cref{eq:nc-x}. This proves the claim.
\end{proof}
The second author obtained \Cref{thm:dual-hyp-cone} and 
\Cref{cor:dual-hyp-cone-bound} below in \cite[Example~3.15]{Sinn2015}. 
We slightly abuse notation in the following statements. If $X\subset\C^{n+1}$ is an 
algebraic cone, then we write $X^\ast\subset (\C^{n+1})^\ast$ for the affine cone over 
the projective dual variety $\P(X)^\ast$ of $\P(X)$. Let $H_{e,+}\subset(\R^{n+1})^\ast$ 
denote the half-space $H_{e,+} = \{\ell\in (\R^{n+1})^\ast\colon \ell(e)\geq 0\}$ for 
nonzero $e\in\R^{n+1}$. 
\par
\begin{Thm}[Sinn \cite{Sinn2015}]\label{thm:dual-hyp-cone}
Let $p\in\R[x_0,x_1,\dots,x_n]$ be an irreducible hyperbolic polynomial with respect 
to $e\in\R^{n+1}$. Then we have
\[
C_e(p)^\vee\ 
=\  \cl \Big( \co \big( (\cV(p)^\ast)_\reg(\R) \cap H_{e,+} \big) \Big).
\]
\end{Thm}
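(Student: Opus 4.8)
The plan is to establish the two inclusions separately, using biduality for convex cones (\Cref{thm:biduality-cone}) to reduce the harder direction to a dual statement. Write $D=\cl(\co((\cV(p)^\ast)_\reg(\R)\cap H_{e,+}))$, which is a closed convex cone by construction. Since $C_e(p)$ is a closed convex cone, \Cref{thm:biduality-cone} gives $C_e(p)=(C_e(p)^\vee)^\vee$, so it suffices to show $C_e(p)^\vee=D$, and for this it is enough to show $C_e(p)=D^\vee$ together with $D\subseteq C_e(p)^\vee$; the latter inclusion then upgrades to equality after dualizing twice. So the two things to prove are: (i) $D\subseteq C_e(p)^\vee$, i.e.~every regular real point $\ell$ of $\cV(p)^\ast$ with $\ell(e)\ge 0$ satisfies $\ell(x)\ge 0$ for all $x\in C_e(p)$; and (ii) $C_e(p)\supseteq D^\vee$, i.e.~every $x$ with $\ell(x)\ge 0$ for all such $\ell$ lies in $C_e(p)$ (the reverse inclusion $C_e(p)\subseteq D^\vee$ follows from (i) by taking duals).

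For inclusion (i), I would take a regular real point $\ell\in(\cV(p)^\ast)_\reg(\R)$ with $\ell(e)\ge 0$. By biduality of projective varieties (\Cref{thm:biduality}) and \Cref{rem:bidualitytangency} applied to $X=\cV(p)^\ast$, for $\ell$ in a dense subset of $(\cV(p)^\ast)_\reg(\R)$ the hyperplane $\cV(\ell)$ is tangent to $(\cV(p)^\ast)^\ast=\cV(p)$ at some real regular point $x$, and conversely $\cV(x)$ is tangent to $\cV(p)^\ast$ at $\ell$; by continuity and the closure in the definition of $D$ it suffices to handle such $\ell$. For such a pair, \Cref{lem:some-hyper} identifies the normal cone to $C_e(p)$ at $x$ as $\R_+\nabla p(x)^T$, which is exactly $\R_+\ell$ (up to scaling, since the tangent hyperplane to $\cV(p)$ at the regular point $x$ is cut out by $\nabla p(x)$); thus $\ell$ is a nonnegative multiple of a supporting functional of $C_e(p)$, provided the sign is right. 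The sign is forced by $\ell(e)\ge 0$: \Cref{lem:some-hyper} shows $\nabla p(x)^T(e)=p'(x)>0$, so the half-space condition $\ell(e)\ge 0$ selects precisely the inner normal direction, giving $\ell\in N_{C_e(p)}(x)\subseteq C_e(p)^\vee$.

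For inclusion (ii), i.e.~$D^\vee\subseteq C_e(p)$, I would argue by contraposition using the face/normal-cone machinery from \Cref{sec:conv-geo}. Suppose $x\notin C_e(p)$. By the separation theorem there is $\ell\in C_e(p)^\vee$ with $\ell(x)<0$; I want to replace $\ell$ by a functional in $D$ without destroying this strict inequality. The exposed faces of $C_e(p)^\vee$ are the normal cones $N_{C_e(p)}(F)$, and by \Cref{lem:lattice-iso} these correspond antitonically to the exposed faces of $C_e(p)$; pushing $\ell$ to lie in a minimal exposed face of $C_e(p)^\vee$ meeting the open half-space $\{\ell(x)<0\}$, one lands on an extreme ray $\R_+\ell_0$ of $C_e(p)^\vee$ with $\ell_0(x)<0$. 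An extreme ray of $C_e(p)^\vee$ is (dually) a facet-type exposed face of $C_e(p)$, generated by an inner normal at a smooth boundary point, i.e.~$\ell_0\in N_{C_e(p)}(y)=\R_+\nabla p(y)^T$ for some $y\in M=\partial C_e(p)\cap\cV(p)_\reg$, using \Cref{lem:some-hyper}; moreover $\nabla p(y)^T\in(\cV(p)^\ast)_\reg(\R)$ for $y$ in a dense subset of $M$ by \Cref{rem:bidualitytangency} (regular points of $\cV(p)$ map to regular points of the dual), and $\nabla p(y)^T(e)=p'(y)>0$ puts it in $H_{e,+}$, hence $\ell_0\in D$. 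Then $\ell_0(x)<0$ contradicts $x\in D^\vee$. The main obstacle I anticipate is the density/limiting bookkeeping: extreme rays of $C_e(p)^\vee$ need not correspond to \emph{regular} points of $\cV(p)$ (a facet of $C_e(p)$ could sit on the singular locus), so one must argue that every exposed functional of $C_e(p)$ is a Euclidean limit of gradients $\nabla p(y)^T$ with $y\in M$ — this is precisely where the genuinely real-algebraic input (centrality, \Cref{rem:central-pt-local-dim}, and density of $M$ in $\partial C_e(p)$ from \Cref{lem:some-hyper}) enters, and getting the closure operations in the definition of $D$ to absorb these limits cleanly is the delicate part. It is also the step where irreducibility of $p$ is used, so that $\cV(p)^\ast$ is irreducible and \Cref{rem:bidualitytangency} applies directly.
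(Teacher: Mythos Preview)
Your overall strategy matches the paper's, and your reformulation of the second inclusion via $D^\vee\subseteq C_e(p)$ is logically equivalent to the paper's direct attack on exposed rays of $C_e(p)^\vee$. But there is a genuine gap in your argument for inclusion~(i).

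You obtain, for a dense set of $\ell\in(\cV(p)^\ast)_\reg(\R)$, a real regular point $x\in\cV(p)_\reg(\R)$ at which $\cV(\ell)$ is tangent, and then invoke \Cref{lem:some-hyper} to conclude that $\R_+\ell=\R_+\nabla p(x)^T$ is the inner normal cone to $C_e(p)$ at $x$, hence lies in $C_e(p)^\vee$. But \Cref{lem:some-hyper} applies only to points $x\in M=\partial C_e(p)\cap\cV(p)_\reg$, and nothing in your setup forces $x$ to lie on the boundary of the hyperbolicity cone: the real locus of $\cV(p)$ consists of several nested sheets, and only the innermost one is $\partial C_e(p)$. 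For $x$ on an outer sheet the normal-cone statement is meaningless, and the sign claim $p'(x)>0$ (which in \Cref{lem:some-hyper} uses $x\in C_e(p)\subset C_e(p')$) can fail. The paper closes this gap with a different argument that works for \emph{every} regular real $x\in\cV(p)$: by \Cref{lem:multiplicity} (case $m=1$) applied with any interior point $e'$ of $C_e(p)$ as hyperbolicity direction, one gets $\nabla p(x)\cdot e'\neq 0$, so the tangent hyperplane $\cV(\ell)$ misses $\intr C_e(p)$ entirely; hence $\ell$ has constant sign on $C_e(p)$, and $\ell(e)\ge 0$ forces $\ell\in C_e(p)^\vee$.

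For inclusion~(ii) your plan is essentially the paper's. One remark: your sentence ``an extreme ray of $C_e(p)^\vee$ is (dually) a facet-type exposed face of $C_e(p)$, generated by an inner normal at a smooth boundary point, i.e.\ $\ell_0\in\R_+\nabla p(y)^T$ for some $y\in M$'' overstates what is immediately available --- the exposed face $F_{\ell_0}$ of $C_e(p)$ need not contain any regular point of $\cV(p)$. The paper handles exactly this by taking a relative-interior point $x$ of $F_{\ell_0}$, approximating $x$ by $y_j\in M$ (density from \Cref{lem:some-hyper}), perturbing within $M$ via \Cref{rem:bidualitytangency} so that $\ell_j=\nabla p(y_j)^T\in(\cV(p)^\ast)_\reg(\R)$, and passing to a subsequential limit $\ell'$; the identification $\R_+\ell'=\R_+\ell_0$ then comes from \Cref{lem:lattice-iso} and \Cref{eq:nc-x}, and Straszewicz's theorem upgrades from exposed to extreme rays. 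Your final paragraph correctly anticipates this, so here the issue is only that the sketch glosses over a step that genuinely requires the limiting argument.
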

\begin{proof}
We prove the inclusion ``$\supset$''. Since $C_e(p)^\vee$ is a closed convex cone, it 
is enough to show that $S=(\cV(p)^\ast)_\reg(\R) \cap H_{e,+}$ is contained in $C_e(p)^\vee$. 
By \Cref{rem:bidualitytangency}, it is enough to prove $\ell\in C_e(p)^\vee$ for all 
$\ell\in S$ such that the hyperplane $\cV(\ell)$ is tangent to $\cV(p)$ at a regular 
real point $x$, i.e.~$\ell = \nabla p(x)^T\in (\R^{n+1})^\ast$. By 
\Cref{lem:multiplicity} above, case $m=1$, we have 
$0\neq\frac{\partial}{\partial t}p(x+te)|_{t=0} = \ell(e)$ for such an $\ell$ and 
hence $e\not\in\cV(\ell)$. As $p$ is hyperbolic with respect to every interior point 
of the hyperbolicity cone \cite{Renegar2006}, it follows that the interior of $C_e(p)$ 
is disjoint from $\cV(\ell)$. This means that $\ell$ has constant sign on $C_e(p)$. 
Since $\ell\in H_{e,+}$, it follows that $\ell \in C_e(p)^\vee$. 
\par
The inclusion ``$\subset$'' follows from \cite[Coro.~3.14]{Sinn2015}. We repeat the 
argument, adapted to our setup, for completeness. Since $C_e(p)^\vee$ is the convex 
cone generated by its extreme rays and the right-hand side is a convex cone, it 
suffices to prove that every extreme ray of $C_e(p)^\vee$ is contained in the 
right-hand side. By Straszewicz's Theorem \cite[Thm.~18.6]{Rockafellar1970}, which 
says that every extreme ray is a limit of exposed rays, it suffices to prove the claim 
for every exposed ray of $C_e(p)^\vee$, because the right-hand side is closed. 
\par
Let $\R_+ \ell$ be an exposed ray of the convex cone $C_e(p)^\vee$. It is enough to 
show that $\ell$ is a central point of $\cV(p)^\ast(\R)$, i.e.~$\ell$ lies in the 
(Euclidean) closure of $(\cV(p)^\ast)_{\rm reg}(\R)$. Let 
$F_\ell = \{x\in C_e(p)\colon \ell(x) = 0\}$ be the exposed face of $C_e(p)$ 
corresponding to $\ell$ and let $x$ be a point in the relative interior of $F_\ell$. 
As proven in \Cref{lem:some-hyper}, the analytic manifold 
$M=\partial C_e(p)\cap\cV(p)_\reg$ is dense in the Euclidean boundary $\partial C_e(p)$ 
of the hyperbolicity cone $C_e(p)$. Hence, there is a sequence $(y_j)\subset M$ 
converging to $x$. \Cref{rem:bidualitytangency} shows that after slightly moving the 
members of the sequence $(y_j)$ within $M$ without changing the limit $x$, the 
hyperplane $\cV(y_j)$ is tangent to $\cV(p)^\ast$ at a regular real point 
$\ell_j\in(\cV(p)^\ast)_\reg(\R)$ and $\cV(\ell_j)$ is tangent to $\cV(p)$ at $y_j$ 
for all $j\in\N$. 
\par
After scaling $\ell_j$ with a nonzero real number, we have $\ell_j=\nabla p(y_j)^T$. 
Hence, the ray $\R_+\ell_j$ lies in the dual convex cone $C_e(p)^\vee$ by 
\Cref{lem:some-hyper} for all $j\in\N$. After normalizing and passing to a subsequence, 
the sequence $(\ell_j)$ converges to a point $\ell'$ in the compact unit sphere of 
$(\R^{n+1})^\ast$. We have $\ell'\in C_e(p)^\vee$ and $\ell'(x) = 0$, the latter as 
\begin{align*}
\ell'(x)
= \ell_j (x-y_j) + (\ell'-\ell_j) y_j + (\ell'-\ell_j) (x-y_j)
\end{align*}
holds for all $j\in\N$ and since $\ell_j\to\ell'$ and $y_j\to x$ as $j\to\infty$.
Since $\R_+ \ell$ is an exposed ray of $C_e(p)^\vee$, the lattice isomorphism of
\Cref{lem:lattice-iso} and \Cref{eq:nc-x} show
$\R_+ \ell=F_\ell^\perp\cap C_e(p)^\vee=x^\perp\cap C_e(p)^\vee$.
This proves $\R_+ \ell=\R_+ \ell'$. Hence $\ell$ is a central point of 
$\cV(p)^\ast(\R)$.
\end{proof}
In other words, \Cref{thm:dual-hyp-cone} says that the dual convex cone to the 
hyperbolicity cone $C_e(p)$ is the closed convex cone generated by the regular 
real points of the dual variety $\cV(p)^\ast$ lying in the appropriate half-space 
$H_{e,+}$.
\par
\begin{figure}[ht!]%
a)\includegraphics[height=6cm]{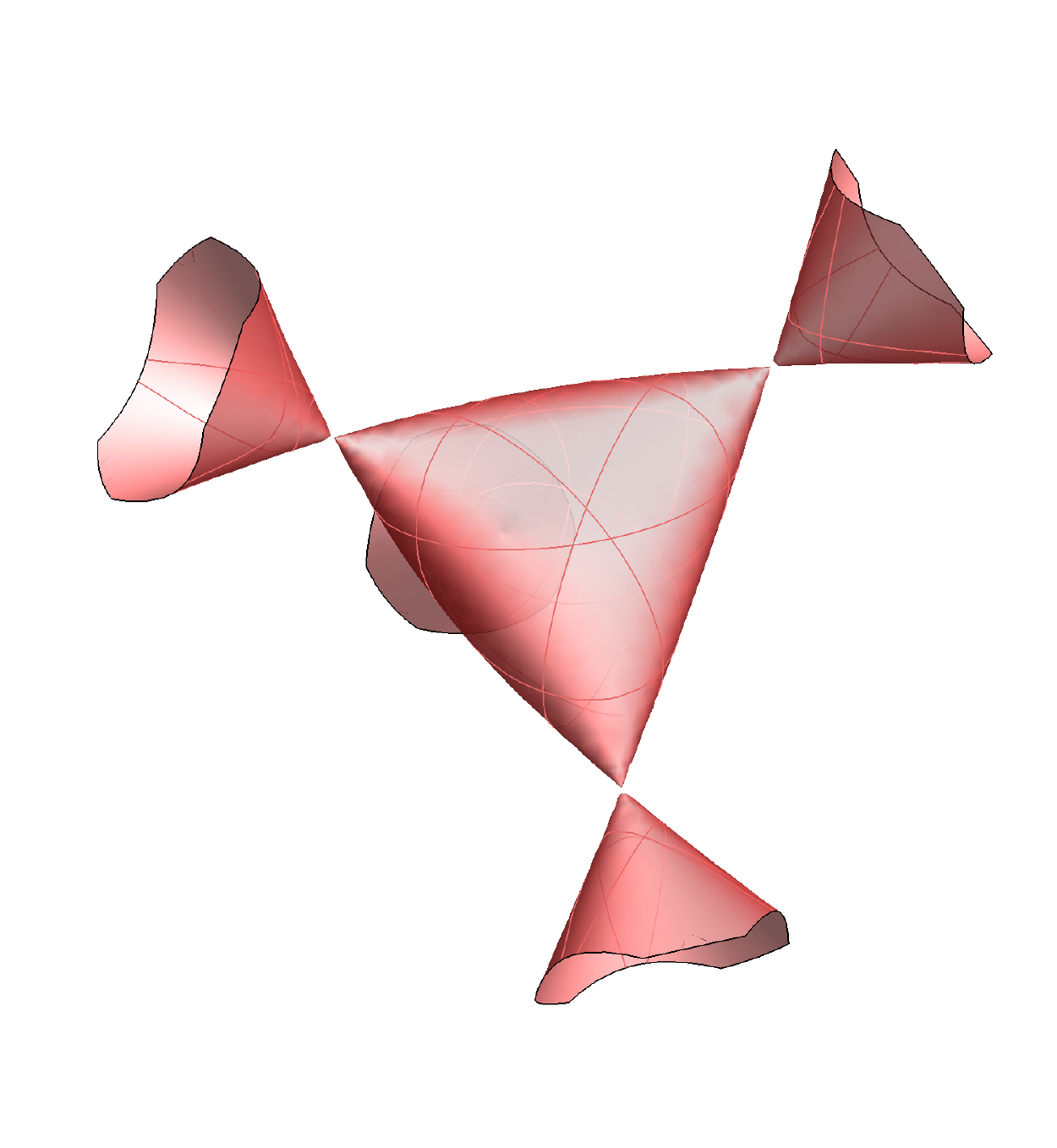}%
b)\includegraphics[height=6cm]{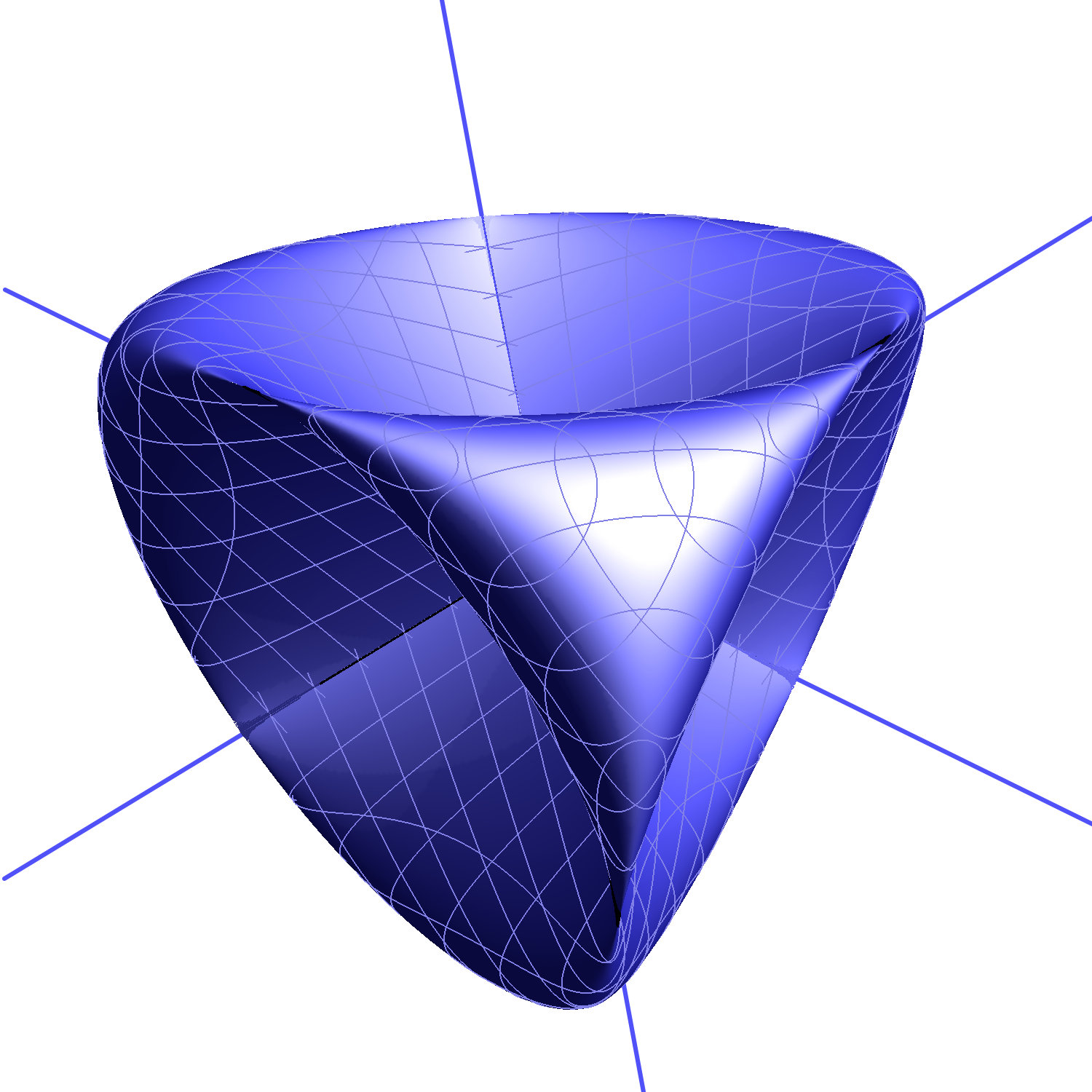}%
\caption{%
a) Cayley cubic.
b) Steiner surface with three singular lines.}
\label{fig:steiner}
\end{figure}
The well-known Steiner surface explains why singular points of the 
dual variety $\cV(p)^\ast$ have to be excluded from the statement of 
\Cref{thm:dual-hyp-cone}.
\par
\begin{Exm}\label{ex:Cayley}
  The Cayley cubic is the cubic hypersurface in $\P^3$ defined by the polynomial
  \[
 p=\det
    \begin{pmatrix}
      x_0 & x_1 & x_3 \\
      x_1 & x_0 & x_2 \\
      x_3 & x_2 & x_0
    \end{pmatrix}.
  \]
This polynomial is irreducible and hyperbolic with respect to the point $(1,0,0,0)$ 
in $\R^4 = \{(x_0,x_1,x_2,x_3)\}$. Its hyperbolicity cone is the homogenization of the 
elliptope $\mathcal{E}_3$, which is the feasible set of the Goemans-Williamson 
semidefinite relaxation of the MAX-CUT problem (for graphs with three vertices). 
\par
The dual convex cone is the closed convex cone generated by the regular real points 
with $y_0 > 0 $ on the Steiner surface given by the equation
  \[
    q=y_1^2y_2^2 + y_1^2y_3^2 + y_2^2y_3^2 - 2 y_0y_1y_2y_3.
  \]
The singular locus of this quartic surface is the union of three real lines in 
$(\P^3)^\ast$, which are not contained in the dual convex cone. 
See \Cref{fig:steiner}, where we draw the real affine parts of the varieties 
$\cV(p)$ and $\cV(q)$, that is to say, the set of points $(x_1,x_2,x_3)^T\in\R^3$ 
for which $(1:x_1:x_2:x_3)$ lies in $\cV(p)$ in drawing a) and the set of points 
$(y_1,y_2,y_3)\in(\R^3)^\ast$ for which $(1:y_1:y_2:y_3)$ lies in $\cV(q)$ in 
drawing b).
\end{Exm}
Convex geometry suffices to generalize \Cref{thm:dual-hyp-cone} from irreducible 
polynomials to arbitrary hyperbolic polynomials.
\par
\begin{Cor}\label{cor:dual-hyp-cone}
Let $p\in\R[x_0,x_1,\dots,x_n]$ be a hyperbolic polynomial with respect 
to $e\in\R^{n+1}$. Let $X_1,X_2,\dots,X_r$ be the irreducible components 
of the hyperbolic hypersurface $\cV(p)$. Then we have
\begin{equation}\label{eq:dual-hyp-cone}
C_e(p)^\vee\ 
= \co\big(\cl(S_1\cup S_2\cup\dots\cup S_r)\big),
\end{equation}
where $S_i = (X_i^\ast)_\reg(\R) \cap H_{e,+}$ for $i=1,\dots,r$. 
\end{Cor}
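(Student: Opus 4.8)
\textbf{Proof plan for \Cref{cor:dual-hyp-cone}.}
The plan is to reduce the reducible case to the irreducible \Cref{thm:dual-hyp-cone} by means of the convex-geometric machinery already established, specifically \Cref{pro:dual-of-intersection}. First I would note that, since $p$ is hyperbolic with respect to $e$, each irreducible factor $p_i$ of $p$ (where $\cV(p_i)=X_i$) is itself hyperbolic with respect to $e$: the univariate polynomial $p(te-a)=\prod_i p_i(te-a)^{m_i}$ has only real roots, so each factor $p_i(te-a)$ does too, and $p_i(e)\neq 0$. Consequently each $C_e(p_i)$ is a closed convex cone with $e$ in its interior, and the elementary identity $C_e(p)=C_e(p_1)\cap\dots\cap C_e(p_r)$ holds: a point $a$ lies in $C_e(p)$ iff all roots of $p(te-a)$ are non-negative iff all roots of each $p_i(te-a)$ are non-negative. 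Thus $e$ is an interior point of the intersection $K=C_e(p_1)\cap\dots\cap C_e(p_r)$.

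Next I would apply \Cref{thm:dual-hyp-cone} to each irreducible factor to get
\[
C_e(p_i)^\vee=\cl\big(\co(S_i)\big),\qquad S_i=(X_i^\ast)_\reg(\R)\cap H_{e,+},
\]
so that each $C_e(p_i)^\vee$ is the closed convex cone generated by the (not necessarily closed, but semi-algebraic) cone $S_i\subset(\R^{n+1})^\ast$. Now \Cref{pro:dual-of-intersection}, applied with $K_i=C_e(p_i)$, the interior point $e$, and $C_i=S_i$, yields precisely
\[
C_e(p)^\vee=K^\vee=\co\big(\cl(S_1\cup S_2\cup\dots\cup S_r)\big),
\]
which is the assertion of \eqref{eq:dual-hyp-cone}. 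One small point to check in invoking \Cref{pro:dual-of-intersection} is that its hypothesis $K_i^\vee=\cl(\co(C_i))$ is met; but $\cl(\co(S_i))=\co(\cl(S_i))$ by \Cref{lem:cn-ccc} (each $S_i$ is a cone contained in the pointed dual cone $C_e(p_i)^\vee$, and $e$ is an interior point of $C_e(p_i)$), so the two formulations agree and the result applies verbatim.

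There is essentially no hard part here: the statement is a pure formality once \Cref{thm:dual-hyp-cone} and \Cref{pro:dual-of-intersection} are in hand. The only mild subtlety — and the one place I would be careful — is the bookkeeping needed to ensure that $e$ is genuinely an \emph{interior} point of the intersection $K=\bigcap_i C_e(p_i)$, as required by \Cref{pro:dual-of-intersection} (via Corollary~16.4.2 of Rockafellar): this follows because $e$ is an interior point of each $C_e(p_i)$ individually, and a finite intersection of sets each containing a fixed point in its interior again contains that point in its interior. With that verified, the corollary drops out immediately, so I would keep the proof to a few lines.
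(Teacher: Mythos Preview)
Your proposal is correct and follows exactly the paper's approach: factor $p$ into irreducibles, use $C_e(p)=\bigcap_i C_e(p_i)$, apply \Cref{thm:dual-hyp-cone} to each factor, and then invoke \Cref{eq:inter-dual} of \Cref{pro:dual-of-intersection}. The paper's proof is just the two-line version of what you wrote, omitting the routine verifications (hyperbolicity of the factors, $e$ interior to the intersection) that you spell out.
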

\begin{proof}
If $p=p_1^{m_1}p_2^{m_2}\dots p_r^{m_r}$ is a factorization of $p$ into irreducible 
factors, then $C_e(p)=C_e(p_1)\cap C_e(p_2)\cap\dots\cap C_e(p_r)$. The claim follows
from \Cref{thm:dual-hyp-cone} and from \Cref{eq:inter-dual} in 
\Cref{pro:dual-of-intersection}.
\end{proof}
\begin{Rem}
\begin{enumerate}
\item
The cones $S_i$ in \Cref{cor:dual-hyp-cone}, their union $S_1\cup S_2\cup\dots\cup S_r$,
and the Euclidean closure $\cl (S_1 \cup S_2 \cup\dots\cup S_r)$, see 
\cite[Prop.~2.2.2]{BCR}, are semi-algebraic sets. Hence, the closed convex 
cone $C_e(p)^\vee$ is the convex cone generated by the semi-algebraic cone
$\;\cl (S_1 \cup S_2 \cup\dots\cup S_r)$.
\item
The Euclidean closure of the set $S_i$ in \Cref{cor:dual-hyp-cone} is the set of 
central points of $X_i^\ast(\R)$ lying in $H_{e,+}$ for all $i = 1,2,\dots,r$. 
Hence, writing $\mathrm{cent}(X)$ for the set of central real points of $X$, we 
can rephrase \Cref{eq:dual-hyp-cone} as
\[
C_e(p)^\vee\ 
=\ \co \left( \bigcup_{i=1}^r \mathrm{cent}(X_i^\ast(\R))\cap H_{e,+} \right).
\]
\end{enumerate}
\end{Rem}
We can strengthen \Cref{thm:dual-hyp-cone} to the homogeneous version of 
Kippenhahn's Theorem, the statement of \Cref{thm:dim3}, if the hyperbolicity cone
has dimension three. Chien and Nakazato \cite{ChienNakazato2010} observed that 
this stronger version is false in higher dimensions, see \Cref{ex:ChienNakazato}.
\par
\begin{Thm}\label{thm:dim3}
Let $p\in \R[x_0,x_1,x_2]$ be an irreducible hyperbolic polynomial with respect to 
$e\in\R^3$. Then $C_e(p)^\vee = \co\big(\cV(p)^\ast(\R)\cap H_{e,+}\big)$.
\end{Thm}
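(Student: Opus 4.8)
The plan is to combine \Cref{thm:dual-hyp-cone} with a dimension count showing that, in the plane case, every singular point of the dual curve $\cV(p)^\ast$ that lies in the half-space $H_{e,+}$ is already contained in the closed convex cone $\co\big((\cV(p)^\ast)_\reg(\R)\cap H_{e,+}\big)$. By \Cref{thm:dual-hyp-cone} we have $C_e(p)^\vee = \cl\big(\co\big((\cV(p)^\ast)_\reg(\R)\cap H_{e,+}\big)\big)$, so the inclusion ``$\supset$'' is immediate once we know that $\cV(p)^\ast(\R)\cap H_{e,+}\subset C_e(p)^\vee$. For the reverse inclusion ``$\subset$'' we must improve \Cref{thm:dual-hyp-cone} by dropping the outer Euclidean closure and enlarging the generating set to include the singular real points of $\cV(p)^\ast$ lying in $H_{e,+}$; the point is that $\co$ of the larger set $\cV(p)^\ast(\R)\cap H_{e,+}$ is already closed, so no closure is needed.

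First I would establish that $\cV(p)^\ast(\R)\cap H_{e,+}\subset C_e(p)^\vee$. The regular part is handled exactly as in the proof of \Cref{thm:dual-hyp-cone}. For a singular real point $\ell$ of the dual curve with $\ell(e)\ge 0$, I would argue by continuity: every real point of $\cV(p)^\ast$ in the plane is a limit of a sequence in $(\cV(p)^\ast)_\reg(\R)$, because a plane curve has only finitely many singular points and its regular real locus is Euclidean-dense in any one-dimensional real branch through a singular point — more precisely, one uses that $\cV(p)^\ast(\R)$ has local dimension $1$ at such points (\Cref{rem:central-pt-local-dim}) so that $\ell$ is central, hence a limit of regular real points $\ell_j$. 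A subtlety is that the approximating $\ell_j$ need not lie in the closed half-space $H_{e,+}$; but since $C_e(p)^\vee$ is a pointed closed convex cone (here $n=2$, so $C_e(p)$ has nonempty interior and its dual is pointed), once we know $\ell_j\in C_e(p)^\vee$ for those $j$ with $\ell_j(e)>0$ — which holds by the regular-point argument — and $\ell=\lim\ell_j\in C_e(p)^\vee$ by closedness, we are done. Alternatively, and more robustly, one shows directly that for a singular real point $\ell$ with $\ell(e)>0$ the hyperplane $\cV(\ell)$ still cannot meet the interior of $C_e(p)$: if it did, $\cV(\ell)$ would be a line meeting the innermost oval of the hyperbolic curve transversally or tangentially at a point, and a tangent line at a singular point of $\cV(p)$ is excluded by \Cref{Cor:multiplicity} while a transversal line cannot be a dual point at all; so $\ell$ has constant sign on $C_e(p)$ and $\ell(e)>0$ forces $\ell\in C_e(p)^\vee$. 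The boundary case $\ell(e)=0$ then follows by closedness of $C_e(p)^\vee$.

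Next I would prove that $\co\big(\cV(p)^\ast(\R)\cap H_{e,+}\big)$ is closed, which gives ``$\subset$'' via \Cref{thm:dual-hyp-cone}. The set $\cV(p)^\ast(\R)\cap H_{e,+}$ is a semi-algebraic cone, and its base obtained by intersecting with $\hyper_e$ is a bounded semi-algebraic set, hence has compact Euclidean closure; but in fact the dual curve real locus intersected with $\{\ell(e)=1\}$ is already closed away from $\ell(e)=0$, and the finitely many singular points are the only points added to $(\cV(p)^\ast)_\reg(\R)$, so $\cV(p)^\ast(\R)=\cl\big((\cV(p)^\ast)_\reg(\R)\big)$ and thus $\hyper_e\big(\cV(p)^\ast(\R)\big)\supset\cl\big(\hyper_e((\cV(p)^\ast)_\reg(\R))\big)\cap\{\text{bounded part}\}$. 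By \Cref{lem:cn-ccc}, applied with $K=C_e(p)$ and $C=\cV(p)^\ast(\R)\cap H_{e,+}\subset C_e(p)^\vee$ and the interior point $e$, the set $\hyper_e\big(\co(\cl(C))\big)=\cv\big(\cl(\hyper_e(C))\big)$ is a compact base of $\co(\cl(C))=\cl(\co(C))$; combining with the previous step, $\cl(\co(C))=C_e(p)^\vee$, and since the singular points already lie in $C$ we get $\cl(\co(C))=\co(C)$, i.e.\ $C_e(p)^\vee=\co\big(\cV(p)^\ast(\R)\cap H_{e,+}\big)$.

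The main obstacle is the first step: proving that a \emph{singular} real point $\ell$ of the dual curve with $\ell(e)\ge 0$ lies in $C_e(p)^\vee$. In dimension $n=2$ this works because the only way $\ell$ could fail to be in $C_e(p)^\vee$ is if the line $\cV(\ell)$ passes through the interior of the spectrahedral cone, and a careful analysis of the nested-ovals picture of the hyperbolic curve (together with \Cref{Cor:multiplicity} to rule out tangency at a singular point) shows this cannot happen — equivalently, every real point of the dual curve is central by the local-dimension criterion, so \Cref{thm:dual-hyp-cone} already accounts for it in the closure, and the gain is merely that in the plane this closure is superfluous. I expect the cleanest write-up to phrase this as: in $\P^2$, $\cV(p)^\ast(\R) = \cl\big((\cV(p)^\ast)_\reg(\R)\big)\cup\{\text{finitely many singular points}\}$, each such singular point is central and hence (by the proof of \Cref{thm:dual-hyp-cone}) already in $C_e(p)^\vee$, and no closure is needed because adding finitely many extra cone generators to a set whose convex cone is already closed keeps it closed.
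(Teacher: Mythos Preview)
Your central claim in approach (a) --- that every real singular point of the dual curve $\cV(p)^\ast$ is central, because ``$\cV(p)^\ast(\R)$ has local dimension $1$ at such points'' --- is false in general, and this is precisely the obstacle the theorem must overcome. A real plane curve can have \emph{isolated} real singular points (acnodes): real nodes whose two local branches are complex conjugate, as on $y^2=x^2(x-1)$ at the origin. Such a point has local real dimension $0$ and is not a limit of regular real points, so \Cref{rem:central-pt-local-dim} does not apply. Dual curves to hyperbolic curves do exhibit such isolated real singularities; this is exactly what the introduction flags when it says Kippenhahn's theorem ``follows if we can show that none of the isolated real singularities of $X^\ast$ lie outside of $W$''. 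So your reduction to \Cref{thm:dual-hyp-cone} via centrality collapses at the interesting points, and with it the final paragraph's summary (``each such singular point is central'').

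Your approach (b) is morally the paper's argument but is incomplete at the key step and cites the wrong lemma. You need to know that for \emph{any} real $\ell\in\cV(p)^\ast$ (including an isolated singular one) the line $\cV(\ell)$ is tangent to $\cV(p)$ at some point; biduality does not hand this to you directly at a singular $\ell$. The paper supplies it by approximating $\ell$ with \emph{complex} regular points $\ell_j$ of $\cV(p)^\ast$ (via \Cref{rem:dense-C}, not the real-point version \Cref{rem:bidualitytangency}), taking tangent points $x_j\in\cV(p)_\reg$, and using compactness of $\P^2$ to pass to a limit $x\in\cV(p)$ at which $\cV(\ell)$ is tangent. If $\ell\notin C_e(p)^\vee$, the real line $\cV(\ell)$ meets the interior of $C_e(p)$; hyperbolicity with respect to that interior point forces $x$ to be real, and then \Cref{lem:multiplicity} (which covers singular $x$ via multiplicities --- \Cref{Cor:multiplicity} is stated only for regular points) yields the contradiction. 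The easy inclusion $C_e(p)^\vee\subset\co(\cV(p)^\ast(\R)\cap H_{e,+})$ is immediate from \Cref{thm:dual-hyp-cone} together with \Cref{lem:cn-ccc}, without any separate closedness argument.
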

\begin{proof}
The inclusion $C_e(p)^\vee \subset \co\big(\cV(p)^\ast(\R)\cap H_{e,+}\big)$ follows 
from \Cref{thm:dual-hyp-cone} and \Cref{lem:cn-ccc}. We prove the opposite inclusion 
by contradiction based on two observations. Let $\ell$ be a nonzero functional that
lies in $\co\big(\cV(p)^\ast(\R)\cap H_{e,+}\big)$ but not in $C_e(p)^\vee$. 
\par
First, the hyperplane $\cV(\ell)\subset\C^3$ intersects the interior of the 
hyperbolicity cone $C_e(p)$, as it holds for all functionals 
$\hat{\ell}\in H_{e,+}\setminus C_e(p)^\vee$. Since $\hat{\ell}\in H_{e,+}$, we have
$\hat{\ell}(e)\geq 0$, and since $\hat{\ell}\not\in C_e(p)^\vee$ there is a point 
$x\in C_e(p)$ such that $\hat{\ell}(x)<0$. Hence, there is $\lambda\in[0,1)$ such 
that the point $y=(1-\lambda)e+\lambda x$ lies on $\cV(\hat{\ell})$. As $e$ is an 
interior point of $C_e(p)$ so is $y$ \cite[Thm.~6.1]{Rockafellar1970}.
\par
Secondly, the line $\cV(\ell)\subset\P^2$ is tangent to $\cV(p)$ at a point 
$x\in\cV(p)$. Applying \Cref{rem:dense-C} to the projective variety $X=\cV(p)^\ast$, 
we can choose a sequence of regular points $(\ell_j)$ in $\cV(p)^\ast$ converging to
$\ell$ and a sequence of regular points $(x_j)$ of $\cV(p)$ such that the line 
$\cV(\ell_j)$ is tangent to $\cV(p)$ at $x_j$ for all $j\in\N$. Because of the 
compactness of the projective space $\P^2$, we can assume $(x_j)$ converges to a 
point $x\in\cV(p)$. Since $\cV(\ell_j)$ is tangent to $\cV(p)$ at $x_j$, the line 
$\cV(\ell)$ is tangent to $\cV(p)$ at $x$, see \cite[Sec.~8.2]{Fischer2001}.
\par
The real line $\cV(\ell)\subset\P^2$ intersects the hyperbolicity cone $C_e(p)$ 
in an interior point by the first observation. Since the polynomial $p$ is 
hyperbolic with respect to this interior point and since the point $x\in\cV(p)$ 
constructed above lies on the line $\cV(\ell)$, it follows that $x$ is a real point.
\Cref{lem:multiplicity} then shows that $\cV(\ell)$ is not tangent to $\cV(p)$ at 
$x$, which contradicts the second observation.
\end{proof}
Again, convex geometry suffices to generalize \Cref{thm:dim3} from irreducible 
polynomials to arbitrary hyperbolic polynomials.
\par
\begin{Cor}\label{cor:dim3}
Let $p\in \R[x_0,x_1,x_2]$ be a hyperbolic polynomial with respect to $e\in\R^3$. 
Then $C_e(p)^\vee = \co  (\cV(p)^\ast(\R)\cap H_{e,+})$.
\end{Cor}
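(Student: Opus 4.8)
The plan is to reduce \Cref{cor:dim3} to the irreducible case \Cref{thm:dim3} by the same convex-geometric argument that turns \Cref{thm:dual-hyp-cone} into \Cref{cor:dual-hyp-cone}, adding one extra observation that makes the Euclidean closure drop out. First I would factor $p=p_1^{m_1}\cdots p_r^{m_r}$ into pairwise distinct irreducible factors $p_i\in\R[x_0,x_1,x_2]$. Since the roots of $p(te-a)$ are, with multiplicity, the union of the roots of the polynomials $p_i(te-a)$, each $p_i$ is hyperbolic with respect to $e$ and $C_e(p)=C_e(p_1)\cap\cdots\cap C_e(p_r)$; moreover $e$ is an interior point of every $C_e(p_i)$ and hence of $C_e(p)$, so \Cref{pro:dual-of-intersection} is applicable with $K_i=C_e(p_i)$.

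Next, \Cref{thm:dim3} gives $C_e(p_i)^\vee=\co\bigl(\cV(p_i)^\ast(\R)\cap H_{e,+}\bigr)$ for each $i$. Setting $C_i=\cV(p_i)^\ast(\R)\cap H_{e,+}$ and using that a dual convex cone is always closed, we obtain $\cl(\co(C_i))=\co(C_i)=C_e(p_i)^\vee$, which is precisely the hypothesis $K_i^\vee=\cl(\co(C_i))$ demanded by \Cref{pro:dual-of-intersection}. Equation \eqref{eq:inter-dual} then yields
\[
C_e(p)^\vee=\co\bigl(\cl(C_1\cup\cdots\cup C_r)\bigr).
\]

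The step I expect to be the crux, modest though it is, is showing that the closure here is superfluous. Each $\cV(p_i)^\ast$ is a projective variety, so its affine cone is a real affine variety and therefore $\cV(p_i)^\ast(\R)$ is closed in the Euclidean topology; intersecting with the closed half-space $H_{e,+}$ keeps it closed, so every $C_i$ is closed and the finite union $C_1\cup\cdots\cup C_r$ is closed. (This is exactly where \Cref{cor:dim3} improves on \Cref{cor:dual-hyp-cone}: there the $S_i$ involve only the \emph{regular} real points of $X_i^\ast$, whose union need not be closed.) Hence $\cl(C_1\cup\cdots\cup C_r)=C_1\cup\cdots\cup C_r=\bigl(\cV(p_1)^\ast\cup\cdots\cup\cV(p_r)^\ast\bigr)(\R)\cap H_{e,+}$. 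Matching this with the statement only requires the convention that $\cV(p)^\ast$ denotes the union of the dual varieties of the irreducible components $\cV(p_i)$ of $\cV(p)$ — equivalently, the closure of the tangent hyperplanes at regular points of $\cV(p)$, since in $\P^2$ two distinct components meet in only finitely many points. Combining the two displays gives $C_e(p)^\vee=\co\bigl(\cV(p)^\ast(\R)\cap H_{e,+}\bigr)$, as desired.
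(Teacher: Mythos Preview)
Your proposal is correct and follows essentially the same route as the paper: factor $p$, intersect the hyperbolicity cones, apply \Cref{thm:dim3} to each factor, and invoke \Cref{eq:inter-dual} of \Cref{pro:dual-of-intersection} together with $\cV(p)^\ast=\bigcup_i\cV(p_i)^\ast$. The paper's proof is terser and leaves the disappearance of the closure implicit; your explicit observation that each $C_i=\cV(p_i)^\ast(\R)\cap H_{e,+}$ is already closed (real points of an affine cone over a projective variety intersected with a closed half-space) is exactly what justifies it.
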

\begin{proof}
If $p=p_1^{m_1}p_2^{m_2}\dots p_r^{m_r}$ is a factorization of $p$ into irreducible 
factors, then 
\[
C_e(p)=C_e(p_1)\cap C_e(p_2)\cap\dots\cap C_e(p_r)
\]
and 
\[
\cV(p)^\ast=\cV(p_1)^\ast\cup\cV(p_2)^\ast \cup\dots\cup\cV(p_r)^\ast.
\]
The claim follows from \Cref{thm:dim3} and \Cref{eq:inter-dual} in 
\Cref{pro:dual-of-intersection}.
\end{proof}
Not all components in the union 
$\cV(p)^\ast=\cV(p_1)^\ast\cup\cV(p_2)^\ast \cup\dots\cup\cV(p_r)^\ast$ are needed 
in the statements of \Cref{cor:dim3} and \Cref{cor:dual-hyp-cone}. The selection can 
be described as follows.
\par
\begin{Def}\label{def:alg-boundary}
Let $S\subset \R^n$ be a semi-algebraic set. The \emph{algebraic boundary} 
of $S$, denoted $\partial_a S$, is the Zariski closure in $\C^n$  of the 
Euclidean boundary $\partial S$ of $S$. 
\end{Def}
Determining the algebraic boundary of the hyperbolicity cone of a hyperbolic
polynomial $p\in\R[x_0,x_1,\dots,x_n]$ amounts to computing a factorization of 
$p$ into irreducible factors and picking the correct subset of the factors.
\par
\begin{Rem}\label{rem:boundary-hyperbolicity}
\begin{enumerate}
\item
Let $p\in\R[x_0,x_1,\dots,x_n]$ be irreducible and hyperbolic with respect to 
$e$. The algebraic boundary of the hyperbolicity cone $C_e(p)$ is the algebraic 
hypersurface $\cV(p) = \{x\in\C^{n+1}\colon p(x) = 0\}$.
\item
If $p$ is hyperbolic with respect to $e$, but factors as $p = p_1 p_2\dots p_r$ 
into irreducible factors, then 
$\cV(p) = \cV(p_1)\cup \cV(p_2)\cup \dots\cup \cV(p_r)$ is the decomposition of 
the hypersurface $\cV(p)$ into its irreducible components. The algebraic boundary 
of $C_e(p)$ is a union of some, not necessarily all, of the irreducible 
hypersurfaces $\cV(p_i)$. The hypersurfaces in this union are the irreducible 
components of $\partial_a C_e(p)$.
\item
If $p$ is squarefree, i.e.~$p_j\neq p_i$ for $i\neq j$, then the factors $p_i$ that 
contribute to the algebraic boundary of $C_e(p)$ are exactly those with the property 
that the hyperbolicity cone of $\prod_{j\neq i}p_j$ is strictly larger than $C_e(p)$, 
because $C_e(p) = \bigcap_i C_e(p_i)$.
\end{enumerate}
\end{Rem}
\begin{figure}[ht!]%
a)\includegraphics[height=5cm]{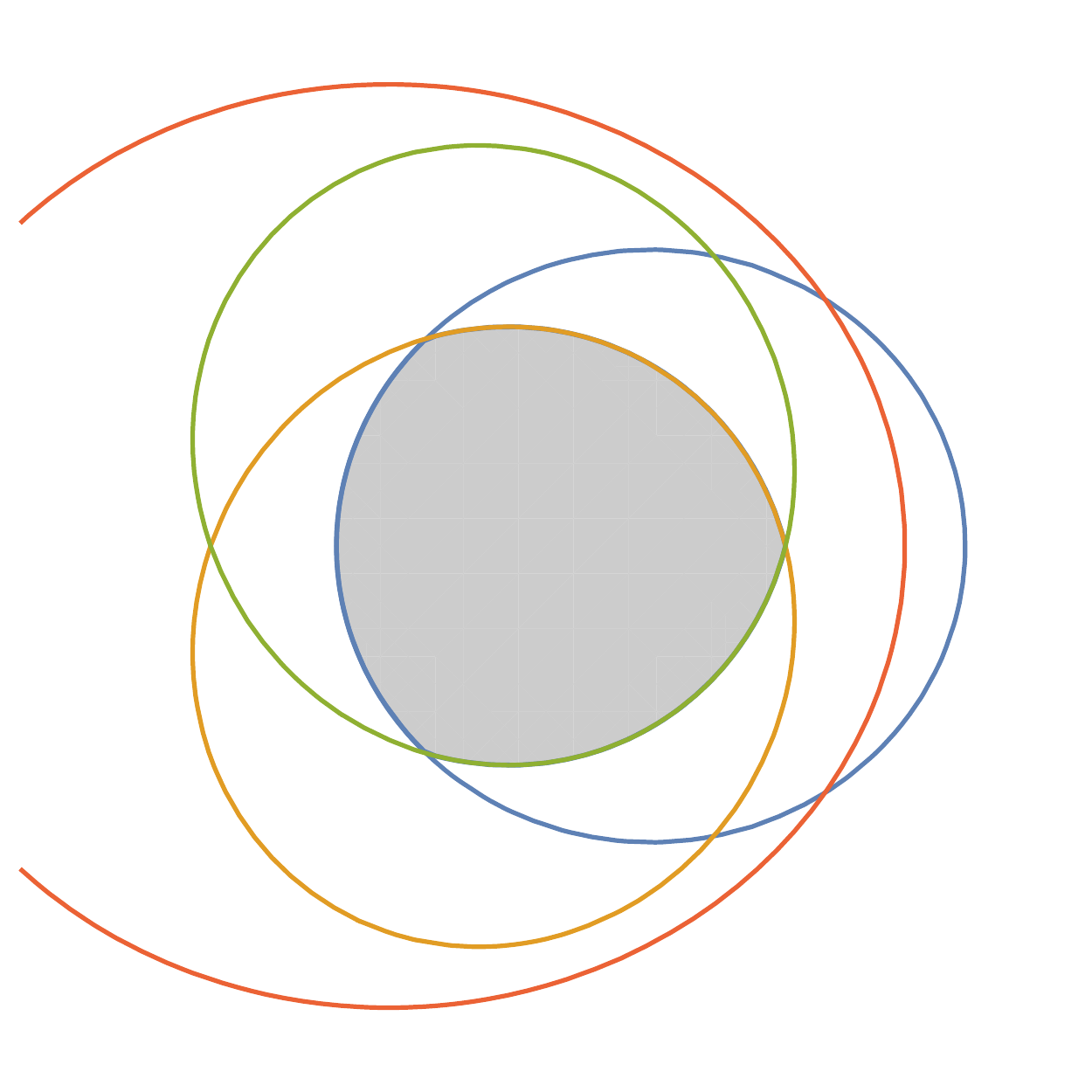}%
b)\includegraphics[height=5cm]{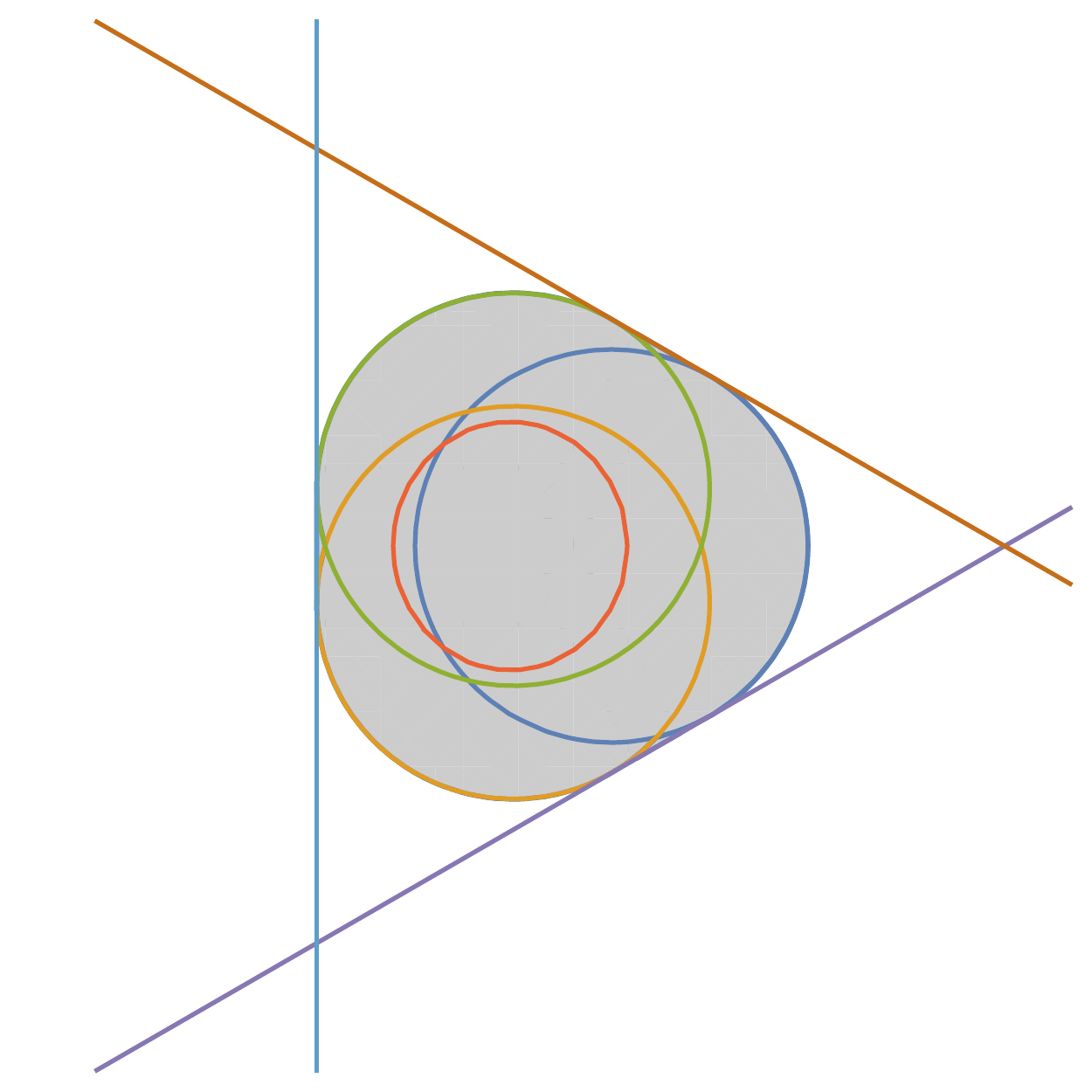}%
\caption{%
a) Intersection of four filled ellipses (gray area).
b) Convex hull of the dual ellipses (gray area). 
}
\label{fig:four-ellipses}
\end{figure}
%
% \Stephan{
% \Cref{ex:four_ellipses} ist ein schönes Beispiel. 
% Könnten wir \Cref{fig:four-ellipses} b) verbessern? 
% Die drei farbigen Geraden gehören nicht zu den dualen Varietäten. Daher würde ich diese Geraden schwarz färben oder weglassen. }
\par
\begin{Exm}\label{ex:four_ellipses}
Consider the four ellipses depicted in \Cref{fig:four-ellipses} a). The intersection 
of the filled ellipses is an area with nonempty interior, which is isometric to the 
base of a hyperbolicity cone \emph{via} the embedding $\R^2\to\R^3$, 
$(x_1,x_2)^T\mapsto(1,x_1,x_2)^T$. Note that the red ellipse does not contribute to 
the algebraic boundary of the hyperbolicity cone, but the other ellipses do. 
Homogenizing the polynomials, we obtain four conics in $\P^2$ from the ellipses. The 
real affine parts of the dual conics, \emph{via} the embedding 
$(\R^2)^\ast\to(\P^2)^\ast$, $(y_1,y_2)\mapsto(1:y_1:y_2)$, are depicted in 
\Cref{fig:four-ellipses} b). Their convex hull is a base of the dual convex cone to 
the hyperbolicity cone. We return to these bases in \Cref{sec:duality-selfdual}. Note 
that the red ellipse in \Cref{fig:four-ellipses} b) is redundant, as the other three 
ellipses generate the same convex hull as all four together.
\end{Exm}
\begin{Cor}[Sinn~\cite{Sinn2015}]\label{cor:dual-hyp-cone-bound}
Let $p\in\R[x_0,x_1,\dots,x_n]$ be a hyperbolic polynomial with respect to 
$e\in\R^{n+1}$. Let $X_1,X_2,\dots,X_r$ be the irreducible components of the algebraic boundary $\partial_a C_e(p)$. Then we have
\[
C_e(p)^\vee\ 
= \co\big(\cl(S_1\cup S_2\cup\dots\cup S_r)\big),
\]
where $S_i = (X_i^\ast)_\reg(\R) \cap H_{e,+}$ for $i=1,\dots,r$. 
\end{Cor}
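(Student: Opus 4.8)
The plan is to derive this sharpening of \Cref{cor:dual-hyp-cone} by discarding, one irreducible factor at a time, those factors of $p$ that do not contribute to the algebraic boundary $\partial_a C_e(p)$, exploiting that such a factor is invisible both to the hyperbolicity cone and therefore to its dual cone.

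First I would reduce to the squarefree case. Since $C_e(p) = C_e(p_\mathrm{red})$ and since $\partial_a C_e(p)$, the components $X_i$, their duals $X_i^\ast$ and the cones $S_i$ are all unchanged when $p$ is replaced by its radical, I may assume $p = q_1 q_2\cdots q_N$ with $q_1,\dots,q_N$ distinct and irreducible; after relabelling, $\cV(q_i) = X_i$ for $i=1,\dots,r$ are the given components of $\partial_a C_e(p)$, while $q_{r+1},\dots,q_N$ are the irreducible factors that do not contribute (by \Cref{rem:boundary-hyperbolicity}(2) every component of $\partial_a C_e(p)$ is some $\cV(q_i)$, and distinct $q_i$ give distinct hypersurfaces). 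If $N = r$ there is nothing more to do. Otherwise, by \Cref{rem:boundary-hyperbolicity}(3) the non-contribution of $q_N$ means $C_e(q_1\cdots q_{N-1}) = C_e(p)$, the inclusion ``$\subseteq$'' being automatic from $C_e(p) = \bigcap_{j} C_e(q_j)$; moreover $q_1\cdots q_{N-1}$ is again hyperbolic with respect to $e$ (a factor of a hyperbolic polynomial is hyperbolic with respect to the same point, and products of such are). Since the algebraic boundary depends only on the cone, $\partial_a C_e(q_1\cdots q_{N-1}) = \partial_a C_e(p) = X_1\cup\dots\cup X_r$, so \Cref{rem:boundary-hyperbolicity}(2) identifies $q_{r+1},\dots,q_{N-1}$ as exactly the non-contributing factors of $q_1\cdots q_{N-1}$. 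I may therefore remove $q_{N-1}$ next, and iterate, reaching after $N-r$ steps the polynomial $q := q_1\cdots q_r$ with $C_e(q) = C_e(p)$.

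Finally, $C_e(p)^\vee = C_e(q)^\vee$, and applying \Cref{cor:dual-hyp-cone} to $q$ — whose irreducible components are exactly $X_1,\dots,X_r$ — gives $C_e(q)^\vee = \co\bigl(\cl(S_1\cup\dots\cup S_r)\bigr)$, which is the assertion. The proof is essentially formal: the substance is carried entirely by \Cref{thm:dual-hyp-cone} and \Cref{cor:dual-hyp-cone}, and the only point requiring attention is the bookkeeping in the inductive step, namely that deleting one non-contributing factor changes neither the hyperbolicity cone, nor its algebraic boundary, nor the list of factors that remain non-contributing — each of which is immediate from \Cref{rem:boundary-hyperbolicity}. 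One could instead avoid the induction and argue directly as in \Cref{thm:dual-hyp-cone}: every exposed ray of $C_e(p)^\vee$ is the normal ray at a relative interior point $x$ of an exposed face $F_\ell$ of $C_e(p)$, and $F_\ell \subseteq \partial C_e(p)\subseteq\bigcup_{i\le r}X_i(\R)$; since $\partial C_e(p)$ is pure of dimension $n$, $x$ is a Euclidean limit of points in $\bigcup_{i\le r}\bigl(\partial C_e(p)\cap(X_i)_\reg\bigr)$, and the tangency argument of \Cref{rem:bidualitytangency} then places $\ell$ in the closure of $\bigcup_{i\le r}(X_i^\ast)_\reg(\R)$.
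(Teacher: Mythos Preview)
Your proof is correct and follows essentially the same route as the paper: the paper's proof is the one-line ``This follows from Part~3) of \Cref{rem:boundary-hyperbolicity} and \Cref{cor:dual-hyp-cone},'' and your argument simply unpacks this by making the removal of non-contributing factors explicit via induction. The alternative direct argument you sketch at the end is also sound and closer in spirit to the proof of \Cref{thm:dual-hyp-cone} itself, but is not what the paper does here.
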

\begin{proof}
This follows from Part 3) of \Cref{rem:boundary-hyperbolicity} and 
\Cref{cor:dual-hyp-cone}.
\end{proof}
\begin{Rem}\label{rem:zariski-extreme}
The algebraic boundary of the hyperbolicity cone $C_e(p)$ is algebraically the optimal description of the set of extreme rays of $C_e(p)^\vee$.
Using the same techniques and ideas that are presented in this paper, one can argue that the Zariski closure of the set of extreme rays of $C_e(p)^\vee$ is the union of the dual varieties $X_i^\ast$ 
for which the varieties $X_i$ belong to the algebraic boundary 
$\partial_a C_e(p)$, see \cite[Corollary~3.5]{Sinn2015}. 
\end{Rem}
The statements above in this section require employing the duality theory of real 
algebraic geometry, which is more subtle than the duality theory of algebraic geometry. 
The reason is that the set of Hermitian matrices is just a real vector space, not a 
complex vector space.
\par
\begin{Rem}\label{rem:complex-duality}
The image of the set $H_d^{(1)}$ of Hermitian matrices of rank $1$ under the projection 
map 
\[
  \pi(M) = (\scp{M,A_1},\scp{M,A_2},\ldots,\scp{M,A_n})
\]
from $H_d$ to $\R^n$ is of interest to us (see \Cref{sec:qm}), as its convex 
hull is the joint numerical range. The complexification of the real vector space $H_d$ 
is the complex vector space of all complex $d\times d$ matrices. The Zariski closure
of $H_d^{(1)}$ is the variety $R_d^{(1)}$ of complex $d\times d$ matrices of rank at 
most $1$. Therefore, the images of $H_d^{(1)}$ and $R_d^{(1)}$ under the map $\pi$ have 
the same Zariski closure. To evaluate $\scp{M,A_i}$ at a complex matrix $M$, we write 
$M = {\rm Re}(M) + \ii {\rm Im}(M)$, where ${\rm Re}(M)$ and ${\rm Im}(M)$ are 
Hermitian, and define $\scp{M,A_i} = \tr({\rm Re}(M) A_i) + \ii \tr({\rm Im}(M)A_i)$ 
(which is $\C$-linear). If the map $\pi$ restricted to $R_d^{(1)}$ is an isomorphism, 
then Prop.~4.1 in \cite{GKZ1994} implies roughly speaking that $\pi(R_d^{(1)})$ 
is projectively dual to the intersection of the orthogonal complement of the kernel of 
$\pi$ and the dual variety of $R_d^{(1)}$. This dual variety is the determinantal 
hypersurface in the space of complex $d\times d$ matrices. The assumption that $\pi$ 
restricted to $R_d^{(1)}$ is an isomorphism is generically satisfied provided that $n$ 
is sufficiently large relative to the size $d$ of the matrices; specifically the kernel 
of $\pi$ must not intersect the secant variety of $R_d^{(1)}$, which is the variety of 
matrices of rank at most $2$.
\end{Rem}
%
%%%%%%%%%%%%%%%%%%%%%%%%%%%%%%%%%%%%%%%%%%%%%%%%%%%%%%%%%%%%%%%%%%%%%%%%%%%%
%%%%%%%%%%%%%%%%%%%%%%%%%%%%%%%%%%%%%%%%%%%%%%%%%%%%%%%%%%%%%%%%%%%%%%%%%%%%
%%%%%%%%%%%%%%%%%%%%%%%%%%%%%%%%%%%%%%%%%%%%%%%%%%%%%%%%%%%%%%%%%%%%%%%%%%%%
%%%%%%%%%%%%%%%%%%%%%%%%%%%%%%%%%%%%%%%%%%%%%%%%%%%%%%%%%%%%%%%%%%%%%%%%%%%%
%%%%%%%%%%%%%%%%%%%%%%%%%%%%%%%%%%%%%%%%%%%%%%%%%%%%%%%%%%%%%%%%%%%%%%%%%%%%
%
\section{Bases of Dual Hyperbolicity Cones}
\label{sec:duality-selfdual}
The results of \Cref{sec:duality-cones} carry over from the dual convex cone of a 
hyperbolicity cone to all its bases, by replacing the convex cone generated by a 
semi-algebraic cone with the convex hull of a base of this semi-algebraic cone.
Remarkably, the results also apply to all linear images of these bases, because we 
can interpret these linear images as the bases of the dual convex cones to linear 
sections of the original hyperbolicity cone.
\par
\begin{figure}[ht!]
 \begin{tikzpicture}
    \matrix (m) [matrix of math nodes,row sep=0.1em,column
    sep=0em,minimum width=0em,
    column 1/.style={anchor=base east},
    column 2/.style={anchor=base west}] {
       e\in \cK \subset &[0em] V &[4em] \R\oplus \R^n &[4em] \R^n \\
       && \cup\\
       && C = \phi_0^{-1}(\cK)\\ % {\scriptstyle }
       && C^\vee = \pi_0(\cK^\vee)\\  % {\scriptstyle }
       && \cap\\
       \cB=\hyper_e(\cK^\vee) \subset \cK^\vee \subset & V^\ast & 
       (\R\oplus \R^n)^\ast & (\R^n)^\ast & 
       \supset \pi(\cB)\\
    }; 
    \path[-stealth] (m-1-3) edge node
    [below]{\small $\phi_0=\pi_0^\ast$} (m-1-2); 
    \path[-stealth] (m-1-4) edge node
    [below]{\small $\phi'=\pi_2^\ast$} (m-1-3); 
    \path[-stealth] (m-6-2) edge node
    [above]{\small $\pi_0$} (m-6-3); 
    \path[-stealth] (m-6-3) edge node
    [above]{\small $\pi_2$} (m-6-4); 
    \path[-stealth, bend right] (m-1-4) edge node [below]{\small $\phi=\pi^\ast$} (m-1-2);
    \path[-stealth, bend right] (m-6-2) edge node [above]{\small $\pi$} (m-6-4);
  \end{tikzpicture}  
\caption{The point $e$ is an interior point of the convex cone $\cK$. We study the 
linear image $\pi(\cB)$ of a base $\cB$ of the dual convex cone $\cK^\vee$. The 
cone $C=\phi_0^{-1}(\cK)$ allows us to identify $\pi(\cB)$ with a base of the dual 
convex cone $C^\vee$.}
\label{eq:setup}
\end{figure}
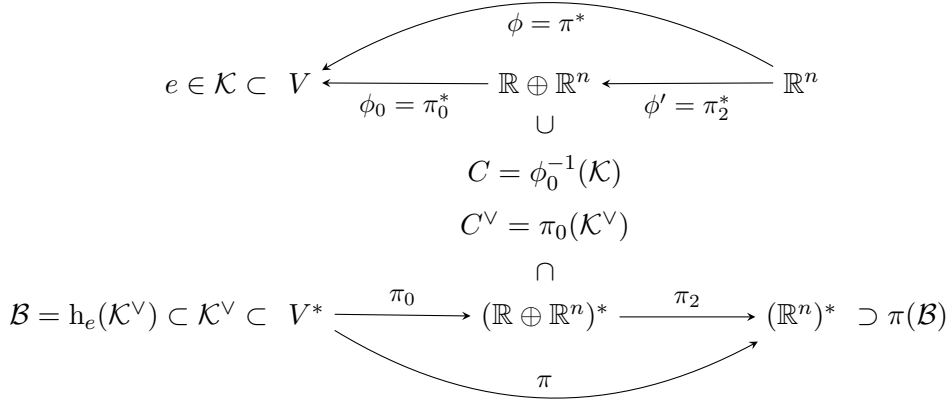
Before returning to hyperbolicity cones at the end of the section, we discuss the
necessary convex geometry. See \Cref{eq:setup} for a summary of our setup. 
\par
\begin{Def}\label{def:construction}
Let $V$ be a finite-dimensional real vector space. Let $\cK\subset V$ be a closed 
convex cone and let $e\neq 0$ be an interior point of $\cK$. The set 
\[
\cB = \hyper_e(\cK^\vee) = \{\ell\in \cK^\vee \colon \ell(e) = 1\}
\]
introduced in \Cref{eq:base} is a compact convex base of the dual convex cone 
$\cK^\vee$ by \Cref{lem:char-compact-base}. We study the image of 
$\cB$ under an arbitrary linear map 
\[
\pi \colon V^\ast \to (\R^n)^\ast.
\]
Let $\pi_0 \colon V^\ast \to (\R\oplus\R^n)^\ast$, $\ell\mapsto(\ell(e),\pi(\ell))$ 
and let $\pi_2 \colon (\R\oplus\R^n)^\ast \to (\R^n)^\ast$, $(y_0,y)\mapsto y$ denote
the projection onto the second summand. We denote the dual map to $\pi$, $\pi_0$, 
and $\pi_2$ by $\phi:\R^n\to V$, $\phi_0:\R\oplus\R^n\to V$, and 
$\phi':\R^n\to\R\oplus\R^n$, respectively. We will describe $\pi(\cB)$ in terms of 
the set
\[
C = \phi_0^{-1}(\cK)\subset\R\oplus \R^n.
\]
Let $b_0,b_1,\dots,b_n$ denote the standard basis of $\R^{n+1}=\R\oplus\R^n$.
\end{Def}
It is easy to see that the set $C$ is a closed convex cone containing $b_0$ as an 
interior point. In addition, $C$ can be seen as a linear section of the convex
cone $\cK$ through the interior point $e$. To be more precise, let 
$v_1,v_2,\ldots,v_n\in V$ and let $\pi_{v_1,\ldots,v_n}:V^\ast\to(\R^n)^\ast$ be 
defined by
\begin{equation}\label{eq:pi-explicit}
\pi_{v_1,\ldots,v_n}(\ell)=(\ell(v_1),\ldots,\ell(v_n)),
\qquad \ell\in V^\ast.
\end{equation}
If $\pi=\pi_{v_1,\ldots,v_n}$, then 
\[
\phi_0((x_0,x)^T)=x_0e+x_1v_1+\cdots+x_nv_n
\]
holds for all $(x_0,x)^T\in\R\oplus\R^n$ where $x=(x_1,\ldots,x_n)^T$. The image
of $\phi_0$ is the subspace 
$\operatorname{im}(\phi_0)=\operatorname{span}(e,v_1,\ldots,v_n)$, which intersects 
the convex cone $\cK$ in the interior point $e$. If $e,v_1,v_2,\ldots,v_n$ are 
linearly independent, then the monomorphism $\phi_0$ identifies $C$ with a linear 
section of $\cK$ through $e$.
\par
\Cref{lem:char-compact-base} shows that the set 
$\hyper_{b_0}(C^\vee)=\{(y_0,y)\in C^\vee\colon y_0=1\}$ is a (compact, convex) 
base of the dual convex cone $C^\vee$, as $b_0$ is an interior point of $C$. 
Obviously, the set $\hyper_{b_0}(C^\vee)$ is isometric under $\pi_2$ to the set
\[
\pi_2\circ\hyper_{b_0}(C^\vee)
=\{y\in(\R^n)^\ast\colon (1,y)\in C^\vee\}.
\]
We show this set is a linear image of the base $\cB$ of $\cK^\vee$.  
\par
\begin{Prop}\label{prop:pro-cone}
The image of the convex cone $\cK^\vee$ under the linear map $\pi_0$ is the closed 
convex cone $\pi_0(\cK^\vee)=C^\vee$. The image of the set $\cB$ is the compact base 
$\pi_0(\cB)=\hyper_{b_0}(C^\vee)$ of $C^\vee$. In addition we have 
$\pi(\cB)=\pi_2\circ\hyper_{b_0}(C^\vee)$.
\end{Prop}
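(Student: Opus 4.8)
The plan is to reduce all three identities to the single equation $\pi_0(\cK^\vee)=C^\vee$, which I would prove first. Throughout I would lean on three elementary facts: since $\phi_0=\pi_0^\ast$, we have the adjoint relation $\langle\pi_0(\ell),w\rangle=\langle\ell,\phi_0(w)\rangle$ for all $\ell\in V^\ast$ and $w\in\R^{n+1}$; $\phi_0(C)\subseteq\cK$ by definition of $C=\phi_0^{-1}(\cK)$; and $\pi=\pi_2\circ\pi_0$, where the first coordinate of $\pi_0(\ell)$ equals $\ell(e)$, so that $\pi_0(\cB)=\pi_0(\cK^\vee)\cap\{y_0=1\}$ and, since $\langle(y_0,y),b_0\rangle=y_0$, also $\hyper_{b_0}(C^\vee)=C^\vee\cap\{y_0=1\}$.

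For the inclusion $\pi_0(\cK^\vee)\subseteq C^\vee$: if $\ell\in\cK^\vee$ and $w\in C$, then $\langle\pi_0(\ell),w\rangle=\langle\ell,\phi_0(w)\rangle\geq 0$ because $\phi_0(w)\in\cK$; hence $\pi_0(\ell)\in C^\vee$. For the reverse inclusion I would proceed in two steps. \emph{Step 1: $\pi_0(\cK^\vee)$ is a closed convex cone.} It is a convex cone, being a linear image of $\cK^\vee$. Moreover $\pi_0(\cB)$ is a compact base of it: it is compact as the continuous image of the compact set $\cB$; it is the slice of $\pi_0(\cK^\vee)$ by the origin-avoiding affine hyperplane $\{y_0=1\}$; and since $\cB$ is a base of $\cK^\vee$, every nonzero $\ell\in\cK^\vee$ satisfies $\ell(e)>0$, so $\pi_0(\ell)=\ell(e)\,\pi_0(\ell/\ell(e))$ is a positive multiple of a point of $\pi_0(\cB)$. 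By \Cref{lem:base-affine}, the set $\pi_0(\cK^\vee)\cup\{0\}=\pi_0(\cK^\vee)$ (the equality holds as $0=\pi_0(0)$) is closed. \emph{Step 2: dualize.} If $w\in(\pi_0(\cK^\vee))^\vee$, then $\langle\ell,\phi_0(w)\rangle=\langle\pi_0(\ell),w\rangle\geq 0$ for every $\ell\in\cK^\vee$, so $\phi_0(w)\in(\cK^\vee)^\vee=\cK$ by conic biduality (\Cref{thm:biduality-cone}); thus $w\in C$, i.e.\ $(\pi_0(\cK^\vee))^\vee\subseteq C$. Applying $(\cdot)^\vee$ and invoking \Cref{thm:biduality-cone} for the closed convex cone $\pi_0(\cK^\vee)$ yields $C^\vee\subseteq((\pi_0(\cK^\vee))^\vee)^\vee=\pi_0(\cK^\vee)$, completing the first identity.

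The remaining two identities are then bookkeeping. Intersecting $\pi_0(\cK^\vee)=C^\vee$ with the hyperplane $\{y_0=1\}$ gives $\pi_0(\cB)=\hyper_{b_0}(C^\vee)$, since a point $\pi_0(\ell)$ has first coordinate $1$ precisely when $\ell\in\cB$; that this set is a compact base of $C^\vee$ is already known because $b_0$ is an interior point of $C$ (\Cref{lem:char-compact-base}). Finally, $\pi(\cB)=\pi_2(\pi_0(\cB))=\pi_2\circ\hyper_{b_0}(C^\vee)$ because $\pi=\pi_2\circ\pi_0$.

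The single genuinely non-formal ingredient---and the step I expect to demand the most care---is the closedness of $\pi_0(\cK^\vee)$ in Step 1: linear images of closed convex cones need not be closed in general, so it is essential to exploit that $\cB$, and therefore $\pi_0(\cB)$, is compact, which ultimately rests on $e$ being an \emph{interior} point of $\cK$ (via \Cref{lem:char-compact-base}). Everything else is formal manipulation with the adjoint pair $\pi_0,\phi_0$ and conic biduality.
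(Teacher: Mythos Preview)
Your proof is correct and follows essentially the same route as the paper's: both arguments establish $\pi_0(\cK^\vee)\subset C^\vee$ via the adjoint relation, show $(\pi_0(\cK^\vee))^\vee\subset C$ and dualize, and prove closedness of $\pi_0(\cK^\vee)$ by exhibiting $\pi_0(\cB)$ as a compact base and invoking \Cref{lem:base-affine}. Your verification that $\pi_0(\cB)$ is a base (via the observation that $\pi_0(\cB)=\pi_0(\cK^\vee)\cap\{y_0=1\}$ because the first coordinate of $\pi_0(\ell)$ is $\ell(e)$) is slightly more direct than the paper's argument through $\ker\pi_0\subset e^\perp$, but the substance is the same.
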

\begin{proof}
To prove the equation $C^\vee = \pi_0(\cK^\vee)$, first let 
$y = \pi_0(\ell)\in\pi_0(\cK^\vee)$ for some $\ell\in \cK^\vee$. For all $x\in C$ 
we have $\phi_0(x)\in \cK$ and hence
\[
y(x)
=\pi_0(\ell)(x)
=\ell(\phi_0(x))
\geq 0.
\]
This shows that $y\in C^\vee$ and therefore $\pi_0(\cK^\vee)\subset C^\vee$.
We prove a partial converse by duality, i.e.~we show 
$\left(\pi_0(\cK^\vee)\right)^\vee \subset C$. Let 
$x\in \left(\pi_0(\cK^\vee)\right)^\vee$. Then 
\[
\ell(\phi_0(x))=\pi_0(\ell)(x)\geq 0
\qquad \text{for all $\ell\in \cK^\vee$,}
\]
which implies $\phi_0(x)\in \cK$ or in other words 
$x\in \phi_0^{-1}(\cK) = C$.
\par
We finish proving $C^\vee = \pi_0(\cK^\vee)$ by showing that the convex cone 
$\pi_0(\cK^\vee)$ is closed. By \Cref{lem:base-affine} it suffices to prove that 
$\pi_0(\cB)$ is a compact base of $\pi_0(\cK^\vee)$. The set $\pi_0(\cB)$ is compact 
as $\cB$ is compact. The set $\cK^\vee\setminus\{0\}$ is the cone generated by $\cB$, 
hence the set $\pi_0(\cK^\vee)\setminus\{0\}$ is the cone generated by $\pi_0(\cB)$. 
It remains to show that the origin does not lie in the affine hull of $\pi_0(\cB)$.
Since $\ell(e)=1$ holds for all $\ell\in\cB$, the affine hull of $\cB$ does not 
intersect the annihilator $e^\perp=\{\ell\in V^\ast:\ell(e)=0\}$. As 
$e\in\operatorname{im}(\phi_0)$ and $\ker(\pi_0)=\operatorname{im}(\phi_0)^\perp$,
we have $\ker\pi_0\subset e^\perp$. This shows $\aff(\cB)\cap\ker\pi_0=\emptyset$,
which proves that the origin does not lie in 
$\pi_0(\aff(\cB))=\aff(\pi_0(\cB))$.
\par
Using the equation $\pi_0(\cK^\vee)=C^\vee$ and the projection 
$\pi_1:(\R\oplus\R^n)^\ast\to\R^\ast$, $(y_0,y)\mapsto y_0$ onto the first summand, 
we get
\begin{align*}
\pi_0(\cB) &= \{\pi_0(\ell)\in(\R\oplus\R^n)^\ast \colon \ell\in\cK^\vee, \ell(e)=1\}\\
&= \{\pi_0(\ell)\in(\R\oplus\R^n)^\ast \colon \ell\in\cK^\vee, \pi_1(\pi_0(\ell))=1 \}\\
&= \{(y_0,y)\in C^\vee \colon y_0=1 \}
= \hyper_{b_0}(C^\vee).
\end{align*}
Finally, $\pi(\cB)=\pi_2\circ\pi_0(\cB)=\pi_2\circ\hyper_{b_0}(C^\vee)$ completes 
the proof.
\end{proof}
\Cref{prop:pro-cone} induces a duality of convex sets. Let 
$b_0^\ast,b_1^\ast,\dots,b_n^\ast$ denote the standard basis of 
$(\R^{n+1})^\ast=(\R\oplus\R^n)^\ast$. Let 
\mbox{$\widetilde{\pi_2}\colon \R\oplus\R^n \to \R^n$,} $(x_0,x)^T\mapsto x$ denote the 
projection onto the second summand. Clearly, the affine slice 
$\hyper_{b_0^\ast}(C)=\{(x_0,x)^T\in C\colon x_0=1\}$ of the convex cone $C$ is 
isometric under $\widetilde{\pi_2}$ to the set
\[
\widetilde{\pi_2}\circ\hyper_{b_0^\ast}(C)=\{x\in\R^n\colon (1,x)^T\in C\}.
\]
We show this set is the dual convex set to a linear image of the set $\cB$.
\par
\begin{Cor}\label{cor:convex-duality}
The dual convex set to $\pi(\cB)$ is 
$\widetilde{\pi_2}\circ\hyper_{b_0^\ast}(C)=\big(\pi(\cB)\big)^\circ$.
If the set $\pi(\cB)$ contains the origin, then 
$\pi(\cB)=\big(\widetilde{\pi_2}\circ\hyper_{b_0^\ast}(C)\big)^\circ$.
\end{Cor}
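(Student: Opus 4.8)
The plan is to read off the two sets in the statement as \emph{dehomogenizations} of the dual pair of cones $C$ and $C^\vee$, and then to invoke biduality twice: for closed convex cones in the first equality, and for convex sets in the second.

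First I would record what \Cref{prop:pro-cone} already provides: $\pi(\cB)=\pi_2\circ\hyper_{b_0}(C^\vee)=\{y\in(\R^n)^\ast\colon (1,y)\in C^\vee\}$, while by construction $\widetilde{\pi_2}\circ\hyper_{b_0^\ast}(C)=\{x\in\R^n\colon (1,x)^T\in C\}$. Thus the first claim is the purely convex-geometric assertion that, for a closed convex cone $C\subset\R\oplus\R^n$ having $b_0$ as an interior point, the affine slice $\{x\colon (1,x)^T\in C\}$ is the dual convex set of $\{y\colon (1,y)\in C^\vee\}$. I would prove this by double inclusion. The inclusion ``$\supset$'' is immediate: if $(1,x)^T\in C$ and $y\in\pi(\cB)$, then $(1,y)\in C^\vee$ pairs non-negatively with $(1,x)^T$, which reads $1+y(x)\geq 0$, so $x\in(\pi(\cB))^\circ$. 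For ``$\subset$'', take $x\in(\pi(\cB))^\circ$. By the biduality theorem for closed convex cones (\Cref{thm:biduality-cone}) we have $C=(C^\vee)^\vee$, so it suffices to check $y_0+y(x)=(y_0,y)\big((1,x)^T\big)\geq 0$ for every $(y_0,y)\in C^\vee$. Since $b_0$ is an interior point of $C$, \Cref{lem:char-compact-base} shows $y_0=(y_0,y)(b_0)>0$ for every nonzero $(y_0,y)\in C^\vee$, and $y_0=0$ forces $(y_0,y)=0$. In that degenerate case the inequality is trivial; otherwise we rescale to $y_0=1$, observe $(1,y)\in\pi(\cB)$, apply $x\in(\pi(\cB))^\circ$ to obtain $1+y(x)\geq 0$, and multiply back by $y_0$. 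Hence $(1,x)^T\in (C^\vee)^\vee=C$, establishing $(\pi(\cB))^\circ=\widetilde{\pi_2}\circ\hyper_{b_0^\ast}(C)$.

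For the final sentence I would note that $\pi(\cB)$ is the image of the compact convex set $\cB$ under a linear, hence continuous, map, so it is compact and convex. If it also contains the origin, the bipolar theorem for convex sets (see \cite[\S14]{Rockafellar1970}) gives $\big((\pi(\cB))^\circ\big)^\circ=\pi(\cB)$, and substituting the identity just proved yields $\pi(\cB)=\big(\widetilde{\pi_2}\circ\hyper_{b_0^\ast}(C)\big)^\circ$.

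The only delicate point in the whole argument is the $y_0=0$ case in the middle step, i.e.\ ruling out ``horizontal'' rays in $C^\vee$; but this is precisely the content of $b_0$ being an interior point of $C$ and is already packaged in \Cref{lem:char-compact-base}, so everything else is bookkeeping with the definitions of $\hyper_{b_0}$, $\hyper_{b_0^\ast}$, and $(\cdot)^\circ$.
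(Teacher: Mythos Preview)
Your proof is correct and follows essentially the same route as the paper: identify $\pi(\cB)$ with $\{y:(1,y)\in C^\vee\}$ via \Cref{prop:pro-cone}, use that $b_0$ is interior to $C$ (equivalently, that $\hyper_{b_0}(C^\vee)$ is a base) together with cone biduality $(C^\vee)^\vee=C$ to match the two affine slices, and then invoke the bipolar theorem for the second sentence. The paper packages the first part as a chain of equivalences rather than a double inclusion, but the content is identical; your explicit handling of the $y_0=0$ case is exactly what the paper's remark ``$\hyper_{b_0}(C^\vee)$ is a base of $C^\vee$'' encodes. One tiny notational slip: after rescaling you should write $y/y_0\in\pi(\cB)$ rather than $(1,y)\in\pi(\cB)$, since $\pi(\cB)\subset(\R^n)^\ast$.
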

\begin{proof}
Let $x\in\R^n$. Then
\begin{align*}
x\in (\pi(\cB))^\circ 
 &\iff x\in (\pi_2\circ\hyper_{b_0}(C^\vee))^\circ\\
 &\iff 1+y(x)\geq 0            && \forall y\in\pi_2\circ\hyper_{b_0}(C^\vee)\\
 &\iff (y_0,y)(1,x)^T\geq 0    && \forall (y_0,y)\in\hyper_{b_0}(C^\vee)\\
 &\iff (1,x)^T\in C\\
 &\iff x\in\widetilde{\pi_2}\circ\hyper_{b_0^\ast}(C).
\end{align*}
The first equivalence follows from \Cref{prop:pro-cone}. Regarding the fourth 
equivalence, note that $\hyper_{b_0}(C^\vee)$ is a base of the convex cone $C^\vee$. 
The remaining equivalences follow immediately from the definitions. As $\pi(\cB)$ is a 
closed convex set, the second statement follows from the equation $S=(S^\circ)^\circ$, 
which holds for all closed convex subsets $S\subset(\R^n)^\ast$ containing the origin 
\cite{Rockafellar1970}. 
\end{proof}
The convex duality of \Cref{cor:convex-duality} has been used earlier
\cite{ChienNakazato2010,Henrion2010,HeltonSpitkovsky2012} in the context of the joint 
numerical range. The conic duality of \Cref{prop:pro-cone} has advantages in our
situation.
\par
\begin{Rem}
If the convex set $\pi(\cB)$ contains the origin, we could use the convex duality 
\begin{equation}\label{eq:conv-duality}
\pi(\cB)
=(\widetilde{\pi_2}\circ\hyper_{b_0^\ast}(C))^\circ
\end{equation}
of \Cref{cor:convex-duality} to describe the set $\pi(\cB)$. \Cref{ex:AstarnotW} and 
\Cref{fig:AstarnotW} present an example where \Cref{eq:conv-duality} fails as
$0\in\pi(\cB)$ fails. A remedy would be to translate $\pi(\cB)$ along a vector 
$(\lambda_1,\ldots,\lambda_n)\in(\R^n)^\ast$ and apply \Cref{eq:conv-duality} to the
translated set. Indeed, if $v_1,\ldots,v_n\in V$, then the map in \Cref{eq:pi-explicit} yields
\[
\pi_{v_1+\lambda_1 e,\ldots,v_n+\lambda_n e}(\cB)
=\pi_{v_1,\ldots,v_n}(\cB)+(\lambda_1,\ldots,\lambda_n).
\]
The translation is unnecessary if we use the conic duality 
\begin{equation}\label{eq:cone-duality}
\pi(\cB)
=\pi_2\circ\hyper_{b_0}(C^\vee)
\end{equation}
of \Cref{prop:pro-cone}, which is valid even if $0\in\pi(\cB)$ fails.
\par
The main reason why we employ the conic duality of \Cref{eq:cone-duality} is that 
it conveys the algebraic geometry of the dual hyperbolicity cone $C^\vee$ described 
in \Cref{sec:duality-cones} to the convex set $\pi(\cB)$ directly. The algebraic 
geometry of the hyperbolicity cone $C$ is hidden behind a convex duality if we use 
the \Cref{eq:conv-duality} to describe the set $\pi(\cB)$.
\end{Rem}
\begin{figure}[ht!]
a) % Kegel
\begin{tikzpicture}[scale=1]
\fill[color=gray!50] (-1.2,2.4) -- (0,0) -- (2.4,-0.8) -- (2.4,2.4);
\draw (-1.2,2.4) -- (0,0) -- (2.4,-0.8);
\draw[very thick] (-0.5,1) -- (2.1,1);
\draw[dotted, very thick] (2.1,1) -- (2.8,1);
\draw[very thick] (-0.5,0) -- (2.1,0);
\draw[dotted, very thick] (2.1,0) -- (2.4,0);
\draw[->] (-1.3,0) -- (2.6,0) node[right] {$x_1$};
\foreach \x/\xtext in {-1/-1, -.5/{-\tfrac{1}{2}}, 1/1, 2/2} 
\draw[shift={(\x,0)}] (0pt,2pt) -- (0pt,-2pt) node[below] {\scriptsize{$\xtext$}};
\draw[->] (0,-.9) -- (0,2.6) node[above] {$x_0$};
\foreach \y/\ytext in {1/1, 2/2} 
\draw[shift={(0,\y)}] (2pt,0pt) -- (-2pt,0pt) node[left] {\scriptsize{$\ytext$}};
\draw (0,0) node[above right] {\scriptsize{$0$}};
\draw (2.4,2.4) node[below left] {$C$};
\draw (1,1) node[above] {$\hyper_{b_0^\ast}(C)$};
\end{tikzpicture}
\hspace{3em}
b) % dualer Kegel
\begin{tikzpicture}[scale=1]
\fill[color=gray!50] (.8,2.4) -- (0,0) -- (2.4,1.2) -- (2.4,2.4);
\draw (.8,2.4) -- (0,0) -- (2.4,1.2);
\draw[very thick] (.333,1) -- (2,1);
\draw[very thick] (.333,0) -- (2,0);
\draw[->] (-.5,0) -- (2.6,0) node[right] {$y_1$};
\foreach \x/\xtext in {.333/\tfrac{1}{3}, 1/1, 2/2} 
\draw[shift={(\x,0)}] (0pt,2pt) -- (0pt,-2pt) node[below] {\scriptsize{$\xtext$}};
\draw[->] (0,-.9) -- (0,2.6) node[above] {$y_0$};
\foreach \y/\ytext in {1/1, 2/2} 
\draw[shift={(0,\y)}] (2pt,0pt) -- (-2pt,0pt) node[left] {\scriptsize{$\ytext$}};
\draw (0,0) node[below left] {\scriptsize{$0$}};
\draw (2.4,2.4) node[below left] {$C^\vee$};
\draw (1.3,1) node[above] {$\hyper_{b_0}(C^\vee)$};
\end{tikzpicture}
\caption{%
a) The projection of the affine slice $\hyper_{b_0^\ast}(C)$ of the convex cone $C$ 
to the $x_1$-axis is the interval $[-\tfrac{1}{2},\infty)=[\tfrac{1}{3},2]^\circ$.
b) The projection of the base $\hyper_{b_0}(C^\vee)$ of the dual convex cone 
$C^\vee$ to the $y_1$-axis is the interval 
$\pi(\cB)=[\tfrac{1}{3},2]\neq[-\tfrac{1}{2},\infty)^\circ$.}
\label{fig:AstarnotW}
\end{figure}
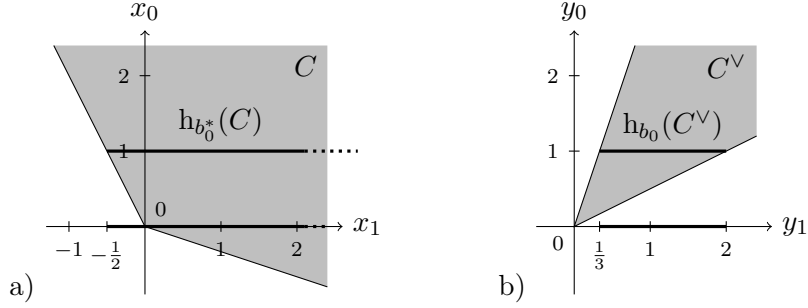
\begin{Exm}\label{ex:AstarnotW}
Let $\cK=\{(\xi_1,\xi_2)^T\in\R^2:\xi_1,\xi_2\geq 0\}$ be the nonnegative quadrant 
of the plane $V=\R^2$ and let $e=(1,1)^T\in V$. The segment 
\[
\cB
=\hyper_e(\cK^\vee)
=\big[(0,1),(1,0)\big]
\]
is a base of the dual convex cone $\cK^\vee=\{z^T:z\in\cK\}$. Let 
$v=(2,\tfrac{1}{3})^T\in V$. The image of $\cB$ under the map $\pi:V^\ast\to\R^\ast$, 
$z\mapsto z(v)$ is the interval $[\tfrac{1}{3},2]$, which does not contain the origin, 
$0\not\in\pi(\cB)$. We visualize \Cref{cor:convex-duality}, \Cref{eq:cone-duality}, 
and the failure of \Cref{eq:conv-duality} in \Cref{fig:AstarnotW}.
\end{Exm}
We formulate the results from \Cref{sec:duality-cones} in terms of bases of cones,
taking the convex cone $\cK\subset V$ in \Cref{def:construction} equal to a 
hyperbolicity cone. 
\par
\begin{Thm}\label{thm:piB}
Let $p$ be a hyperbolic polynomial on $V$ with hyperbolicity direction $e\in V$ 
and hyperbolicity cone $C_e(p)\subset V$. The image of the compact convex base 
$\hyper_e(C_e(p)^\vee)$ of the dual convex cone $C_e(p)^\vee$ under the linear map $\pi$ is 
\[
\pi\Big(\hyper_e\big(C_e(p)^\vee\big)\Big)
=\cv\big(\cl(T_1\cup T_2\cup\dots\cup T_r)\big).
\]
Here, $p_1^{m_1}p_2^{m_2}\dots p_r^{m_r}$ is a factorization of the 
pullback $p\circ\phi_0\in\R[x_0,x_1,\dots,x_n]$ into irreducible polynomials, and
\[
T_i = \{ (y_1,\dots,y_n) \in (\R^n)^\ast \mid
(1:y_1:\dots:y_n)\in (\cV(p_i)^\ast)_\reg \}, 
\quad i=1,\dots,r.
\]
\end{Thm}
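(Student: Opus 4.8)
The plan is to identify the linear section $C=\phi_0^{-1}(\cK)$, where $\cK=C_e(p)$, as itself a hyperbolicity cone, and then to transport the description of dual hyperbolicity cones from \Cref{sec:duality-cones} through the maps $\pi_0$ and $\pi_2$ of \Cref{def:construction}. The first step is to observe that the pullback $q=p\circ\phi_0\in\R[x_0,x_1,\dots,x_n]$ is hyperbolic with respect to $b_0$. Dualizing the defining equation $\pi_0(\ell)=(\ell(e),\pi(\ell))$ gives $\phi_0(x_0,x)=x_0e+\phi(x)$, in particular $\phi_0(b_0)=e$; hence $q(b_0)=p(e)>0$, and $q(tb_0-a)=p\bigl(te-\phi_0(a)\bigr)$ has only real roots for every $a\in\R^{n+1}$ because $p$ is hyperbolic with respect to $e$ (this uses nothing about injectivity of $\phi_0$). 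Reading off the signs of these roots yields $C_{b_0}(q)=\phi_0^{-1}(C_e(p))=C$. Since $\cV(q)=\cV(p_1)\cup\dots\cup\cV(p_r)$ is the decomposition into irreducible components, \Cref{cor:dual-hyp-cone} applied to $q$ and $b_0$ gives $C^\vee=\co\bigl(\cl(S_1\cup\dots\cup S_r)\bigr)$ with $S_i=(X_i^\ast)_\reg(\R)\cap H_{b_0,+}$, where $X_i=\cV(p_i)\subset\C^{n+1}$ and $H_{b_0,+}=\{(y_0,y)\colon y_0\geq 0\}$.

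Next I would pass to bases. Each $S_i$ is contained in $C_{b_0}(p_i)^\vee\subset C^\vee$ by \Cref{thm:dual-hyp-cone}, so \Cref{lem:cn-ccc} applies with $K=C_{b_0}(q)=C$, interior point $b_0$, and cone $S_1\cup\dots\cup S_r\subset C^\vee$; it yields
\[
\hyper_{b_0}(C^\vee)=\cv\Bigl(\cl\bigl(\hyper_{b_0}(S_1)\cup\dots\cup\hyper_{b_0}(S_r)\bigr)\Bigr),
\]
using that $\hyper_{b_0}$ of a union is the union of the slices. By \Cref{prop:pro-cone} we have $\pi(\cB)=\pi_2\bigl(\hyper_{b_0}(C^\vee)\bigr)$ with $\cB=\hyper_e(C_e(p)^\vee)$. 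Since $\pi_2$ restricts to an isometry from the affine hyperplane $\{y_0=1\}$ — which contains all the slices $\hyper_{b_0}(\cdot)$ — onto $(\R^n)^\ast$, it commutes with convex hull and with Euclidean closure, so
\[
\pi(\cB)=\cv\Bigl(\cl\bigl(\pi_2(\hyper_{b_0}(S_1))\cup\dots\cup\pi_2(\hyper_{b_0}(S_r))\bigr)\Bigr).
\]

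It then remains to identify $\pi_2(\hyper_{b_0}(S_i))=T_i$. On the slice $\{y_0=1\}$ the half-space condition $y_0\geq 0$ is vacuous, so $\hyper_{b_0}(S_i)=\{(1,y_1,\dots,y_n)\in(X_i^\ast)_\reg(\R)\}$; since a nonzero point of the affine cone $X_i^\ast$ over the projective dual variety $\cV(p_i)^\ast\subset(\P^n)^\ast$ is a regular point of $X_i^\ast$ exactly when its class lies in $(\cV(p_i)^\ast)_\reg$, applying $\pi_2$ gives $\{(y_1,\dots,y_n)\colon(1:y_1:\dots:y_n)\in(\cV(p_i)^\ast)_\reg\}=T_i$. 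Combining the two displays finishes the proof.

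I expect the argument to be largely bookkeeping once the pullback $q=p\circ\phi_0$ is recognized as a hyperbolic polynomial whose hyperbolicity cone with respect to $b_0$ is the section $C$. The points that demand the most care are checking that $\pi_2$ — equivalently, passage to the affine chart $\{y_0=1\}$ — genuinely commutes with the Euclidean closure of the semi-algebraic set $\bigcup_i\hyper_{b_0}(S_i)$, and reconciling the two meanings of ``regular locus'': the smooth locus of the affine cone $X_i^\ast\subset(\C^{n+1})^\ast$ appearing via \Cref{cor:dual-hyp-cone} versus the smooth locus of the projective dual variety $\cV(p_i)^\ast\subset(\P^n)^\ast$ used in the definition of $T_i$. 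A secondary point is to verify the hypothesis $S_1\cup\dots\cup S_r\subset C^\vee$ of \Cref{lem:cn-ccc}, which follows from \Cref{thm:dual-hyp-cone} applied to the irreducible factors $p_i$.
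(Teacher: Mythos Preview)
Your proof is correct and follows essentially the same route as the paper: recognize $q=p\circ\phi_0$ as hyperbolic with respect to $b_0$ with hyperbolicity cone $C=\phi_0^{-1}(C_e(p))$, describe $C^\vee$ via the results of \Cref{sec:duality-cones}, and then transport through $\pi_2\circ\hyper_{b_0}$ using \Cref{prop:pro-cone}. The only cosmetic difference is that you invoke \Cref{cor:dual-hyp-cone} followed by \Cref{lem:cn-ccc}, whereas the paper applies \Cref{thm:dual-hyp-cone} to each irreducible factor and then combines via \Cref{eq:inter-dual-cl} in \Cref{pro:dual-of-intersection}; these amount to the same computation, and your closing remarks on the affine-versus-projective regular locus and on the commutation of $\pi_2$ with closure are exactly the bookkeeping the paper handles implicitly.
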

\begin{proof}
The pullback $p\circ\phi_0$ is a hyperbolic polynomial on $\R\oplus\R^n$ with 
hyperbolicity direction $b_0$. As 
$\phi_0((x_0,x)^T)=x_0e+\phi(x)$ holds for all points $(x_0,x)^T\in\R\oplus\R^n$, 
we have $p\circ\phi_0(b_0)=p(e)>0$. The equation
\begin{equation}\label{eq:pull-back}
p\circ\phi_0(tb_0-a)
=p(te-\phi_0(a))
\end{equation}
shows that the polynomial $p\circ\phi_0(tb_0-a)$ in one variable $t$ has only real 
roots for every point $a\in\R\oplus\R^n$ so that $p\circ\phi_0$ is indeed hyperbolic 
with respect to the point $b_0$. \Cref{eq:pull-back} also shows 
\[
C_{b_0}(p\circ\phi_0)
=\phi_0^{-1}\left(C_e(p)\right).
\]
Using the hyperbolicity cone $\cK=C_e(p)$ in \Cref{prop:pro-cone}, we get
\[
\pi\Big(\hyper_e\big(C_e(p)^\vee\big)\Big)
=\pi_2\circ\hyper_{b_0}\big(C_{b_0}(p\circ\phi_0)^\vee\big).
\]
\par
If $p_1^{m_1}p_2^{m_2}\dots p_r^{m_r}=p\circ\phi_0$ is a factorization into 
irreducible polynomials, then
\[
C_{b_0}(p\circ\phi_0)
=C_{b_0}(p_1)\cap C_{b_0}(p_2)\cap\dots\cap C_{b_0}(p_r)
\]
holds. \Cref{thm:dual-hyp-cone} proves $C_{b_0}(p_i)^\vee=\cl(\co(C_i))$, 
$i=1,\dots,r$, where
\[
C_i
=\big\{ (y_0,y_1,\dots,y_n) \in (\R\oplus\R^n)^\ast \mid
(y_0:y_1:\dots:y_n)\in (\cV(p_i)^\ast)_\reg, y_0\geq 0 \big\}.
\]
Now \Cref{eq:inter-dual-cl} in \Cref{pro:dual-of-intersection} yields
\[
\hyper_{b_0}\Big(C_{b_0}(p\circ\phi_0)^\vee\Big)
= \cv\Big(\cl\big(\hyper_{b_0}(C_1)\cup\hyper_{b_0}(C_2)
\cup\dots\cup\hyper_{b_0}(C_r)\big)\Big).
\]
The claim follows since the map $\pi_2:(\R\oplus\R^n)^\ast\to(\R^n)^\ast$ restricts 
to an affine isomorphism from $\hyper_{b_0}((\R\oplus\R^n)^\ast)$ onto $(\R^n)^\ast$, 
and since $\pi_2\circ\hyper_{b_0}(C_i)=T_i$ holds for $i=1,\dots,r$.
\end{proof}
Like in \Cref{cor:dual-hyp-cone-bound}, some of the factors of the pullback
$p\circ\phi_0=p_1^{m_1}p_2^{m_2}\dots p_r^{m_r}$ are redundant in \Cref{thm:piB}.
Let $I\subset\{1,2,\dots,r\}$ be the subset such that $i\in I$ if and only if
the hypersurface $\cV(p_i)$ belongs to the algebraic boundary
$\partial_a C_{b_0}(p\circ\phi_0)$ of the hyperbolicity cone 
$C_{b_0}(p\circ\phi_0)$ for $i=1,\dots,r$. Then 
$\pi(\cB)=\cv\Big(\cl\big(\bigcup_{i\in I}T_i\big)\Big)$.
\par
%
%%%%%%%%%%%%%%%%%%%%%%%%%%%%%%%%%%%%%%%%%%%%%%%%%%%%%%%%%%%%%%%%%%%%%%%%%%%%
%%%%%%%%%%%%%%%%%%%%%%%%%%%%%%%%%%%%%%%%%%%%%%%%%%%%%%%%%%%%%%%%%%%%%%%%%%%%
%%%%%%%%%%%%%%%%%%%%%%%%%%%%%%%%%%%%%%%%%%%%%%%%%%%%%%%%%%%%%%%%%%%%%%%%%%%%
%%%%%%%%%%%%%%%%%%%%%%%%%%%%%%%%%%%%%%%%%%%%%%%%%%%%%%%%%%%%%%%%%%%%%%%%%%%%
%%%%%%%%%%%%%%%%%%%%%%%%%%%%%%%%%%%%%%%%%%%%%%%%%%%%%%%%%%%%%%%%%%%%%%%%%%%%
%
\section{The Cone of Positive-Semidefinite Matrices}
\label{sec:PSD-cone}
Here we prove the theorems stated in the introduction by applying our results 
to the hyperbolicity cone of positive semi-definite matrices. We discuss
block diagonalization \emph{versus} factorization of the determinant. We present
examples, among them the promised example by Chien and Nakazato.
\par
Let $V=H_d$ be the space of Hermitian $d\times d$ matrices. The determinant is a 
hyperbolic polynomial on $V$ with respect to any positive definite matrix. The 
hyperbolicity cone is the set of positive semi-definite matrices 
\[
\cK
=C_\id(\det)
=\{A\in H_d:A\succeq 0\}.
\]
This convex cone is a self-dual convex cone, i.e.~$\cK^\vee = \cK$, as we identify 
the dual space $V^\ast$ with $V$ using the scalar product 
$\langle A,B\rangle=\tr(AB)$, $A,B\in H_d$. In the notation of 
\Cref{def:construction}, the base $\hyper_\id(\cK^\vee)$ of $\cK^\vee$ is the 
set of density matrices introduced in \Cref{eq:density-matrices},
\[
\cB
=\hyper_\id(\cK^\vee)
=\{\rho\in H_d \colon \rho\succeq 0, \tr(\rho)=1\}.
\]
Let $A_1,\dots,A_n\in H_d$ be Hermitian matrices 
and let $\pi:H_d\to(\R^n)^\ast$ be the linear map defined 
by $\pi(A)=\langle A,A_i\rangle_{i=1}^n$ for all $A\in H_d$. The image of the set 
$\cB$ under $\pi$ is the joint numerical range defined in \Cref{eq:jnr},
\[
W=\pi(\cB)=\{\langle \rho,A_i\rangle_{i=1}^n:\rho\in\cB\}.
\]
The dual map $\phi=\pi^\ast$ has the values $\phi(x)=x_1 A_1+\cdots+x_n A_n$ and the 
map $\phi_0:\R\oplus\R^n\to V$, see \Cref{def:construction}, has the values 
$\phi_0[(x_0,x)^T]=x_0\id+\phi(x)$ for all $x_0\in\R$ and $x=(x_1,\dots,x_n)^T\in\R^n$. 
Hence, the pullback $\det\circ\phi_0$ takes the form
\begin{equation}\label{eq:p-detphi}
p
=\det\circ\phi_0
=\det(x_0\id+x_1 A_1+\cdots+x_n A_n)
\in\R[x_0,x_1,\dots,x_n].
\end{equation}
With this preparation, \Cref{eq:modified-K} follows from \Cref{thm:piB} immediately.
To obtain Kippenhahn's original result, as stated in \Cref{thm:k}, we use 
\Cref{cor:dim3} instead of \Cref{cor:dual-hyp-cone} in the proof of \Cref{thm:piB}.
\par
\begin{Rem}
In the matrix setting of this section, the convex cone $\phi_0^{-1}(\cK)$ is the 
\emph{spectrahedral cone} \cite{Netzer2012}
\[
\{x\in\R^{n+1} : x_0\id+x_1 A_1+\cdots+x_n A_n \succeq 0 \},
\]
which is the hyperbolicity cone $C_{b_0}(p)$ of the polynomial in \Cref{eq:p-detphi}, 
as we observed more generally in the proof of \Cref{thm:piB} above.
\end{Rem}
Researchers have studied the relationships between the possibility to block 
diagonalize the Hermitian matrices $A_1,\ldots,A_n\in H_d$ simultaneously and to
factor the determinant $p$ in \Cref{eq:p-detphi}. The matrices $A_1,\ldots,A_n$ 
are called {\em unitarily reducible} if there is a $d$-by-$d$ unitary $U$ and an 
integer $0<j<d$ such that $UA_iU^\ast=A_{i,1}\oplus A_{i,2}$ is a block diagonal 
matrix, where $A_{i,1}\in H_j$ and $A_{i,2}\in H_{d-j}$ for all $i=1,\dots,n$. 
Otherwise the matrices $A_1,\ldots,A_n$ are {\em unitarily irreducible}. 
\par
Clearly, the polynomial $p\in\R[x_0,x_1,x_2]$ is the product of the polynomials 
corresponding to the two diagonal blocks for every pair of unitarily reducible 
matrices $A_1,A_2\in H_d$. Interestingly, $p$ may even factor when  the matrices 
$A_1,A_2$ are unitarily irreducible.
\par
\begin{Rem}[Kippenhahn's conjecture]\label{ex:Laffey}
Kippenhahn \cite[Prop.~28a]{Kippenhahn1951} conjectured that if $p$ has a repeated 
factor, then the matrices $A_1,A_2$ are unitarily reducible. Laffey \cite{Laffey1983} 
found a first counterexample to the conjecture by presenting a pair of unitarily 
irreducible hermitian $8$-by-$8$ matrices $A_1,A_2\in H_8$, where $p$ is the square 
of a polynomial of degree four. The paper \cite{Buckley2016} summarizes more recent
work on the topic and shows that every pair of hermitian $6$-by-$6$ matrices 
$A_1,A_2\in H_6$ is unitarily reducible if $p$ is the square of a polynomial of 
degree three that defines a smooth cubic.
\end{Rem}
\par
Similarly, one may ask whether the polynomial $p\in\R[x_0,x_1,x_2]$ can be a product 
of several irreducible factors of multiplicity one if the hermitian matrices 
$A_1,A_2\in H_d$ are unitarily irreducible. Such questions have been studied in detail by Kerner and Vinnikov (\cite{Kerner-Vinnikov2011,Kerner-Vinnikov2012}), resulting in a number of necessary and sufficient conditions. For example, the union of three lines in the projective plane admits a unitarily irreducible determinantal representation (see \cite[Remark 3.6]{Kerner-Vinnikov2011}).

\medskip
Here is a simple example demonstrating that the dual varieties to the irreducible 
components of the hypersurface $\cV(p)$ may have different dimensions.
\par
\begin{Exm}\label{ex:drop}
Let
\[
A_1=\left(
\begin{array}{ccc} 
    0 &    1 &    0 \\
    1 &    0 &    0 \\
    0 &    0 &    2
\end{array}\right),
\qquad
A_2=\left(
\begin{array}{ccc} 
    0 & -\ii &    0 \\
  \ii &    0 &    0 \\
    0 &    0 &    0
\end{array}\right),
\qquad
A_3=\left(
\begin{array}{ccc} 
    1 &    0 &    0 \\
    0 &   -1 &    0 \\
    0 &    0 &    0
\end{array}\right).
\]
The irreducible components of $\cV(p)$ are 
$X_1=\{x\in\P^3:x_1^2+x_2^2+x_3^2=x_0^2\}$ and $X_2=\{x\in\P^3:x_0+2 x_1=0\}$. 
The dual varieties are $X_1^\ast=\{y\in(\P^3)^\ast:y_1^2+y_2^2+y_3^2=y_0^2\}$,
by \Cref{exm:quadric}, and $X_2^\ast=\{(1:2:0:0)\}$. Both $X_1^\ast$ and 
$X_2^\ast$ are smooth, so $T_1=\{y\in(\R^3)^\ast:y_1^2+y_2^2+y_3^2=1\}$ and 
$T_2=\{(2,0,0)\}$. The joint numerical range $W=\cv(T_1\cup T_2)$ is the convex 
hull of a sphere and a point outside the sphere.
\end{Exm}
\begin{figure}[ht!]%
a)\includegraphics[height=3.6cm]{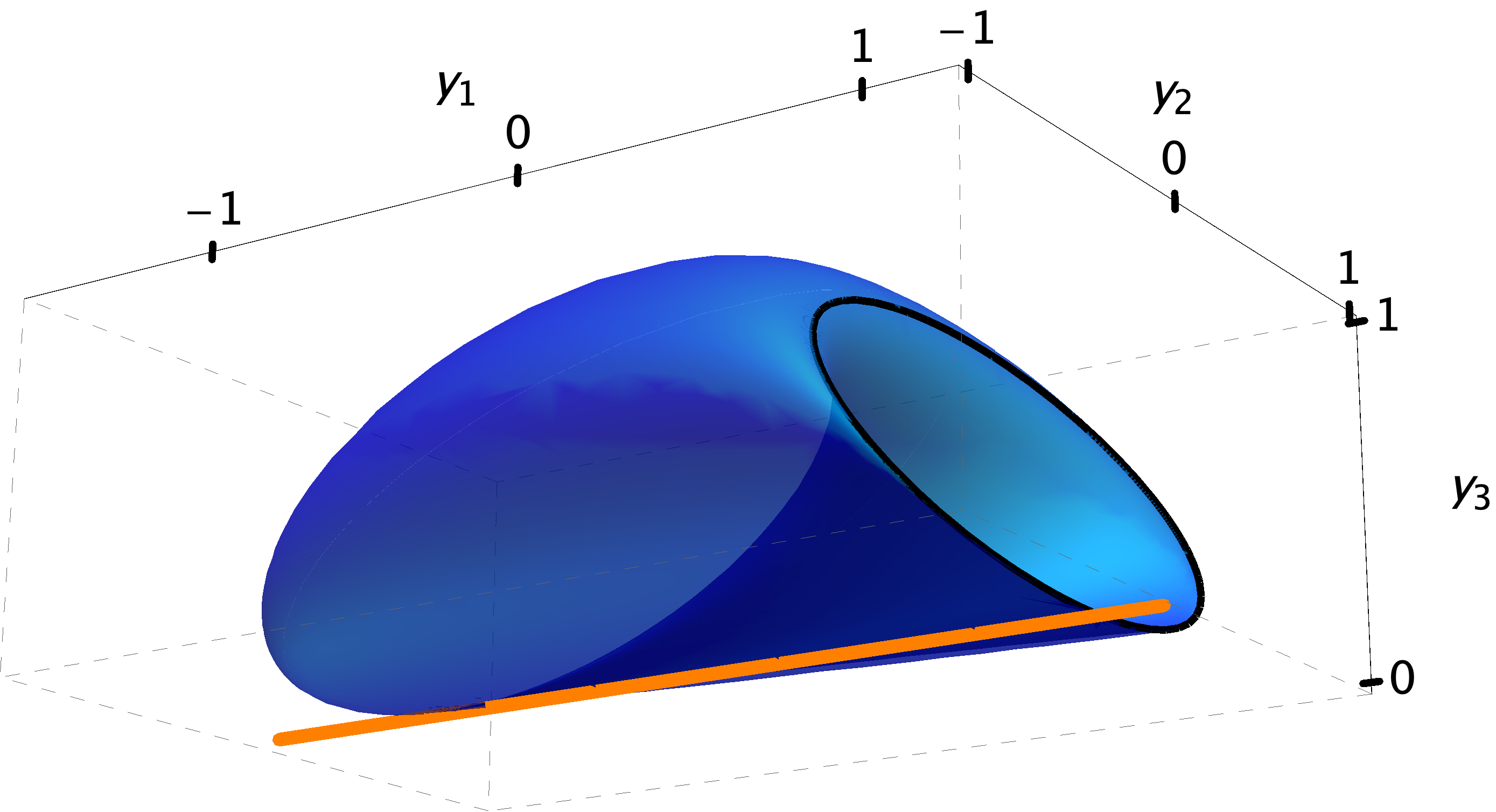}\\%
b)\includegraphics[height=3.6cm]{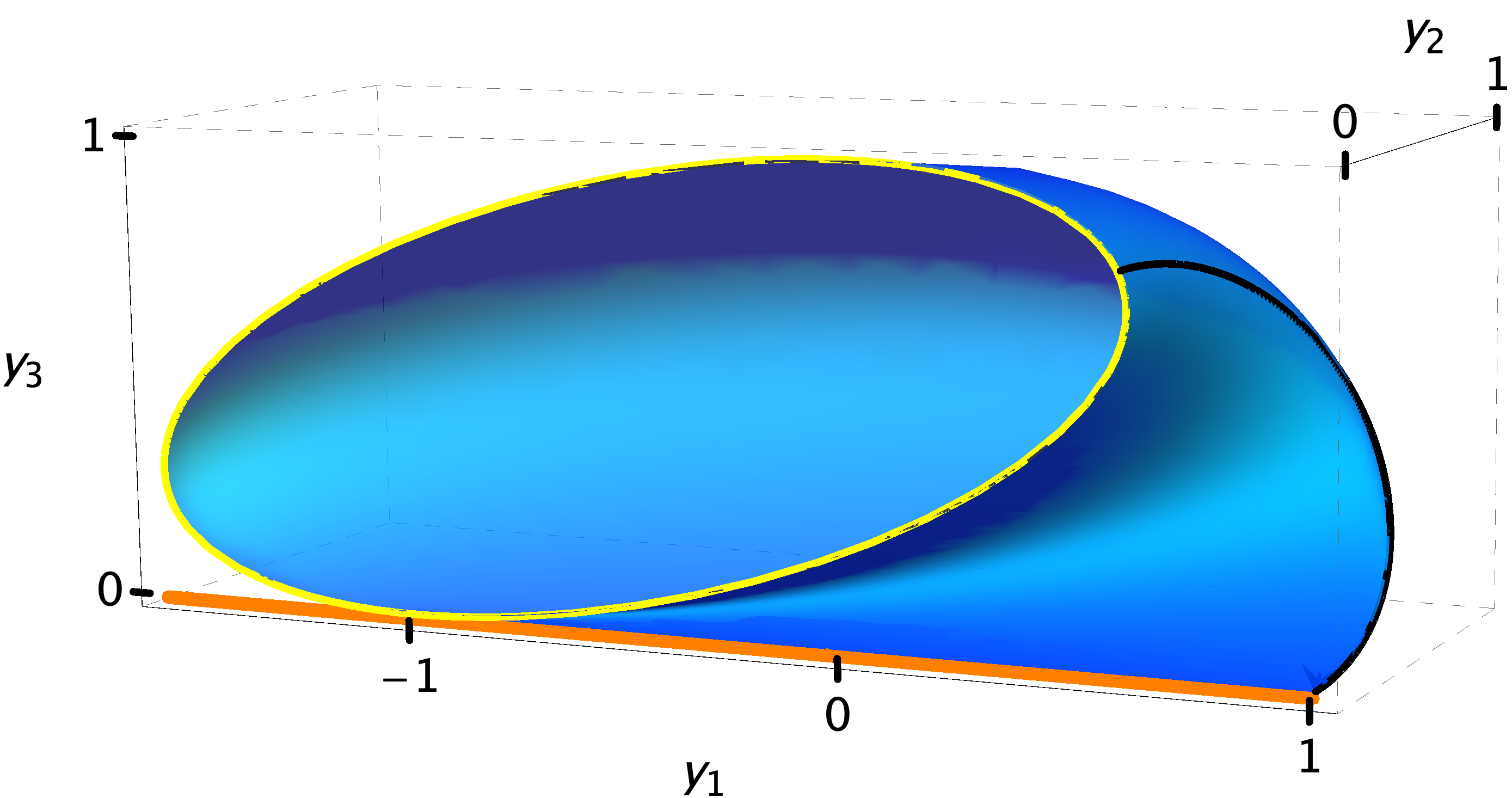}%
\caption{%
Pictures for \Cref{ex:ChienNakazato}: a) 
Surface $T$ (blue), 
singular locus $y_1$-axis (orange), 
and ellipse on the boundary of the joint numerical range (black). 
b) 
The hyperplane $y_2=0$ intersects $T$ in the union of an ellipse (yellow) 
and the $y_1$-axis (orange).}
\label{fig:ChienNakazato}
\end{figure}
In our last example, we discuss the original example given by Chien and Nakazato 
from the point of view of real algebraic geometry in detail. The example 
explains why singular points of the dual varieties $\cV(p_i)^\ast$ have to be 
excluded from the statement of \Cref{thm:piB}. 
\par
\begin{Exm}[Chien and Nakazato \cite{ChienNakazato2010}]\label{ex:ChienNakazato}
Let
\[
A_1=\left(
\begin{array}{ccc} 
    1 &    0 &    0 \\
    0 &   -1 &    1 \\
    0 &    1 &    0
\end{array}\right),
\qquad
A_2=\left(
\begin{array}{ccc} 
    0 &    0 & -\ii \\
    0 &    0 &    0 \\
  \ii &    0 &    0
\end{array}\right),
\qquad
A_3=\left(
\begin{array}{ccc} 
    0 &    0 &    0 \\
    0 &    0 &    0 \\
    0 &    0 &    1
\end{array}\right).
\]
The hyperbolic cubic form
\begin{align*}
p
 &= \det( x_0 \id + x_1 A_1 + x_2 A_2 + x_3 A_3 )\\
 &= x_0^3 + x_0^2x_3 - 2x_0x_1^2 - x_0x_2^2 - x_1^3 - x_1^2x_3 + x_1x_2^2
\end{align*}
is irreducible. The dual variety to $X=\{x\in\P^3: p(x)=0\}$ is the hypersurface 
$X^\ast=\{y\in(\P^3)^\ast: q(y)=0\}$ defined by the homogeneous quartic form
\begin{align*}
q
= & \, 4 y_0^2 y_3^2
+ 8 y_0 y_1 y_3^2
- 4 y_0 y_2^2 y_3
- 24 y_0 y_3^3
+ 4 y_1^2 y_3^2
- 4 y_1 y_2^2 y_3
- 8 y_1 y_3^3\\
& \, + y_2^4
+ 8 y_2^2 y_3^2
+ 20 y_3^4.
\end{align*}
It is easy to show that the singular locus $X^\ast\setminus(X^\ast)_\reg$ of $X^*$ 
is the line $\{(y_0,y_1,y_2,y_3)\in(\P^3)^\ast: y_2=y_3=0\}$. 
\par
The surface $T=\{(y_1,y_2,y_3)\in(\R^3)^\ast \mid (1:y_1:y_2:y_3)\in X^\ast\}$, 
depicted in \Cref{fig:ChienNakazato}, is the zero-set of the polynomial 
$Q(y_1,y_2,y_3)=q(1,y_1,y_2,y_3)$. All points $(y_1,0,0)\in T$ on the line of 
singular points $y_2=y_3=0$ in the interval $|y_1|\leq 1$ are central points of 
$X^\ast$. This can be seen from a parametrization of $X$ described in 
\cite{SchwonnekWerner2018}. 
In addition, the points $(y_1,0,0)\in T$ with $|y_1|>1$ are not central. This can
be seen from the roots of the quadratic polynomial 
$R_{y_1,y_3}\in\R[y_2]$ satisfying $R_{y_1,y_3}(y_2^2)=Q(y_1,y_2,y_3)$. In fact,
if $(y_1,y_3)\in(\R^2)^\ast$, then the line 
$\{(y_1,\lambda,y_3) \colon \lambda\in\R\}$ intersects $T$ if and only if 
$R_{y_1,y_3}$ has a non-negative root $y_2$. It is not hard to see (for example 
using Sturm's theorem \cite{BCR}) that, under the assumption $y_3\neq 0$, the 
polynomial $R_{y_1,y_3}$ has a non-negative root if and only if $(y_1,y_3)$ lies 
in the convex hull of the union of the singleton $\{(1,0,0)\}$ with the ellipse 
given by $y_1^2 + 5 y_3^2 - 2 y_1 y_3 + 2 y_1 - 6 y_3  + 1 = y_2 = 0$, the yellow 
curve in \Cref{fig:ChienNakazato}~b). Therefore, the points $(y_1,0,0)$ with 
$|y_1|>1$ are not central. These are the points which are removed from $X^\ast(\R)$ 
in the statement of \Cref{eq:modified-K}. The convex hull of the remainder 
of $T$ is the joint numerical range $W$.
\par
For a general, regular point $(y_0:y_1:y_2:y_3)$ on $X^*$, the dual hyperplane 
in $\P^3$ defined by $x_0y_0 + x_1y_1 + x_2y_2 + x_3y_3 = 0$ intersects $X$ 
in an irreducible but singular cubic. The hyperplane sections of 
the hypersurface $X$ corresponding to points of the line of singular points of 
$X^\ast$ are special in the following way: The sections are reducible curves in 
$\P^2$ that factor into a conic and a line, which is tangent to the conic at a 
real point. Not all of the points on this line are central points of $X^*$. The 
central points of $X^*$ on this line are exactly those that we can perturb such 
that the dual hyperplane section of $X$ deforms to a real cubic with a real 
singularity. The other points on this line can only be perturbed on $X^*$ to 
complex points, which means that the dual hyperplane section of $X$ only deforms 
to complex singular hyperplane sections that are complex cubics with complex 
singularities.
\par
To see this explicitly, we compute the restrictions of $p$ to hyperplanes in $\P^3$ 
defined by $x_0 y_0 + x_1 y_1 = 0$. Let us assume for simplicity for now that 
$y_0 \neq 0$ so that $x_0 = a x_1$ for some $a\in \R$. Then $p$ factors
\[
p(a x_1,x_1,x_2,x_3) = 
 x_1 \left( (a^3 - 2a - 1) x_1^2 + (1 - a) x_2^2 + (a^2 - 1) x_1 x_3 \right).
\]
So in the plane defined by $x_0y_0 + x_1y_1=0$, we see the conic 
\[
(a^3 - 2a - 1) x_1^2 + (1 - a) x_2^2 + (a^2 - 1) x_1 x_3 = 0
\] 
and the line defined by $x_0 = x_1 = 0$, which is tangent to the conic at the point 
$(0:0:0:1)$, which is a singular point of $X$. (The computation for the case $y_0 = 0$ 
is similar.) For every real value of the parameter $a$, the conic is real and 
indefinite of full rank except for $a=-1$ and $a=1$. These two values of $a$ bound 
the interval of central points on the line of singular points on $X^\ast$; see 
\Cref{fig:ChienNakazato}.
\end{Exm}
Section~6 of \cite{Szymanski-etal2018} presents further examples 
of surfaces analogous to the surface $T$ in \Cref{ex:ChienNakazato} that contain 
straight lines. These lines are identified in the paper 
\texttt{arXiv:1603.06569v3 [math.FA]} on the preprint server arXiv.
\par
So why do naive generalizations of Kippenhahn's Theorem fail in higher dimensions? 
The reason is that hyperplanes and lines are only the same in $\P^2$. A central 
point of the argument in the proof of Kippenhahn's Theorem is \Cref{lem:multiplicity}, 
which argues that there are no lines through the interior of the hyperbolicity cone 
that are tangent to the hyperbolic hypersurface (\Cref{Cor:multiplicity}). 
The hyperplane corresponding to a real point of the dual variety may well meet an 
interior point of the hyperbolicity cone as long as the hyperplane contains no tangent 
line to the hyperbolic hypersurface incident with this interior point. Our 
generalization holds essentially because this can only happen for hyperplanes that are 
not central points of the dual variety.
\par
\subsection*{Acknowledgements} 
We thank H.~Nakazato for helpful comments.
SW thanks K.~{\.Z}yczkowski for discussions of numerical ranges in quantum 
mechanics. 
We would like to thank the anonymous referees for their suggestions to improve 
the paper.
\par
%
%%%%%%%%%%%%%%%%%%%%%%%%%%%%%%%%%%%%%%%%%%%%%%%%%%%%%%%%%%%%%%%%%%%%%%%%%%%%
%%%%%%%%%%%%%%%%%%%%%%%%%%%%%%%%%%%%%%%%%%%%%%%%%%%%%%%%%%%%%%%%%%%%%%%%%%%%
%%%%%%%%%%%%%%%%%%%%%%%%%%%%%%%%%%%%%%%%%%%%%%%%%%%%%%%%%%%%%%%%%%%%%%%%%%%%
%%%%%%%%%%%%%%%%%%%%%%%%%%%%%%%%%%%%%%%%%%%%%%%%%%%%%%%%%%%%%%%%%%%%%%%%%%%%
%%%%%%%%%%%%%%%%%%%%%%%%%%%%%%%%%%%%%%%%%%%%%%%%%%%%%%%%%%%%%%%%%%%%%%%%%%%%
%
%
\bibliographystyle{plain}

%
%
%%%%%%%%%%%%%%%%%%%%%%%%%%%%%%%%%%%%%%%%%%%%%%%%%%%%%%%%%%%%%%%%%%%%%%%%%%%%
%%%%%%%%%%%%%%%%%%%%%%%%%%%%%%%%%%%%%%%%%%%%%%%%%%%%%%%%%%%%%%%%%%%%%%%%%%%%
%%%%%%%%%%%%%%%%%%%%%%%%%%%%%%%%%%%%%%%%%%%%%%%%%%%%%%%%%%%%%%%%%%%%%%%%%%%%
%%%%%%%%%%%%%%%%%%%%%%%%%%%%%%%%%%%%%%%%%%%%%%%%%%%%%%%%%%%%%%%%%%%%%%%%%%%%
%%%%%%%%%%%%%%%%%%%%%%%%%%%%%%%%%%%%%%%%%%%%%%%%%%%%%%%%%%%%%%%%%%%%%%%%%%%%
%
\vspace{\fill}
\parbox{8cm}{%
Daniel Plaumann\\
Technische Universit{\"a}t Dortmund\\
Fakult{\"a}t f{\"u}r Mathematik\\
Vogelpothsweg 87\\
44227 Dortmund\\
Germany\\
e-mail \texttt{Daniel.Plaumann@tu-dortmund.de}}
\vspace{\baselineskip}
\par\noindent
\parbox{8cm}{%
Rainer Sinn\\
Freie Universit{\"a}t Berlin\\
Fachbereich Mathematik und Informatik\\
Arnimallee 2\\
14195 Berlin\\
Germany\\
e-mail \texttt{rainer.sinn@fu-berlin.de}}
\vspace{\baselineskip}
\par\noindent
\parbox{10cm}{%
Stephan Weis\\
Centro de Matem{\'a}tica da Universidade de Coimbra\\
Apartado 3008\\
EC Santa Cruz\\
3001 - 501 Coimbra\\
Portugal\\
e-mail \texttt{maths@weis-stephan.de}}
\end{document}